\newtheorem{thm}{Theorem}
\newtheorem{fact}{Fact}
\newtheorem{lem}{Lemma}
\newtheorem{coro}{Corollary}
\newtheorem{prop}{Proposition}
\theoremstyle{definition}
\newtheorem{rem}{Remark}
\newtheorem{example}{Example}
\newcommand{\ts}{\hspace{0.5pt}}
\newcommand{\nts}{\hspace{-0.5pt}}
\newcommand{\AAA}{\mathbb{A}}
\newcommand{\RR}{\mathbb{R}\ts}
\newcommand{\ZZ}{\mathbb{Z}}
\newcommand{\TT}{\mathbb{T}}
\newcommand{\NN}{\mathbb{N}}
\newcommand{\XX}{\mathbb{X}\ts}
\newcommand{\YY}{\mathbb{Y}}
\newcommand{\cA}{\mathcal{A}}
\newcommand{\cB}{\mathcal{B}}
\newcommand{\cG}{\mathcal{G}}
\newcommand{\cH}{\mathcal{H}}
\newcommand{\cS}{\mathcal{S}}
\newcommand{\cR}{\mathcal{R}}
\newcommand{\cL}{\mathcal{L}}
\newcommand{\cO}{\mathcal{O}}
\newcommand{\Aut}{\mathrm{Aut}}
\newcommand{\card}{\mathrm{card}}
\newcommand{\ord}{\mathrm{ord}}
\newcommand{\dX}{\mathrm{d}^{}_{\XX}}
\newcommand{\id}{\mathrm{Id}}
\newcommand{\nix}{\, \underline{\! 0 \! } \,}
\newcommand{\lcm}{\mathrm{lcm}}
\newcommand{\GL}{\mathrm{GL}}
\newcommand{\exend}{\hfill $\Diamond$}
\newcommand{\bs}{\boldsymbol}
\begin{document}

\title{Reversing and extended symmetries of shift spaces}

\author{M.~Baake}
\address{Faculty of Mathematics, Universit\"{a}t Bielefeld, 
  Box 100131, 33501 Bielefeld, Germany}
\email{mbaake@math.uni-bielefeld.de}

\author{J.A.G.~Roberts}
\address{School of Mathematics and Statistics, UNSW, Sydney, 
  NSW 2052, Australia}
\email{jag.roberts@unsw.edu.au}

\author{R.~Yassawi}
\address{IRIF, Universit\'{e} Paris-Diderot --- Paris 7,
Case 7014, 75205 Paris Cedex 13, France}
\email{ryassawi@gmail.com}

\begin{abstract}
  The reversing symmetry group is considered in the setting of
  symbolic dynamics. While this group is generally too big to be
  analysed in detail, there are interesting cases with some form of
  rigidity where one can determine all symmetries and reversing
  symmetries explicitly. They include Sturmian shifts as well as
  classic examples such as the Thue--Morse system with various
  generalisations or the Rudin--Shapiro system.  We also look at
  generalisations of the reversing symmetry group to
  higher-dimensional shift spaces, then called the group of
  \emph{extended symmetries}. We develop their basic theory for
  faithful $\ZZ^{d}$-actions, and determine the extended symmetry
  group of the chair tiling shift, which can be described as a model
  set, and of Ledrappier's shift, which is an example of algebraic
  origin.
\end{abstract}

\maketitle

\section{Introduction}

The symmetries of a (topological) dynamical system always have been an
important tool to analyse the structure of the system; see
\cite{Gol,Sev,LR,OS} and references therein for background.  In
particular, we mention the structure of periodic or closed orbits or,
more generally, the problem to decide on topological conjugacy of two
dynamical systems.  For (concrete) dynamical systems defined by the
action of a homeomorphism $T$ on some (compact) topological space
$\XX$, it had been realised early on that a \emph{time reversal
  symmetry} has powerful consequences on the structure of the dynamics
as well; see \cite{Sev} and references therein.  This development was
motivated by the importance of time reversal in fundamental equations
of physics.

Much later, this approach was re-analysed and extended in a more
group-theoretic setting \cite{Lamb,G1}, via the introduction of the
(topological) symmetry group $\cS (\XX)$ {of $(\XX,T)$} and its
extension to the reversing symmetry group $\cR (\XX)$.  The latter,
which is the group of all self-conjugacies and \emph{flip} conjugacies
of $(\XX,T)$, need not be mediated by an involution as in the
original setting; see \cite{BR-nonlin,BR-poly,LPS} for examples and
\cite{LR,BR-Bulletin,OS} for some general and systematic results.
Previous studies have often been restricted to concrete systems such
as trace maps \cite{RB}, toral automorphisms \cite{BR-nonlin} or
polynomial automorphisms of the plane \cite{BR-poly}; see \cite{OS}
for a comprehensive overview from an algebraic perspective.

It seems natural to also consider invertible \emph{shift spaces} from
symbolic dynamics \cite{Hed}, where shifts of finite type (also known
as topological Markov shifts) have been studied quite extensively; see
\cite{LPS,KLP} and references given there as well as
\cite[Ch.~3]{Kit-Book}. However, the symmetry group of a shift space can
be huge, with arbitrary finite groups and even free groups appearing
as subgroups; compare \cite[Thm.~3.3.18]{Kit-Book} and
\cite[p.~437]{LM}.  In particular, they need not be amenable, which
turns a classification into a wild problem.  Nevertheless, the
existence of reversing symmetries can be (and has been) studied, with
interesting consequences for the isomorphism problem \cite{GLR} or the
nature of dynamical zeta functions \cite{KLP}. One example for the
relevance of the reversing symmetry group of a minimal shift $(\XX,S)$
is that it yields the automorphism group of the \emph{topological full
  group} of $(\XX,S)$; compare \cite[Thm.~4.2 and
Cor.~4.4]{GPS}. These topological full groups have recently been shown
to be finitely generated, simple, infinite and amenable \cite{JM}.

The situation changes significantly in the presence of algebraic,
geometric or topological constraints that lead to symmetry rigidity,
as present in Sturmian sequences \cite{Olli} or certain ergodic
substitution dynamical systems \cite{Coven,CQY}, where the symmetry
group is minimal in the sense that it just consists of the group
generated by the shift itself. More generally, we speak of
\emph{symmetry rigidity} when the symmetry group coincides with the
group generated by the shift or is just a finite-index extension of
it. The relevance of this scenario was recently realised and
investigated by several groups \cite{CK0,CK,DDMP,CQY}. This led to
interesting deterministic cases (hence with zero topological entropy),
where one can now also determine $\cR (\XX)$. As we shall see,
symmetry rigidity is neither restricted to minimal shifts, nor to
shifts with zero entropy. In general, the shift space $\XX$ (with its
Cantor structure) is important and complicated, while the shift map
itself is simple --- in contrast to many previously studied cases of
reversibility, where the space is simple, while the mapping requires
more attention. Typically, the latter situation leads to the former
via symbolic codings.

Several higher-dimensional systems are also known to display symmetry
rigidity, as can be seen for the chair tiling \cite{Olli} or extracted
from the general theory in \cite{KS,BS} for certain dynamical systems
of algebraic origin.  In particular, there are examples from the
latter class that are not minimal.  Here, due to the action of
$\ZZ^{d}$ with $d>1$, one first needs to generalise the concept of a
reversing symmetry group. Since one now has more than one shift
generator, it seems more natural to speak of \emph{extended
  symmetries} rather than reversing symmetries. Extended symmetries
are self-homeomorphisms which give an orbit equivalence with constant
orbit cocycles. Unlike in the one-dimensional case, there are now
\emph{infinitely many} possibilities for how a homeomorphism of a
shift can shuffle orbits. This relates to $\GL(d,\ZZ)$ being an
infinite group when $d\geqslant 2$.  This setting will also be
considered, where we develop some basic theory. In view of
\cite{K-book}, it is not surprising that various new phenomena become
possible for $\ZZ^{d}$-shifts. In Theorems~\ref{thm:chair} and
\ref{thm:L}, we show that the extended symmetry groups of two classic
higher-dimensional shifts, namely the chair tiling and Ledrappier's
shift, are rather limited, reflecting the rigidity that already exists
in the case of their symmetry groups. For our proofs, we use the
techniques developed earlier in the paper, but we also need new
approaches to rule out all but finitely many possible conjugation
actions.\smallskip

The article is organised as follows. First, we discuss possible
structures of reversing and extended symmetry groups of shift spaces
through a variety of tools, which are illustrated with a substantial
selection of examples. This way, we put some emphasis on how to
actually determine the reversing or extended symmetries while, at the
same time, preparing the way for a more systematic treatment in the
future. To this end, we first recall some basic notions and tools for
the standard situation (Section~\ref{sec:prelim}), which is followed
by results on shifts in one dimension in Sections~\ref{sec:binary} and
\ref{sec:more}.  The examples are selected to cover the `usual
suspects' and to develop some intuition in parallel to the general
(and sometimes rather formal) tools. Next, we introduce and develop an
appropriate extension of the (reversing) symmetry group for
higher-dimensional shifts spaces (Section~\ref{sec:extend}). This also
establishes a connection with classic methods and results from
higher-dimensional crystallography, through the group $\GL(d,\ZZ)$ of
invertible integer matrices and their maximal finite subgroups. In
this context, we refer to \cite{TAO} for general background on notions
and results on tilings and model sets.  In Section~\ref{sec:plane}, we
apply our general results, to reach Theorems \ref{thm:chair} and
\ref{thm:L}, where we prove that two paradigmatic higher-dimensional
shifts exhibit an extended symmetry rigidity. Here, one is of
geometric and the other of algebraic origin.

\section{Setting and general tools}\label{sec:prelim}

The main concepts around reversibility are well known, and have
recently appeared in textbook form in \cite{OS}. In the context of
ergodic theory and symbolic dynamics, we also refer to \cite{LPS,GLR}
and references therein for related work and results.  In this section,
we consider symbolic dynamics in one dimension, with some emphasis on
\emph{deterministic} systems such as those defined by primitive
substitutions or by repetitive (or uniformly recurrent) sequences. 
The situation for symbolic dynamics in higher dimensions is more 
complex, both conceptually and mathematically, and thus postponed 
to Section~\ref{sec:extend}.

Let $\cA = \{ a^{}_{1}, \ldots , a^{}_{n} \}$ be a finite alphabet,
and $\XX_{\cA} := \cA^{\ZZ}$ the full shift space, or full shift for
short, which is compact in the standard product topology. Elements are
written as bi-infinite (or two-sided) sequences,
$x=(x_{i})^{}_{i\in\ZZ}$.  On $\XX_{\cA}$, we have a continuous action
of the group $\ZZ$ via the \emph{shift operator} $S$ (simply called
shift from now on) as its generator, where $(Sx)_{i} := x_{i+1}$.  We
denote the group generated by $S$ as $\langle S \ts \rangle$, and
similarly for other groups. Clearly, $(\XX_{\cA},\ZZ)$ is a
topological dynamical system, and $(S^{-1}x)_{i} = x_{i-1}$ defines
the inverse of $S$; we refer to \cite{LM} for general background. Let
$\XX$ be a closed (hence compact) and shift invariant subspace of
$\XX_{\cA}$. Below, we will always assume that the shift action on
$\XX$ is \emph{faithful}, meaning that
$\cG = \langle S|^{}_{\XX} \ts \rangle \simeq \ZZ$. This implies $\XX$
to be an infinite set. In particular, it excludes orbit closures of
periodic sequences. The corresponding dynamical system $(\XX,\ZZ)$,
also written as $(\XX,S)$, is referred to as a \emph{shift with
  faithful action}. When the context is clear, we will often simply
write $\XX$, and drop the word faithful.

Following \cite{K-book}, $\Aut (\XX)$ is the group of all
homeomorphisms of the topological space $\XX$, which is the
automorphism group in the sense of Smale (and generally much larger
than the group often defined in one-dimensional symbolic dynamics).
Clearly, one always has $S\in\Aut (\XX)$.  The \emph{symmetry group}
of $(\XX,S)$ is the (topological) centraliser of
$\langle S \ts \rangle$ in $\Aut (\XX)$, hence
\begin{equation}\label{eq:def-S}
   \cS (\XX) \, = \, \{ H \in \Aut (\XX) \mid 
     H \circ S = S \circ H \ts \} \, = \,
   \mathrm{cent}^{}_{\Aut (\XX)} \langle S \ts \rangle \ts .
\end{equation}
This is often also called the automorphism group of a shift space
\cite{Hed,CQY,DDMP,CK0}, written as $\Aut (\XX, S)$, but we shall not
use this terminology below to avoid misunderstandings with
$\Aut (\XX)$.  It is obvious that $\cS (\XX)$ will contain
$\langle S \ts \rangle = \{ S^{n} \mid n\in\ZZ \}$ as a subgroup,
where we generally assume $\langle S \ts \rangle \simeq \ts \ZZ$ as
explained earlier. However, $\cS (\XX)$ will generally be much larger
than its normal subgroup $\langle S \ts \rangle$. In fact, $\cS (\XX)$
will generally not even be amenable, which is the main reason why
$\cS (\XX)$ has not attracted much attention in the past. Yet, under
certain geometric or algebraic constraints, $\cS (\XX)$ becomes
tractable, which are the cases of most interest to us; see
Remark~\ref{rem:symm} below for more.  Moreover, Hochman's work
describes how, generically, there is only one Cantor aperiodic system;
compare \cite[Thm.~1.2]{Hoch}.

Similarly, one defines
\begin{equation}\label{eq:def-R}
   \cR (\XX) \, = \, \{ H \in \Aut (\XX) \mid
    H \circ S \circ H^{-1} = S^{\pm 1} \}  \ts .
\end{equation}
This is also a subgroup of $\Aut (\XX)$, called the \emph{reversing
  symmetry group} of $(\XX, S)$, which either satisfies
$\cR (\XX) = \cS (\XX)$ or is an index-$2$ extension of $\cS(\XX)$;
see \cite{Lamb}, or \cite{OS} and references therein.  Due to our
general assumption on the faithful action of $S$, we know that $S$
cannot act as an involution on $\XX$. Then, an element of $\cR (\XX)$
that conjugates $S$ into its inverse is called a \emph{reversor}, and
$(\XX,S)$ is called \emph{reversible}; see \cite{BR-Bulletin,OS} for
general background. Reversors are always elements of even (or
infinite) order, and for one particular class of shifts that we study,
they are always involutions; see
Corollary~\ref{cor:reversorshuffle}. Note that the reversing symmetry
group is a subgroup of the normaliser of $\cS (\XX)$ in $\Aut (\XX)$,
the latter denoted by $\mathrm{norm}^{}_{\Aut(\XX)} (\cS (\XX))$,
and agrees with it in simple situations. In general, however, it
will be a true subgroup, because we do not allow for situations where
$S$ is conjugated into some other symmetry of infinite order,
say. Nevertheless, for shifts $(\XX, S)$ with faithful action, one has
$\cR (\XX) = \mathrm{norm}^{}_{\Aut (\XX)} \langle S \ts \rangle$; see
  \cite{BR-Bulletin,OS} for more complicated situations.

\begin{rem}\label{rem:symm}
  Recent results on the symmetry groups of rigid shifts extend, in a
  straightforward manner, to similar results for the reversing
  symmetry groups. For example, Cyr and Kra \cite{CK0} show that, for
  a transitive shift with complexity of subquadratic growth, the
  (possibly infinite) factor group
  $\mathcal S(\mathbb X)/ \langle S\ts \rangle$ is \emph{periodic},
  which means that each element of it is of finite order. If $H$ is a
  reversor, $H^2$ is a symmetry, so that, for any transitive
  subquadratic shift, there exist $n \in\NN$ and $k\in\ZZ$ such that
  $(H^{2})^n=S^k$. Hence,
  $\mathcal R(\mathbb X)/ \langle S\ts \rangle$ is periodic as
  well. When the complexity of a minimal shift grows at most linearly,
  $\mathcal S(\mathbb X)/ \langle S\rangle$ is finite; see
  \cite{CK,CQY,DDMP}.  The key common point in all three proofs is
  that the set of nontrivial right-asymptotic orbit equivalence
  classes for shifts of at most linear complexity is \emph{finite};
  see below for definitions. These classes are then mapped to each
  other by a symmetry. In the same vein, a reversor maps nontrivial
  right-asymptotic orbit equivalence classes to nontrivial
  left-asymptotic classes, of which there are also only finitely many.
  \exend
\end{rem}

From now on, we will usually drop the composition symbol for
simplicity.  For the full shift $\XX_{\cA}$, the \emph{reflection} $R$
defined by
\begin{equation}\label{eq:reflect}
     (R\ts x)^{}_{n} \, = \, x^{}_{-n}
\end{equation}
is an involution in $\Aut (\XX_{\cA})$, and it is easy to check that
$RS\nts R = S^{-1}$. Consequently,
\[
   \cR (\XX_{\cA}) \, = \, \cS (\XX_{\cA}) \rtimes C_{2}
\]
is a semi-direct product by standard arguments \cite{BR-Bulletin,OS},
where $C_{2} = \langle R \ts \rangle$ is the cyclic group of order $2$
that is generated by $R$. This will also be the situation for all
shift spaces $\XX\subseteq \XX_{\cA}$ that are invariant under
$R$. Following \cite{TAO}, we call a shift space $\XX$ reflection
symmetric or \emph{reflection invariant} if $R ( \XX ) = \XX$. Note
that this property only refers to $\XX$ as a (topological) space, and
makes no reference to $S$. The following standard result is a
consequence of \mbox{\cite[Lemma~1]{BR-Bulletin}} and the structure of
semi-direct products.

\begin{fact}\label{fact:semi}
  If\/ $(\XX, S)$ is a reflection invariant shift such that\/ $S$ does
  not act as an involution, the reversing symmetry group satisfies
\[
   \cR (\XX) \, = \, \cS (\XX) \rtimes C_{2}
\]
  with\/ $C_{2} = \langle R \ts \rangle$ and\/ $R$ as defined in
  Eq.~\eqref{eq:reflect}, which is thus an involutory reversor. 

  More generally, irrespective of reflection invariance of\/ $\XX$,
  the same structure of\/ $\cR (\XX)$ emerges if\/ $\Aut (\XX)$
  contains an involution, $R^{\ts \prime}$ say, which
  conjugates\/ $S$ into its inverse.

  If, in addition, one has\/
  $\cS (\XX) = \langle S \rangle \simeq \ZZ$, any reversor must be of
  the form\/ $R^{\ts \prime} \ts S^{m}$ for some\/ $m\in\ZZ$, and is
  thus an involution as well.  \qed
\end{fact}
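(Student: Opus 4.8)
The plan is to read off the structure of $\cR (\XX)$ from a single homomorphism together with the recognition criterion for internal semi-direct products. Conjugation by an arbitrary $H \in \Aut (\XX)$ restricts to an automorphism of $\langle S \ts \rangle \simeq \ZZ$, and since $\Aut (\ZZ) = \{ \pm 1 \}$, this defines a map $\varepsilon \colon \cR (\XX) \to \{ \pm 1 \}$ via $H S H^{-1} = S^{\varepsilon (H)}$. A one-line check using $(H^{}_{1} H^{}_{2}) S (H^{}_{1} H^{}_{2})^{-1} = S^{\varepsilon (H^{}_{1}) \varepsilon (H^{}_{2})}$ shows that $\varepsilon$ is a group homomorphism with $\ker \varepsilon = \cS (\XX)$; this is essentially \cite[Lemma~1]{BR-Bulletin}. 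Consequently $\cS (\XX)$ is normal in $\cR (\XX)$, and either $\varepsilon$ is trivial, giving $\cR (\XX) = \cS (\XX)$, or $\varepsilon$ is onto and $[\cR (\XX) : \cS (\XX)] = 2$.

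Next I would produce an involutory element of $\varepsilon^{-1}(-1)$. For the first statement, reflection invariance $R (\XX) = \XX$ means the reflection $R$ from Eq.~\eqref{eq:reflect} lies in $\Aut (\XX)$, and the computation $(R S R \ts x)^{}_{n} = x^{}_{n-1} = (S^{-1} x)^{}_{n}$ gives $R S R = S^{-1}$, hence $\varepsilon (R) = -1$, while $R^{2} = \id$. For the second statement, the assumed involution $R^{\ts \prime}$ with $R^{\ts \prime} S \ts R^{\ts \prime} = S^{-1}$ plays the same role. Call this involutory reversor $J$ in either case. Because $S$ does not act as an involution, $S^{-1} \neq S$, so $J \notin \ker \varepsilon = \cS (\XX)$; thus $J$ has order $2$, the subgroup $\langle J \rangle \simeq C^{}_{2}$ meets $\cS (\XX)$ trivially, and $\cS (\XX) \langle J \rangle$ is a subgroup of $\cR (\XX)$ strictly larger than $\ker \varepsilon$, so it equals $\cR (\XX)$. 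The semi-direct product recognition theorem then yields $\cR (\XX) = \cS (\XX) \rtimes C^{}_{2}$ with $C^{}_{2} = \langle J \rangle$; taking $J = R$ gives the first claim, with $R$ the asserted involutory reversor.

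For the final statement, suppose in addition $\cS (\XX) = \langle S \rangle \simeq \ZZ$ and let $H$ be any reversor, so $\varepsilon (H) = -1 = \varepsilon (R^{\ts \prime})$ and hence $\varepsilon (R^{\ts \prime} H) = 1$. Then $R^{\ts \prime} H \in \cS (\XX) = \langle S \rangle$, say $R^{\ts \prime} H = S^{m}$, and since $R^{\ts \prime}$ is an involution we get $H = R^{\ts \prime} S^{m}$. Finally $H^{2} = R^{\ts \prime} S^{m} R^{\ts \prime} S^{m} = (R^{\ts \prime} S \ts R^{\ts \prime})^{m} S^{m} = S^{-m} S^{m} = \id$, so $H$ is an involution.

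I do not anticipate a real obstacle here, since the statement is essentially bookkeeping once $\varepsilon$ is set up. The only two points that genuinely need attention are, first, that the reflection $R$ actually lies in $\Aut (\XX)$ --- this is exactly where reflection invariance of $\XX$ enters and is indispensable --- and, second, the verification of the three ingredients of the semi-direct product (normality of $\cS (\XX)$, trivial intersection $\cS (\XX) \cap \langle R \rangle = \{ \id \}$, and joint generation), together with the observation that the hypothesis that $S$ does not act as an involution is exactly what forces the reversor out of $\cS (\XX)$, so that $\langle R \rangle$ contributes a genuine factor $C^{}_{2}$.
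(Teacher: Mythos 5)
Your proposal is correct and follows exactly the route the paper intends: the paper states this as a Fact with only a pointer to \cite[Lemma~1]{BR-Bulletin} (the homomorphism $\varepsilon$ onto $\{\pm 1\}$ with kernel $\cS(\XX)$) plus the recognition criterion for internal semi-direct products, and your write-up simply fills in those standard details, including the correct use of the hypothesis that $S$ is not an involution to place the involutory reversor outside $\cS(\XX)$. No gaps.
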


Let us return to the full shift and ask for other reversors. Clearly,
we could also consider another reflection, $R^{\ts \prime}$, defined by
\begin{equation}\label{eq:def-alt-R}
   (R^{\ts \prime} x)^{}_{n} \, = \, x^{}_{-n-1} \ts .
\end{equation}
This is another involution, related to the previous one by
$R^{\ts \prime} = SR$, which will show up many times later on. More
generally, any reversor for $\XX_{\cA}$ can be written as $G R$ with
$R$ from Eq.~\eqref{eq:reflect} and $G\in\cS (\XX_{\cA})$; this is
described in the proof of Proposition~\ref{prop:CHL}. Now, the full
shift is certainly invariant under any map $F_{\alpha}$ that is
induced by applying the same permutation
$\alpha \in \varSigma^{}_{\! \cA}$ to each entry of a sequence $x$,
\begin{equation}\label{eq:def-LEM}
    (F_{\alpha} (x))_{n} \, := \, \alpha (x_{n}) \ts ,
\end{equation}
where $\varSigma^{}_{\! \cA}$ denotes the permutation group of
$\cA$. We call such a map a \emph{letter exchange map}, or LEM for
short. Now, for $\XX_{\cA}$, we clearly also have the reversors
$R_{\alpha}$, defined by
\begin{equation}\label{eq:perm-rev}
     (R_{\alpha} ( x ))^{}_{n} \, := \, \alpha (x^{}_{-n}) \ts ,
\end{equation}
which includes our original reversor via $R=R_{e}$ with $e$ the
identity in $\varSigma^{}_{\! \cA}$. Similarly, one may consider
$R^{\ts\prime}_{\alpha} = F_{\alpha} R^{\ts \prime} = S R_{\alpha}$.  Clearly,
$F_{\alpha}$ commutes with both $R$ and $R^{\ts \prime}$. Note also
that $R_{\alpha}$ need no longer be an involution, though it is always
an element of even order, which is
\[
   \ord (R_{\alpha} ) \, = \, \lcm (2, \ord (\alpha)) \ts . 
\]   
These letter permuting reversors form an interesting class of
candidates for shifts that fail to be reflection invariant, as we
shall see later. Clearly, if $R_{\alpha}$ is a reversor, then so is
$R^{\ts\prime}_{\alpha}=SR_{\alpha}$ as well as $S^{m}R_{\alpha}$ for any
$m\in\ZZ$, which all have the same order as $R_{\alpha}$.  \smallskip

Fact~\ref{fact:semi} reveals nothing about the structure of
$\cS (\XX)$, but some classic results \cite{Coven} together with
recent progress \cite{Olli,CQY} allow the determination of this group
for many interesting cases.  Also, Fact~\ref{fact:semi} indicates that
particularly noteworthy instances of reversibility will only emerge
for shifts that \emph{fail} to be reflection invariant. So, let us
expand a little on reflection invariance.

Recall that a shift $\XX$ is called \emph{minimal} when every
(two-sided) shift orbit in $\XX$ is dense.  A minimal shift $\XX$ is
called \emph{palindromic} when it contains an element $x$ that
satisfies $R(x)=x$ (odd core) or $R^{\ts \prime}(x)=x$ (even core).
Palindromicity of $\XX$ then implies its reflection invariance. The
converse is also true for substitution shift spaces over binary
alphabets \cite{Tan}, but false for general shifts \cite{BBCF}, and
seems still open for substitution shifts \cite{Q} on a larger alphabet.

Consider a shift $\XX$ that is defined by a primitive substitution
rule $\theta$, as the orbit closure of a fixed point of $\theta^{n}$
for some $n\in\NN$, say, where here and below we use the
approach from \cite{TAO}. Note that the notion of a two-sided fixed
point $x$ includes the condition that the core or \emph{seed} of $x$,
which is the word $x^{}_{-1} | x^{}_{0}$, is legal for $\theta$.
Since any such fixed point is repetitive, $\XX$ is minimal, which is a
consequence of Gottschalk's theorem; see \cite{Pet,TAO} for
details. Now, we call $\theta$ \emph{palindromic} if the shift $\XX$
defined by $\theta$ is; compare \cite[Sec.~4.3]{TAO}. For the
palindromicity of $\theta$, a useful sufficient criterion was
formulated in \cite[Lemma~3.1]{HKS}; see also \cite[Lemma~4.5]{TAO}.

\begin{fact}\label{fact:palin}
  Let\/ $\theta$ be a primitive substitution on\/
  $\cA=\{a^{}_{1},\ldots,a^{}_{n}\}$, with\/
  $\theta(a^{}_{i}) = p\ts q^{}_{i}$ for\/ $1\leqslant i\leqslant n$,
  where\/ $p$ and all\/ $q^{}_{i}$ are palindromes, possibly
  empty. Then, $\theta$ is palindromic.  Likewise, $\theta$ is
  palindromic if\/ $\theta(a^{}_{i}) = q^{}_{i} \ts\ts p$ with\/ $p$
  and all\/ $q^{}_{i}$ being palindromes, possibly empty.  \qed
\end{fact}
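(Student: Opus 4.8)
The plan is to verify directly that, under the stated hypotheses, the shift $\XX$ defined by $\theta$ contains a point fixed (up to shift) by a reflection, i.e.\ that $\theta$ is palindromic in the sense just introduced. I would treat the two cases symmetrically; it suffices to do the first, where $\theta(a^{}_{i}) = p\ts q^{}_{i}$ with $p$ and all $q^{}_{i}$ palindromes. The key observation is that palindromicity is inherited under iteration in a controlled way: if $w$ is a palindrome, then I would like $\theta(w)$ to be (close to) a palindrome as well, so that a suitable legal palindromic word can be grown to arbitrary length and used to build a reflection-invariant element of $\XX$.

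First I would record the elementary reversal identity for the image of a single letter: writing $\widetilde{u}$ for the reversal of a word $u$, we have $\widetilde{\theta(a^{}_{i})} = \widetilde{q^{}_{i}}\ts\widetilde{p} = q^{}_{i}\ts p$ since $p,q^{}_{i}$ are palindromes. Next I would compute $\theta(\widetilde{w})$ for a word $w = b^{}_{1}\cdots b^{}_{k}$ and compare it with $\widetilde{\theta(w)}$. One finds that $\theta(w)$ and its reversal differ only by a bounded `boundary effect' coming from the fixed prefix $p$: concretely, $p^{-1}\theta(w)$ and (a reversed/conjugated version of) $\theta(\widetilde{w})$ agree, so that if $w$ is a palindrome then $p^{-1}\theta(w)$ is again essentially a palindrome, up to the leftover $p$ on one side. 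Iterating, one sees that long words of the form $p^{-1}\theta^{n}(a)$, for an appropriate letter $a$, are palindromes, and by primitivity these words are legal and their lengths tend to infinity. Choosing $a$ with $a^{}_{1}$ appearing in some $\theta^{m}(a^{}_{1})$ in a suitably central position (or passing to a power of $\theta$ so that a fixed point exists), I would assemble these growing palindromes into a two-sided sequence $x\in\XX$ with $R(x) = x$ or $R^{\ts\prime}(x)=x$, whence $\XX$ is palindromic.

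Alternatively, and perhaps more cleanly, I would invoke the sufficient criterion of \cite[Lemma~3.1]{HKS}, restated as \cite[Lemma~4.5]{TAO}, whose hypothesis is exactly the factorisation $\theta(a^{}_{i}) = p\ts q^{}_{i}$ (or $q^{}_{i}\ts p$) with palindromic pieces; in that case the proof reduces to quoting that result and noting that the definition of `$\theta$ palindromic' given just above is the one used there. Either way, the second case $\theta(a^{}_{i}) = q^{}_{i}\ts p$ follows from the first by applying the argument to the reversed substitution $\widetilde{\theta}$ defined by $\widetilde{\theta}(a^{}_{i}) = \widetilde{\theta(a^{}_{i})}$, or simply by the mirror-image version of the boundary-effect computation, using the even core $R^{\ts\prime}$ in place of $R$.

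The main obstacle is the bookkeeping of the boundary term: one must check carefully that the fixed affix $p$ really does cancel against itself under iteration and does not accumulate or obstruct legality, and that a genuine two-sided element of $\XX$ fixed by a reflection is produced (not merely arbitrarily long palindromic factors, which by compactness already give reflection invariance of $\XX$ but one wants the palindromic point itself). This is precisely the point handled in \cite[Lemma~3.1]{HKS}, so the cleanest route is to cite it; the `own proof' route above is essentially a reproduction of that lemma's argument and I would only include it if a self-contained treatment were wanted.
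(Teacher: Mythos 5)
Your proposal is correct and matches the paper, which states this fact without proof, attributing it precisely to \cite[Lemma~3.1]{HKS} and \cite[Lemma~4.5]{TAO} --- the citation route you identify as cleanest. Your sketched direct argument (the identity $\widetilde{\theta(w)} = p^{-1}\ts\theta(\widetilde{w}\ts)\ts p$, so that $p^{-1}\theta(w)$ is a palindrome whenever $w$ is, followed by iteration and extraction of a centred limit point) is a faithful reproduction of the proof of that cited lemma.
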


Let us now discuss some general tools that we will need in our further
considerations. We first recall the classic Curtis--Hedlund--Lyndon
(CHL) theorem \cite[Thm.~6.2.9]{LM} in a version for symmetries of
shift spaces, as well as the corresponding one for reversors, whose
proof is similar to that of the classical version; see \cite{LM} for
background.

\begin{prop}\label{prop:CHL}
  Let\/ $\XX$ be a shift over the finite alphabet\/ $\cA$, with
  faithful shift action. For any\/ $G\in \cS (\XX)$, there exist
  non-negative integers\/ $\ell , r$ together with a map\/
  $h \! : \, \cA^{\ell+r+1}\xrightarrow{\quad} \cA$ such that\/
  $(G(x))^{}_{n} = h (x^{}_{n-\ell}, \ldots ,x^{}_{n}, \ldots
  ,x^{}_{n+r})$
  holds for all\/ $x\in \XX$ and\/ $n\in \ZZ$. Likewise, for any
  reversor\/ $H\in \cR(\XX) \setminus \cS(\XX)$, there are
  non-negative integers\/ $\ell , r$ and a map\/ $h$ such that\/
  $(H(x))^{}_{n}= h (x^{}_{-n-r}, \ldots ,x^{}_{-n}, \ldots ,
  x^{}_{-n+\ell})$.
\end{prop}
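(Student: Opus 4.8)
The symmetry half is immediate from the classical theorem. For $G \in \cS(\XX)$ one has $G \circ S = S \circ G$, so $G$ is a continuous, shift-commuting self-map of the subshift $\XX$, and the Curtis--Hedlund--Lyndon theorem \cite[Thm.~6.2.9]{LM} directly yields nonnegative integers $\ell, r$ and a map $h$ with $(G(x))_{n} = h(x_{n-\ell}, \ldots, x_{n+r})$.

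For the reversor half, the plan is \emph{not} to reindex an unreflected sliding-block form, but to absorb the orientation reversal into the reflection $R$ and then let the index flip fall out of the identity $(Rx)_{n} = x_{-n}$. Let $R$ be the reflection of the full shift $\XX_{\cA}$ from Eq.~\eqref{eq:reflect}, so that $RSR = S^{-1}$, equivalently $SR = RS^{-1}$. Although $\XX$ need not be reflection invariant, its image $\YY := R(\XX)$ is again a subshift over $\cA$: it is compact as the continuous image of a compact set, and shift invariant because $S\YY = SR(\XX) = RS^{-1}(\XX) = R(\XX) = \YY$, using shift invariance of $\XX$. Hence $R$ restricts to a homeomorphism $\XX \xrightarrow{\ } \YY$, which I keep denoting $R$.

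Now set $G := R \circ H$, a homeomorphism (in particular continuous) from $\XX$ onto $\YY$. Since $H \in \cR(\XX)\setminus\cS(\XX)$ forces $HSH^{-1}=S^{-1}$, i.e.\ $HS = S^{-1}H$, and since $RS^{-1} = SR$, I compute
\[
   G S \, = \, R H S \, = \, R S^{-1} H \, = \, S R H \, = \, S G \ts ,
\]
so $G$ intertwines the shift on $\XX$ with the shift on $\YY$. Applying the classical Curtis--Hedlund--Lyndon theorem to $G \! : \, \XX \xrightarrow{\ } \YY$ produces nonnegative integers $\ell', r'$ and a map $h \! : \, \cA^{\ell'+r'+1} \xrightarrow{\ } \cA$ with $(G(x))_{n} = h(x_{n-\ell'}, \ldots, x_{n+r'})$. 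But $(G(x))_{n} = (R(Hx))_{n} = (Hx)_{-n}$ by the definition of $R$. Reading this at the index $-n$ instead of $n$ gives
\[
   (H(x))_{n} \, = \, (R(Hx))_{-n} \, = \,
   h (x_{-n-\ell'}, \ldots , x_{-n}, \ldots , x_{-n+r'}) \ts ,
\]
which is exactly the asserted reflected form upon setting $r := \ell'$ and $\ell := r'$ (and, if desired, reversing the order in which $h$ reads its arguments, a genuinely cosmetic relabeling).

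The point at which a naive argument goes astray is precisely here: the indexing around $-n$ is \emph{forced}, arising from the substitution $n \mapsto -n$ dictated by $(Rx)_{n} = x_{-n}$ once $H$ has been turned into a truly shift-commuting map $G$ by folding the orientation reversal into $R$; it is not a harmless reindexing of the unreflected form $(H(x))_{n} = h(x_{n-r}, \ldots, x_{n+\ell})$, which would describe the wrong orientation. The only steps needing care are the verification that $\YY = R(\XX)$ is a bona fide subshift, so that the classical theorem applies to $G$, and the bookkeeping swap $\ell = r'$, $r = \ell'$.
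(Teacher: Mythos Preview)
Your proof is correct and follows essentially the same strategy as the paper: absorb the orientation reversal into the reflection $R$, apply the classical CHL theorem to the resulting shift-commuting map, and then unwind the reflection to get the reflected block form. The only cosmetic difference is that you factor $H$ as $R^{-1}\circ G$ with $G = RH : \XX \to \YY$, whereas the paper factors it as $H'\circ R$ with $H' = HR : \YY \to \XX$; this merely moves the index bookkeeping from one end of the argument to the other.
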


\begin{proof}
If $G\in \cS (\XX)$, we know that $G\in \Aut (\XX)$ and the 
(horizontally written) diagram
\[
\begin{CD}
   \XX @> G >> \XX \\
   @V S VV   @VV S V \\
   \XX @> G >> \XX
\end{CD}
\]
commutes, so the classic CHL theorem applies and asserts that $G$ is a
sliding block map with \emph{local rule}, or \emph{local derivation
  rule}, $h$.

Now, let $H$ (and hence also $H^{-1}$) be a reversor. Since 
the involution $R$ from Eq.~\eqref{eq:reflect} need not be
in $\Aut (\XX)$, we define $\YY = R (\XX)$ and consider the 
commutative diagram
\[
\begin{CD}
   \XX @> R >> \YY @> HR >> \XX \\
   @V S^{-1} = H^{-1} S H VV   @V T VV @ VV S V \\
   \XX @> R >> \YY @> HR >> \XX
\end{CD}
\]
where $T$ is the shift on $\YY$, defined the same way as $S$. 
Applying the CHL theorem to $H' =
H R$ gives us two integers $\ell, r \geqslant 0$ and some function
$h'$ such that $(H' y)^{}_{n} = h' (y^{}_{n-\ell}, \ldots , y^{}_{n+r}
)$ for all $y\in\YY$. Now, set $h (z^{}_{1}, \ldots , z^{}_{\ell + r +
  1} ) = h' (z^{}_{\ell + r + 1}, \ldots, z^{}_{1} )$ and observe that
each $y\in\YY$ can be written as $y = Rx$ for some $x\in\XX$,
wherefore we get
\[
\begin{split}
   (H x)^{}_{n} \, & = \,  \bigl(H \! R (Rx)\bigr)_{n} \, = \,
   \bigl( H' y \bigr)_{n} \, = \, 
    h' ( y^{}_{n-\ell}, \ldots , y^{}_{n+r} ) \\
    & = \, h' ( x^{}_{\ell - n}, \ldots , x^{}_{-r -n} ) \, = \,
   h (x^{}_{-n-r}, \ldots , x^{}_{-n+\ell} )
\end{split}   
\] 
as claimed.
\end{proof}

In line with \cite[Sec.~5.2]{TAO}, if $G\in\cS(\XX)$ has a local
derivation rule $h$ that is specified on
$(\ell \! + \! r \! + \! 1)$-tuples of letters in $\cA$, we call
$\max \{\ell, r \}$ the \emph{radius} of $G$.  Further, if $H$ is a
reversor, it can be written as $H=H' R$ with
$H'\! : \, R(\XX) \xrightarrow{\quad} \XX$ some sliding block map.  We
will frequently use the local rule of $H'$ to investigate $H$.  In the
proof of Proposition \ref{prop:CHL}, any involution could have been
used, and indeed, sometimes it will be more convenient for us to write
$H$ as $H= (H R^{\ts \prime}) R^{\ts \prime}$ and to work with the the
local rule of $H' :=HR^{\ts \prime}$ instead. These two approaches
reflect the two possible types of (infinite) palindromes, namely those
with odd or with even core.

Let us next recall the notion of the maximal equicontinuous factor,
followed by the introduction of some important tools connected with
it.  To this end, let $(\XX,S)$ be a topological dynamical system. The
dynamical system $(\AAA,T)$ is then called the \emph{maximal
  equicontinuous factor} (MEF) of $(\XX,S)$ if $(\AAA,T)$ is an
equicontinuous factor of $(\XX,S)$, which means that it is a factor
with the action of $T$ being equicontinuous on $\AAA$, with the
property that any other equicontinuous factor $(\YY,R)$ of $(\XX,S)$
is also a factor of $(\AAA,T)$.  The MEF of a minimal transformation
is a rotation on a compact monothetic topological group
\cite[Thm.~2.11]{Pet}, which is a group $\AAA$ for which there exists
an element $a$ such that the subgroup generated by $a$ is dense. Such
a group is always Abelian and we will thus write the group operation
additively.

The setting is that of a commutative diagram,
\begin{equation}\label{eq:CD1}
\begin{CD}
   \XX @> S >> \XX \\
   @V \phi VV   @VV \phi V \\
   \AAA @> T >> \AAA
\end{CD}
\end{equation}
with $\phi$ a continuous surjection that satisfies
$\phi \circ S = T \circ \phi$ on $\XX$. Also, $T$ is realised as a
translation on $\AAA$, so there is a unique $a\in\AAA$ such that
$T (y) = y+a$ for all $y\in\AAA$.  The MEF can be trivial, but many
shifts possess a nontrivial MEF, and if so, we can glean information
from it about the symmetry and reversing symmetry groups. This is
described by the following theorem, the proof of which is an extension
of \cite[Thm.~3.3]{CQY} to include reversing symmetries.

\begin{thm}\label{thm:equicontinuous}
  Let\/ $(\XX,S \ts )$ be a shift with faithful shift action and at
  least one dense orbit. Suppose further that the group rotation\/
  $(\AAA, T)$ with dense range is its MEF, with\/ $\phi \! : \, \XX
  \xrightarrow{\quad} \AAA$ the corresponding factor map.

  Then, there is a group homomorphism\/ $\kappa \! : \, \cS (\XX)
  \xrightarrow{\quad} \AAA$ such that 
\[
   \phi (G(x)) \, = \, \kappa(G) + \phi (x)
\] 
holds for all\/ $x\in \XX$ and\/ $G \in \cS (\XX)$, so any symmetry\/
$G$ induces a unique mapping\/ $G^{}_{\! \AAA}$ on the Abelian group\/
$\AAA$ that acts as an addition by\/ $\kappa (G)$.

Moreover, if\/ $(\XX, S \ts )$ is a reversible shift,
there is an extension of\/ $\kappa$ to a\/ $1$-cocycle of the
action of\/ $\cR (\XX)$ on\/ $\AAA$ such that
\begin{equation}\label{eq:cocycle}
   \kappa(G H) \, = \, \kappa(G) + \varepsilon(G) \, \kappa(H)
\end{equation}
for all\/ $G,H \in \cR (\XX)$, where\/
$\varepsilon\! : \, \cR(\XX) \xrightarrow{\quad} \{ \pm 1 \}$ is a
group homomorphism with values\/ $\varepsilon(G)=1$ for symmetries
and\/ $\varepsilon(G)=-1$ for reversing symmetries.\footnote{As such,
$\varepsilon$ can be viewed as a homomorphism into\/
$\Aut (\AAA)$ with kernel\/ $\cS (\XX)$.}

As a consequence, one has\/ $\kappa (H^{2}) = 0$ for all\/
$H\in\cR(\XX) \setminus \cS(\XX)$. Any such reversor\/ $H$ induces a
mapping\/ $H^{}_{\! \AAA}$ on\/ $\AAA$ that acts as\/
$z \mapsto \kappa (H) - z$, hence
\[
  \phi (H(x)) \, = \, \kappa(H) - \phi (x) \ts ,
\]
  for all\/ $H\in\cR(\XX) \setminus \cS(\XX)$ and all\/ $x\in\XX$.
\end{thm}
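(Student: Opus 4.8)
The plan is to build $\kappa$ on $\cS(\XX)$ first, using the universal property of the MEF, and then extend to $\cR(\XX)$ by analysing how a reversor acts on $\AAA$. For a symmetry $G$, the key observation is that $\phi \circ G \! : \, \XX \to \AAA$ is a factor map from $(\XX,S)$ onto $(\AAA,T)$ as well, since $\phi\circ G\circ S = \phi\circ S\circ G = T\circ\phi\circ G$. Because $(\AAA,T)$ is the \emph{maximal} equicontinuous factor and, by minimality of the base (dense orbit plus group rotation structure), the factor map onto $\AAA$ is essentially unique up to a rotation, there is a unique $\kappa(G)\in\AAA$ with $\phi(G(x)) = \kappa(G) + \phi(x)$ for all $x$. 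Concretely, one picks a point $x_0$ with dense orbit, sets $\kappa(G) := \phi(G(x_0)) - \phi(x_0)$, and shows the identity propagates along the orbit of $x_0$ (using $\phi\circ S = T\circ\phi$ and commutativity of $\AAA$) and then to all of $\XX$ by density and continuity. That $\kappa$ is a homomorphism, $\kappa(GG') = \kappa(G)+\kappa(G')$, is then a one-line computation from the defining identity applied twice.

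Next I would handle a reversor $H\in\cR(\XX)\setminus\cS(\XX)$. Here $\phi\circ H$ intertwines $S$ with $S^{-1}$, so $(\phi\circ H)\circ S = \phi\circ S^{-1}\circ H = T^{-1}\circ(\phi\circ H)$; thus $\phi\circ H$ is a factor map from $(\XX,S)$ onto $(\AAA, T^{-1})$. Now $z\mapsto -z$ is a topological conjugacy from $(\AAA,T)$ to $(\AAA,T^{-1})$, so $-\phi\circ H$ is again a factor map onto $(\AAA,T)$, and by the same uniqueness-up-to-rotation argument there is a unique element, call it $\kappa(H)\in\AAA$, with $-\phi(H(x)) = -\kappa(H) + \phi(x)$, i.e. $\phi(H(x)) = \kappa(H) - \phi(x)$ for all $x\in\XX$. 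This exhibits $H_{\!\AAA}\! : \, z\mapsto \kappa(H) - z$ as the induced map on $\AAA$. Setting $\varepsilon(G) = 1$ for $G\in\cS(\XX)$ and $\varepsilon(H) = -1$ for reversors, and noting $\varepsilon$ is a homomorphism because the product of two reversors is a symmetry while a reversor times a symmetry is a reversor, the cocycle identity $\kappa(GH) = \kappa(G) + \varepsilon(G)\kappa(H)$ follows by composing the two defining relations in all four sign combinations: for instance, if $G$ is a reversor and $H$ a symmetry, $\phi(GH(x)) = \kappa(G) - \phi(H(x)) = \kappa(G) - \kappa(H) - \phi(x)$, matching $\kappa(G) + \varepsilon(G)\kappa(H) = \kappa(G) - \kappa(H)$; the other three cases are analogous.

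Finally, the consequences: for $H\in\cR(\XX)\setminus\cS(\XX)$, $H^2\in\cS(\XX)$, and applying the reversor identity twice gives $\phi(H^2(x)) = \kappa(H) - \phi(H(x)) = \kappa(H) - \kappa(H) + \phi(x) = \phi(x)$, so by the defining property of $\kappa$ on $\cS(\XX)$ and uniqueness we get $\kappa(H^2) = 0$; this also drops straight out of the cocycle relation with $G = H$ since $\kappa(H^2) = \kappa(H) + \varepsilon(H)\kappa(H) = \kappa(H) - \kappa(H) = 0$. The last displayed formula $\phi(H(x)) = \kappa(H) - \phi(x)$ is exactly what was established in the previous paragraph.

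The main obstacle is making the uniqueness step fully rigorous, i.e. justifying that any factor map $\XX\to\AAA$ onto the rotation differs from $\phi$ only by a translation of $\AAA$. Without minimality this can fail, so one leans on the standing hypothesis of a dense orbit: if $\psi$ is another such factor map, then $x\mapsto \psi(x) - \phi(x)$ is continuous and, because both $\phi$ and $\psi$ intertwine $S$ with translation by the \emph{same} $a\in\AAA$ (the generator of the dense subgroup), this difference is $S$-invariant, hence constant on a dense orbit, hence constant. This pins down $\kappa(G)$ and $\kappa(H)$ unambiguously and shows the induced maps $G_{\!\AAA}$, $H_{\!\AAA}$ are well defined; the remaining verifications are the short algebraic manipulations indicated above.
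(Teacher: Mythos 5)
Your proposal is correct and follows essentially the same route as the paper: the heart of both arguments is that $\phi(Gx)-\phi(x)$ (resp.\ $\phi(Hx)+\phi(x)$ for a reversor) is continuous and constant along a dense shift orbit, hence constant on $\XX$, which defines $\kappa$; the cocycle identity and $\kappa(H^{2})=0$ then follow by the same short computations. Your packaging of this as ``uniqueness of the factor map onto the rotation up to translation'' is only a cosmetic reformulation, since the justification you give in the final paragraph is precisely the paper's dense-orbit argument.
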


\begin{proof}
  Let $x \in \XX$ be a point with dense shift orbit, which exists by
  assumption. Due to the property of the MEF, we know that $T$ in
  Eq.~\eqref{eq:CD1} acts as addition with dense range, so there is an
  $a\in\AAA$ such that, for all $z\in \AAA$, $T(z) = z + a$ with $\{ z
  + n a \mid n \in \ZZ \}$ dense in $\AAA$.

  Fix an arbitrary symmetry $G \in \cS(\XX)$. We need to show that
  there is a unique induced mapping $G^{}_{\! \AAA}$ on the MEF, via
  the corresponding commutative diagram, and determine what this mapping
  is.  To this end, consider
\[
    f(x) \, := \, \phi ( Gx ) - \phi (x) \ts ,
\]
which is an element of $\AAA$. Since
$\phi (S^{n} x) = \phi (x) + n \ts a$ and
$\phi( G S^{n} x) = \phi(S^{n} G x) = \phi( G x) + n \ts a$, one sees
that $f(S^{n} x ) = f(x)$ for $n\in\ZZ$ is the natural extension of
$f$ to a function on the orbit of $x$. For any $y\in\XX$, there is a
sequence $\bigl( x^{(i)}\bigr)_{i\in\NN}$ with $x^{(i)} = S^{n_{i}} x$
and $x^{(i)} \xrightarrow{\, i \to \infty \, } y$, so that
$f(y) := f(x)$ is the unique way to extend $f$ to a continuous function
on $\XX$ with values in $\AAA$, which is constant in this case.
Consequently, the mapping
$G^{}_{\! \AAA} \! : \, \AAA \xrightarrow{\quad} \AAA$ induced on the
factor by $G$ acts as a translation by
\[
    \kappa (G) \, := \, \phi ( Gx ) - \phi (x) \ts .
\]
This defines a mapping $\kappa \! : \, \cS(\XX) \xrightarrow{\quad}
\AAA$, with $\kappa (S^{n}) = n \ts a$.  The homomorphism property
$\kappa (G H ) = \kappa(G) + \kappa(H)$ then follows from the
concatenation of two commutative diagrams.

Now, if $R\in\cR(\XX) \setminus \cS(\XX)$ is a reversor, one can modify
the derivation, this time leading to
\[
     \kappa (R) \, := \, \phi (Rx) + \phi (x)
\]
because
$\phi (R S x) = \phi (S^{-1} R x) = \phi (R x) - \kappa (S) = \phi (R
x) - a$,
hence $\phi (R S^{n} x) + \phi (S^{n} x) = \phi (R x) + \phi (x)$ for
all $n\in \ZZ$.  As a result, one has
$\phi (R y) = \kappa (R) - \phi(y)$ for all $y \in \XX$ as claimed,
which means that the action of
$R^{}_{\AAA} \! : \, \AAA \xrightarrow{\quad} \AAA$ induced by $R$ is
consistently given by $z \mapsto - z + \kappa (R)$.  The cocycle
property is then a consequence of another concatenation of commuting
diagrams for the different possibilities, and $\kappa (R^2) = 0$
follows immediately.
\end{proof}

A closer inspection of the proof shows that it does not need the
maximality property of the MEF, so that a corresponding result also
holds for general equicontinuous factors. However, since the MEF is
the most useful among them for our porposes, we skip further details;
compare \cite[Sec.~6]{Aus} for related material.  Let us formulate one
important consequence.

\begin{coro}\label{coro:equicontinuous}
  Let the setting for a shift\/ $(\XX, S)$ be that of
  Theorem~$\ref{thm:equicontinuous}$, and define the covering number\/
  $c := \min \{ \card(\phi^{-1}(z)) \mid z \in \AAA \}$.  If\/ $c$ is
  finite, one has that
\begin{enumerate}\itemsep=2pt
\item for each\/ $d\geqslant c$,
  $\{ z \in \AAA \mid \card(\phi^{-1}(z)) =d \} = \{z \in \AAA \mid
  \card(\phi^{-1}(z)) =d \} + \kappa(G)$
  holds for any\/ $G\in \cS(\XX)$,
\item for each\/ $d\geqslant c$,
  $\{ z \in \AAA \mid \card(\phi^{-1}(z)) =d \} = -\{z \in \AAA \mid
  \card(\phi^{-1}(z)) =d \} + \kappa(G)$
  holds for each\/ $G\in \cR (\XX) \backslash \mathcal S(\XX)$, and
\item $\kappa \! : \,  \cS(\XX) \xrightarrow{\quad}  
  \AAA$ and $\kappa \! : \, \cR (\XX) \setminus \cS(\XX) 
  \xrightarrow{\quad}  \AAA$ are each at  most\/ $c$-to-one. 
\end{enumerate} 
\end{coro}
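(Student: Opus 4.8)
The plan is to read the three statements off the intertwining relations provided by Theorem~\ref{thm:equicontinuous}. Every $G\in\cS(\XX)$ is a homeomorphism of $\XX$ with $\phi(G(x))=\kappa(G)+\phi(x)$, so $G$ restricts to a bijection $\phi^{-1}(z)\to\phi^{-1}(z+\kappa(G))$ for each $z\in\AAA$; hence the fibre-counting function $z\mapsto\card(\phi^{-1}(z))$ is invariant under translation by $\kappa(G)$, and claim~(1) follows at once, each level set $\{z:\card(\phi^{-1}(z))=d\}$ being equal to its own $\kappa(G)$-translate. For a reversor $H\in\cR(\XX)\setminus\cS(\XX)$ one has instead $\phi(H(x))=\kappa(H)-\phi(x)$, so $H$ restricts to a bijection $\phi^{-1}(z)\to\phi^{-1}(\kappa(H)-z)$; thus $\card(\phi^{-1}(z))=\card(\phi^{-1}(\kappa(H)-z))$, which is exactly the reflected level-set identity of claim~(2).

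For claim~(3), pick $z_0\in\AAA$ with $\card(\phi^{-1}(z_0))=c$, which is possible because $c$ is finite, and choose $x_0\in\phi^{-1}(z_0)$ whose $S$-orbit is dense in $\XX$. On $\cS(\XX)$ the map $\kappa$ is a homomorphism, so its fibres are the cosets of $K:=\ker\bigl(\kappa|^{}_{\cS(\XX)}\bigr)$ and it suffices to bound $\card(K)$. Consider $K\to\phi^{-1}(z_0)$, $G\mapsto G(x_0)$; this is well defined since $\kappa(G)=0$ gives $G(x_0)\in\phi^{-1}(z_0+\kappa(G))=\phi^{-1}(z_0)$, and it is injective because $G(x_0)=G'(x_0)$ forces $G^{-1}G'$ to fix $x_0$, hence to fix the dense orbit of $x_0$ (as it commutes with $S$), hence to equal $\id$. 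Therefore $\card(K)\le\card(\phi^{-1}(z_0))=c$. On $\cR(\XX)\setminus\cS(\XX)$ the map $\kappa$ is only a cocycle, so I would fix a value $v$ attained by $\kappa$ there; by claim~(2) one has $v-z_0\in\{z:\card(\phi^{-1}(z))=c\}$, and the analogous map $H\mapsto H(x_0)$ sends $\{H\in\cR(\XX)\setminus\cS(\XX):\kappa(H)=v\}$ injectively into $\phi^{-1}(v-z_0)$, now using that $H^{-1}H'\in\cS(\XX)$ still commutes with $S$. This bounds that fibre by $c$ as well.

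The step I expect to cause trouble is the very first choice in the previous paragraph: one needs a minimal-cardinality fibre $\phi^{-1}(z_0)$ that actually contains a point with dense $S$-orbit. When $(\XX,S)$ is minimal this is free, since every orbit is dense and $\{z:\card(\phi^{-1}(z))=c\}$ is nonempty. In general I would instead show that $\phi^{-1}\bigl(\{z:\card(\phi^{-1}(z))=c\}\bigr)$ is dense in $\XX$ --- it is $S$-invariant, and $\{z:\card(\phi^{-1}(z))=c\}$ is a dense $G_{\delta}$ in the minimal rotation $\AAA$ --- and then argue by Baire category: if $G_1,\dots,G_{c+1}\in\cS(\XX)$ had a common $\kappa$-value, then on each point $x$ of this dense set the $c+1$ points $G_i(x)$ would lie in a fibre of size $c$, so by the pigeonhole principle $x\in\mathrm{Fix}(G_j^{-1}G_i)$ for some $i<j$; taking closures, $\XX$ would be a finite union of the closed sets $\mathrm{Fix}(G_j^{-1}G_i)$, hence one of them has nonempty interior, and since $G_j^{-1}G_i$ commutes with $S$ its fixed-point set would then contain a dense $S$-invariant open set and thus all of $\XX$, forcing $G_i=G_j$; the reversing case is identical. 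Making the density of $\phi^{-1}\bigl(\{z:\card(\phi^{-1}(z))=c\}\bigr)$ rigorous --- equivalently, that transitive points map to minimal fibres --- is where the genuine work lies; it is immediate for minimal shifts, where every nonempty $S$-invariant subset is dense, and in general should follow from semi-openness of the factor map onto the MEF.
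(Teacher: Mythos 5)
Your parts (1) and (2) are correct and complete: a symmetry $G$ (resp.\ a reversor $H$) restricts to a bijection from $\phi^{-1}(z)$ onto $\phi^{-1}(z+\kappa(G))$ (resp.\ onto $\phi^{-1}(\kappa(H)-z)$), with $G^{-1}$ (resp.\ $H^{-1}$) supplying the inverse, so the fibre-cardinality function transforms exactly as you say. Bear in mind that the paper offers no argument here at all --- its entire proof is the sentence that the claims are an obvious generalisation of \cite[Thm.~3.3]{CQY} --- so your self-contained write-up is necessarily a different presentation; your mechanism for (3), injecting the kernel of $\kappa|^{}_{\cS(\XX)}$ into a minimal fibre through a point with dense orbit, is the same one that underlies the cited result, and for \emph{minimal} $(\XX,S)$ (which covers every application of the corollary in the paper) your argument is complete, since a fibre of cardinality $c$ then automatically contains a transitive point.

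The gap you flag in part (3) for the merely transitive case is, however, genuine, and your proposed repair does not close it: the factor map onto the MEF of a transitive but non-minimal system need \emph{not} be semi-open, and $\phi^{-1}\bigl(\{z \in \AAA \mid \card(\phi^{-1}(z))=c\}\bigr)$ need not be dense. Concretely, let $\YY$ be a Sturmian shift, let $u \ne v$ be the two points of a two-element fibre over the circle, and let $x^{*}$ splice the left half of $u$ to the right half of $v$; then $\XX = \overline{\cO(x^{*})} = \cO(x^{*}) \cup \YY$ is transitive and aperiodic with the same MEF, one has $c=1$, the set $\phi^{-1}\bigl(\{z \mid \card(\phi^{-1}(z))=1\}\bigr)$ is contained in the proper closed subset $\YY$, the point $x^{*}$ is isolated in $\XX$ (so $\phi$ is not semi-open), and the unique dense orbit lies entirely in fibres of cardinality $3$. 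Thus both your primary argument and your Baire fallback break down there; in this particular example the conclusion of (3) still holds, but proving it requires a different idea (for instance, restricting symmetries to the unique minimal subset first). Since \cite[Thm.~3.3]{CQY} is formulated for minimal shifts, this is as much a gap in the corollary's stated generality as in your proof; it would be worth either adding minimality to the hypotheses or supplying the extra argument for the transitive case.
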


\begin{proof}
These claims are an obvious generalisation of \cite[Thm.~3.3]{CQY}.
\end{proof}
 
We refer to \cite{CQY} for details on how to compute $c$ and $\{ z \in
\AAA \mid \card(\phi^{-1}(z)) > c \}$ for the important class of shifts
that are generated by constant-length substitutions.
\smallskip

Let $(\XX,S)$ be a minimal two-sided shift, and let $\dX$ be a metric
on $\XX$ that induces the local topology on it. Now, we define
equivalence relations on orbits in $\XX$ as follows, where we write
orbits as $\cO (x) = \{S^n (x) \mid n\in \ZZ\}$ for $x\in\XX$.  Two
orbits $\cO_{x}$ and $\cO_{y}$ are \emph{right-asymptotic}, denoted
$\cO_{x} \stackrel{r}{\sim} \cO_{y}$, if there exists an $m\in\ZZ$
such that $\lim_{n\to\infty} \dX (S^{m+n}x,S^ny) = 0$. The right
asymptotic equivalence class of $\cO_x$ is denoted $[x]^{}_{r}$. A
right-asymptotic equivalence class will be called \emph{non-trivial}
if it does not consist of a single orbit.  Analogously, we define the
notion of \emph{left-asymptotic} orbits, with accompanying equivalence
relation $\stackrel{\ell}{\sim}$ and class $[x]^{}_{\ell}$. The
following connection with reversibility is clear.

\begin{fact}\label{fact:classes}
  Every reversor of a minimal two-sided shift\/ $(\XX, S)$ must map
  any non-trivial right-asymptotic class to a left-asymptotic one
  of the same cardinality, and vice versa.  \qed
\end{fact}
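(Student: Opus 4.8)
The plan is to combine the defining conjugation relation of a reversor with the fact that a homeomorphism of the compact space $\XX$ is uniformly continuous. Let $H$ be a reversor of $(\XX,S)$, so $HSH^{-1}=S^{-1}$ and hence, by a routine induction, $HS^{j}=S^{-j}H$ for all $j\in\ZZ$. In particular $H(\cO_{x})=\cO_{H(x)}$ for every $x\in\XX$, and since $H$ is a bijection of $\XX$ while the orbits partition $\XX$, the map $H$ induces a bijection on the set of $S$-orbits. Moreover, because $(\XX,\dX)$ is compact, $H$ is uniformly continuous, so $\dX(u_{n},v_{n})\to 0$ implies $\dX(H(u_{n}),H(v_{n}))\to 0$ for any sequences in $\XX$.

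First I would transport the right-asymptotic relation through $H$. If $\cO_{x}\stackrel{r}{\sim}\cO_{y}$, there is an $m\in\ZZ$ with $\dX(S^{m+n}x,S^{n}y)\to 0$ as $n\to\infty$. Applying $H$ and using uniform continuity together with $HS^{j}=S^{-j}H$ gives $\dX(S^{-(m+n)}H(x),S^{-n}H(y))\to 0$ as $n\to\infty$; substituting $k=-n$ turns this into $\dX(S^{-m+k}H(x),S^{k}H(y))\to 0$ as $k\to-\infty$, which is exactly $\cO_{H(x)}\stackrel{\ell}{\sim}\cO_{H(y)}$. Running the same computation with the limit taken as $n\to-\infty$ shows that $H$ also sends left-asymptotic pairs to right-asymptotic ones. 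Consequently, $H$ maps the class $[x]^{}_{r}$ into $[H(x)]^{}_{\ell}$ and the class $[x]^{}_{\ell}$ into $[H(x)]^{}_{r}$.

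To obtain equality of cardinalities rather than merely an inclusion, I would use that $H^{-1}$ is again a reversor: from $HSH^{-1}=S^{-1}$ one gets $H^{-1}SH=S^{-1}$ as well. Applying the previous step to $H^{-1}$ shows that $H^{-1}$ maps $[H(x)]^{}_{\ell}$ into $[H^{-1}(H(x))]^{}_{r}=[x]^{}_{r}$. Since $H$ and $H^{-1}$ act as mutually inverse bijections on orbits, the restriction $H\colon [x]^{}_{r}\to[H(x)]^{}_{\ell}$ is a bijection; in particular the two classes have the same cardinality, and $[x]^{}_{r}$ is non-trivial precisely when $[H(x)]^{}_{\ell}$ is. The reverse statement follows in the same way, using instead that $H$ carries $\stackrel{\ell}{\sim}$ to $\stackrel{r}{\sim}$.

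I do not expect a serious obstacle here; the only point demanding care is the reindexing $n\mapsto -n$, which is exactly what turns the limit $n\to\infty$ in the definition of $\stackrel{r}{\sim}$ into the limit $n\to-\infty$ in the definition of $\stackrel{\ell}{\sim}$ once $H$ has flipped the shift direction. One should also remember to invoke the reversor $H^{-1}$ for the equality of cardinalities, rather than attempting a direct surjectivity argument.
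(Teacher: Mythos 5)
Your proof is correct and is exactly the argument the paper has in mind: it states Fact~\ref{fact:classes} without proof as ``clear'', and the intended justification is precisely the combination of $HS^{j}=S^{-j}H$, uniform continuity of $H$ on the compact space $\XX$, and the reindexing $n\mapsto -n$ that you carry out, with the cardinality statement following from applying the same reasoning to the reversor $H^{-1}$. Nothing further is needed (note that minimality plays no role in the argument, consistent with your writeup).
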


We know from Theorem~\ref{thm:equicontinuous} that $\kappa (H^{2}) = 0$
for any reversor $H$. Moreover, one has the following result.

\begin{coro}\label{cor:reversorshuffle}
  Under the assumptions of Theorem~$\ref{thm:equicontinuous}$, one has
  the following properties.
\begin{enumerate}\itemsep=2pt
\item   Any symmetry that is the square of a reversor lies in the kernel
  of\/ $\kappa$.
\item If\/ $\kappa$ is injective on\/ $\cS (\XX)$, every reversor is an
  involution.
\item If the MEF of\/ $(\XX,S)$ is non-trivial and if\/ $\cS (\XX) =
  \langle S \ts \rangle \simeq \ZZ$, every reversor is an involution.
\end{enumerate}
\end{coro}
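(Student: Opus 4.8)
The plan is to derive all three items directly from Theorem~\ref{thm:equicontinuous}, in particular from the identity $\kappa(H^{2})=0$ for reversors and from the $1$-cocycle relation~\eqref{eq:cocycle}. For item (1): if $G\in\cS(\XX)$ can be written as $G=H^{2}$ with $H\in\cR(\XX)\setminus\cS(\XX)$ a reversor, then $\varepsilon(H)=-1$, so \eqref{eq:cocycle} gives $\kappa(H^{2})=\kappa(H)+\varepsilon(H)\kappa(H)=\kappa(H)-\kappa(H)=0$, i.e.\ $G\in\ker\kappa$. (Alternatively, this is restated in the line preceding the corollary.) This is the base observation on which the other two items rest.

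For item (2): suppose $\kappa$ is injective on $\cS(\XX)$, and let $H$ be any reversor. Then $H^{2}\in\cS(\XX)$, and by item (1) we have $\kappa(H^{2})=0=\kappa(S^{0})=\kappa(\id)$. Injectivity of $\kappa$ on $\cS(\XX)$ then forces $H^{2}=\id$, so $H$ is an involution. I would phrase this carefully: $\kappa$ is defined on all of $\cS(\XX)$, both $H^{2}$ and the identity lie in $\cS(\XX)$, and they have the same $\kappa$-image, so they coincide. (Recall reversors have even order by the general theory, so $H^{2}=\id$ genuinely means $\ord(H)=2$, not $\ord(H)=1$, as long as $H\notin\cS(\XX)$; and if there are no reversors at all the statement is vacuous.)

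For item (3): assume the MEF $(\AAA,T)$ is non-trivial and $\cS(\XX)=\langle S\rangle\simeq\ZZ$. The point is that under these hypotheses $\kappa$ is injective on $\cS(\XX)$, so that item (2) applies. Indeed, $\kappa(S^{n})=na$ where $a\in\AAA$ is the element implementing $T$, and $T$ has dense range by the standing assumption of Theorem~\ref{thm:equicontinuous}; in particular $\langle a\rangle$ is dense in $\AAA$. If $\AAA$ is non-trivial, then $a$ has infinite order in $\AAA$: were $a$ of finite order $k$, the cyclic group $\langle a\rangle$ would be finite, hence closed, hence all of $\AAA$ by density, making $\AAA$ a non-trivial finite group; but then the rotation $T$ would be periodic, contradicting faithfulness of the $S$-action (since $\phi$ intertwines $S$ and $T$, a relation $T^{k}=\id$ would give $\phi\circ S^{k}=\phi$, which need not immediately contradict faithfulness — so I should instead argue directly). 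Let me reorganize this point: the cleanest route is that faithfulness of the shift action together with $\cS(\XX)=\langle S\rangle\simeq\ZZ$ is not quite enough on its own, but non-triviality of the MEF gives an infinite equicontinuous factor, and an infinite monothetic group rotation with dense range has its generator $a$ of infinite order (a finite monothetic group is possible, but then the factor is finite — so "non-trivial" should be read as "infinite" here, or one argues that a finite MEF cannot separate the infinitely many points of $\XX$ enough; I will take the reading, standard in this context, that the relevant non-trivial MEF of an infinite minimal shift is infinite). Granting $a$ of infinite order, $\kappa(S^{n})=na=0$ forces $n=0$, so $\kappa$ is injective on $\langle S\rangle=\cS(\XX)$, and item (2) finishes the proof.

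I expect the main obstacle to be precisely the last point in item (3): pinning down why non-triviality of the MEF forces the generator $a$ to have infinite order, hence $\kappa$ injective on $\cS(\XX)$. One must either adopt the convention that "non-trivial MEF" means "infinite MEF" (so that the monothetic group $\AAA$ is infinite and its topological generator has infinite order), or supply the short argument that a finite MEF of a faithful infinite minimal shift would contradict either faithfulness or minimality. Everything else is a two-line consequence of the cocycle identity~\eqref{eq:cocycle} and of item~(1).
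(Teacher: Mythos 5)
Your proof is correct and follows essentially the same route as the paper: the identity $\kappa(H^{2})=0$ from the cocycle relation gives (1), injectivity of $\kappa$ on $\cS(\XX)$ gives (2), and (3) reduces to the observation that $H^{2}=S^{m}$ forces $m\ts\kappa(S)=0$. The only difference is that you make explicit the torsion point in (3) --- that a non-trivial (read: infinite) monothetic MEF forces the generator $a=\kappa(S)$ to have infinite order, so that $m\ts a=0$ really yields $m=0$ --- whereas the paper compresses this into the single assertion that $\kappa(S)\neq 0$ implies $m=0$; your version is, if anything, the more careful one.
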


\begin{proof}
  If $H$ is a reversor, $H^{2}$ is a symmetry, with $\kappa (H^{2})=0$
  by Theorem~\ref{thm:equicontinuous}. Any $G\in\cS (\XX)$ with $G =
  H^{2}$ for some reversor $H$ must then satisfy $\kappa (G)=0$.

  Now, if $\kappa$ is injective on $\cS (\XX)$, its kernel is trivial,
  so $H^{2} = \id$ for any reversor $H$. Similarly, if $\cS (\XX) =
  \langle S \ts \rangle$, one has $H^{2} = S^{m}$ for some $m\in\ZZ$,
  hence $0 = \kappa (H^{2}) = m\ts \kappa(S)$. When the MEF is
  non-trivial, we know that $\kappa (S) \ne 0$, hence $m=0$ and $H^{2}
  = \id$.
\end{proof}

\section{Binary shifts}\label{sec:binary}

Let us begin with the class of Sturmian shifts over the binary
alphabet $\cA = \{ a,b \}$. Following \cite{CH} and
\cite[Def.~4.16]{TAO}, we call a two-sided sequence $x \in \cA^{\ZZ}$
\emph{Sturmian} if it is nonperiodic, repetitive and of minimal
complexity, where the latter means that $x$ has word complexity 
$p(n)= n+1$ for all $n\in\NN_{0}$. The corresponding shift 
$\XX_{x}$ is the orbit closure of $x$ under the shift action, so
\[
      \XX_{x} \, = \, \overline{ \{ S_{\vphantom{I}}^{n} 
           (x) \mid n\in\ZZ \} }
\] 
with the closure being taken in the product topology, which is also
known as the local topology; compare \cite[p.~70]{TAO}. It is clear
from the definition that $\XX_{x}$ is minimal and aperiodic, hence the
shift action on it is faithful.

\begin{thm}\label{thm:Sturmian}
  Let\/ $x$ be an arbitrary two-sided Sturmian sequence, and\/
  $\XX_{x}$ the corresponding shift. Then,
  $\cS (\XX_{x}) = \langle S \ts \rangle \simeq \ts \ZZ$ is its
  symmetry group. Moreover, the shift is reversible, with\/
  $\cR (\XX_{x}) = \cS (\XX_{x}) \rtimes \langle R \ts \rangle \simeq
  \ts \ZZ \rtimes C_{2}$
  and\/ $R$ the reflection from Eq.~\eqref{eq:reflect}. In particular,
  all reversors are involutions.
\end{thm}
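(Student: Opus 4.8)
The plan is to establish the three assertions one after another, leaning on the MEF machinery of Theorem~\ref{thm:equicontinuous} and Corollary~\ref{coro:equicontinuous} for the symmetry group, and on Fact~\ref{fact:semi} for the reversing symmetry group. First I would recall the standard description of the MEF: for a two-sided Sturmian sequence $x$ of (necessarily irrational) slope $\alpha$, the shift $\XX_{x}$ is minimal and aperiodic, and its MEF is the irrational rotation $(\AAA,T)=(\RR/\ZZ,T)$ with $T(z)=z+\alpha$, where $\phi \! : \, \XX_{x}\to\AAA$ is the factor map induced by the usual coding of the rotation by a two-interval partition; see \cite{CH} and \cite[Ch.~4]{TAO}. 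The key structural fact is that $\phi$ is injective except over the single rotation orbit $\ZZ\alpha$ of the coding discontinuities, where it is exactly two-to-one. Hence the covering number of Corollary~\ref{coro:equicontinuous} is $c=1$, and the exceptional set is $D := \{ z\in\AAA \mid \card(\phi^{-1}(z))=2 \} = \ZZ\alpha$, which contains $0$.

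For the symmetry group, I would then apply Corollary~\ref{coro:equicontinuous}: part~(3) with $c=1$ says that $\kappa \! : \, \cS(\XX_{x}) \to \AAA$ is injective, while part~(1) with $d=2$ says that $D = D + \kappa(G)$ for every $G\in\cS(\XX_{x})$. Since $0\in D$, this forces $\kappa(G)\in D = \ZZ\alpha$, say $\kappa(G) = n\alpha = \kappa(S^{n})$ for some $n\in\ZZ$; injectivity of $\kappa$ now gives $G = S^{n}$. Hence $\cS(\XX_{x}) = \langle S\ts\rangle \simeq \ZZ$, the last isomorphism by faithfulness of the shift action. (Alternatively, the triviality of the Sturmian automorphism group may be quoted directly from \cite{Olli}.)

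For reversibility, I would first check that $\XX_{x}$ is reflection invariant. Reversal preserves both the word-complexity function and the letter frequencies, so $R(x)$ is again Sturmian of the same slope $\alpha$; as the Sturmian shift of a given irrational slope is unique and minimal, $\XX_{R(x)} = \XX_{x}$. Since $R$ is a homeomorphism of the full shift, $R(\XX_{x}) = R\bigl(\overline{\cO(x)}\bigr) = \overline{\cO(Rx)} = \XX_{R(x)}$, so $R(\XX_{x}) = \XX_{x}$, i.e.\ $R\in\Aut(\XX_{x})$. The shift action being faithful, $S$ is not an involution, so Fact~\ref{fact:semi} applies and yields $\cR(\XX_{x}) = \cS(\XX_{x}) \rtimes \langle R\ts\rangle \simeq \ZZ \rtimes C_{2}$ with $R$ an involutory reversor. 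Combining this with $\cS(\XX_{x}) = \langle S\ts\rangle \simeq \ZZ$, the final clause of Fact~\ref{fact:semi} shows that every reversor has the form $R\ts S^{m}$ for some $m\in\ZZ$, and the one-line identity $(R\ts S^{m})^{2} = (R\ts S^{m} R)\ts S^{m} = S^{-m}\ts S^{m} = \id$ confirms that all reversors are involutions; this last point also follows from Corollary~\ref{cor:reversorshuffle}(3), since the MEF is non-trivial.

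The only step that needs genuine care is the first one, namely pinning down the fibre structure of the MEF factor map precisely enough to identify $D = \ZZ\alpha$ and $c=1$; once that is in place, the rest is a short chain of deductions. The reflection invariance used in the reversibility part is classical --- it amounts to the mirror-invariance (palindromicity) of Sturmian languages --- and I would cite it rather than reprove it.
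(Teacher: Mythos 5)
Your proposal is correct, but it takes a noticeably different route from the paper on both halves. For the symmetry group, the paper simply cites the known result $\cS(\XX_{x})\simeq\ZZ$ from \cite[Sec.~4]{Olli} (alternatively \cite{CK,DDMP}), whereas you rederive it from scratch via the MEF machinery: identifying the MEF as the irrational rotation by the slope $\alpha$, noting that the factor map is one-to-one off the single orbit $\ZZ\alpha$ of doubly-coded points (so $c=1$), and then combining parts (1) and (3) of Corollary~\ref{coro:equicontinuous} to force $\kappa(G)\in\ZZ\alpha$ and hence $G=S^{n}$. This is exactly the technique the paper deploys for its later constant-length examples, so your argument is consistent with the paper's toolkit and has the advantage of being self-contained; its cost is that you must justify the fibre structure of the Sturmian coding, which you correctly flag as the one step needing care. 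For reflection invariance, the paper invokes palindromicity of Sturmian shifts via \cite[Prop.~6]{DP}, while you argue directly that reversal preserves complexity, aperiodicity, repetitivity and letter frequencies, so $R(x)$ is Sturmian of the same slope and generates the same minimal shift; both are standard and correct. The final step --- Fact~\ref{fact:semi} giving $\cR(\XX_{x})=\cS(\XX_{x})\rtimes\langle R\ts\rangle$ and the involutivity of all reversors (which you also correctly cross-check against Corollary~\ref{cor:reversorshuffle}(3)) --- coincides with the paper's.
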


\begin{proof}
  For any Sturmian sequence $x$, we know
  $\cS (\XX_{x}) \simeq \ts \ZZ$ from \cite[Sec.~4]{Olli};
  alternatively, see \cite[Cor.~5.7]{CK} or \cite[Thm.~3.1]{DDMP}.
  Moreover, $\XX_{x}$ is \emph{always} palindromic as a consequence of
  \cite[Prop.~6]{DP}, so we have reflection invariance in the form
  $R (\XX_{x}) = \XX_{x}$ with $R$ from Eq.~\eqref{eq:reflect}, and
  thus, by Fact~\ref{fact:semi},
  $\cR (\XX_{x}) \simeq \ts \ZZ \rtimes C_{2} $ as stated. The last
  claim also follows from Fact~\ref{fact:semi}.
\end{proof}

Next, we look into binary shifts that are generated by an aperiodic
primitive substitution.  Recall that a shift is called
\emph{aperiodic} if it does not contain any element with a non-trivial
period under the shift action.  A primitive substitution rule is
aperiodic when the shift defined by it is; compare
\cite[Def.~4.13]{TAO}.

\begin{example}\label{ex:Fibo}
  Consider the primitive substitution 
\[
    \theta^{}_{\mathrm{F}} : \quad 
    a\mapsto a b \ts , \, b\mapsto a \ts , 
\]    
which is known as the \emph{Fibonacci substitution}; see
\cite[Ex.~4.6]{TAO}. More generally, let $m\in \NN$ and consider the
primitive substitution
\[
   \theta^{}_{m} : \quad a\mapsto a^{m}  b \ts , \,  b\mapsto a \ts ,
\]   
which is a \emph{noble means substitution}, with $\theta^{}_{1} =
\theta^{}_{\mathrm{F}}$; compare \cite[Rem.~4.7]{TAO}. Each
$\theta^{}_{m}$ is aperiodic and defines a minimal shift $\XX_{m}$,
hence with faithful shift action. In fact, any such shift is
Sturmian, so Theorem~\ref{thm:Sturmian} gives us
\[
    \cS (\XX_{m}) \, = \, \langle S \ts \rangle
    \, \simeq \, \ZZ  \quad \text{and} \quad
   \cR (\XX_{\mathrm{F}}) \, = \, \langle S \ts \rangle
   \rtimes \langle R \ts \rangle \, \simeq \, \ZZ \rtimes C_{2}\ts ,
\]
with $R$ the reversor from Eq.~\eqref{eq:reflect}. This is the
simplest possible situation with reversibility; compare
Fact~\ref{fact:semi} and \cite[Thm.~1]{BR-Bulletin}.  In particular,
\emph{all} reversors are involutions.

Let us mention an independent way to establish reversibility via
palindromicity directly on the basis of the substitution, without
reference to \cite{DP}.
This is clear for $\theta^{}_{\mathrm{F}}$, which is
palindromic by Fact~\ref{fact:palin}. More generally, one can use
\cite[Prop.~4.6]{TAO} and observe that, for each $m$, the shift
$\XX_{m}$ can also be defined by a different substitution
$\theta^{\ts \prime}_{m}$ that is conjugate to $\theta^{}_{m}$. If
$m = 2\ell$, we use $a\mapsto a^{\ell} b a^{\ell}$ and $b\mapsto a$,
which satisfies the conditions of Fact~\ref{fact:palin} with $p$ the
empty word. If $m = 2\ell+1$, we use $a\mapsto a^{\ell+1} b a^{\ell}$
and $b\mapsto a$, so that Fact~\ref{fact:palin} applies with $p=a$ but
$q^{}_{b}$ empty.  Thus, all $\theta^{\ts \prime}_{m}$ are palindromic
and all $\XX_{m}$ reflection invariant.  

Finally, let us also mention that the noble means hulls can be
described as \emph{regular model sets}; see \cite[Rem.~4.7, Sec.~7.1
and Ex.~7.3]{TAO} for details. The corresponding cut and project
schemes are Euclidean in nature, both for the direct and the internal
space.  As a consequence, our results can alternatively be derived by
means of the corresponding \emph{torus parametrisation} \cite{BHP},
which adds a more geometric interpretation of the MEF in these cases.
\exend
\end{example} 

Let us comment on an aspect of constant length substitutions that can
simplify the determination of reversors. We shall use this property
later on.

\begin{rem}    
  Let $\theta$ be a primitive constant-length substitution, of
  \emph{height $h$} and \emph{pure base} $\theta'$ (see
  \cite{Dekking,Q} for background), with corresponding shift spaces
  $\XX=\XX_{\theta}$ and $\YY=\XX_{\theta'}$. If the height $h$ of
  $\theta$ is nontrivial, which means $h>1$, one can recover
  $\cR(\XX_\theta)$ from $\cR(\XX_{\theta'} )$. This is so because
  $\XX$ can be written as a tower of height $h$ over $\YY$, so $\XX$
  is conjugate to $\{ (y,i) \mid y\in\YY , \, 0 \leqslant i < h \}$.
  Now, the arguments of \cite[Prop.~3.5]{CQY} give the symmetries of
  $(\XX,S)$ and extend to show that any reversor of $(\XX, S)$ must be
  of the form
\[
    H^{}_{i}\ts (y,j) \, := \,
    \begin{cases}
       \bigl(H'(y), i-j \bigr)  , & 
              \text{if } i-j \geqslant 0 \ts , \\
       \bigl((T^{-1}H' )(x), i-j \bmod h\bigr)  , &
              \text{if } i-j < 0 \ts ,
\end{cases}
\]
for some $H' \in \cR(\YY)\setminus \cS(\YY)$ and some $0\leqslant i <
h$. In this sense, it suffices to analyse the height-$1$
substitutions.  \exend
\end{rem}

A height-$1$ substitution $\theta$ of constant length $r$ is said to
have \emph{column number} $c^{}_{\theta}$ if, for some $k\in \NN$ and
some position numbers $(i^{}_1, \ldots ,i^{}_k)$, one has
$\card \bigl(\theta_{i^{}_1}\!\circ \ldots
\circ\theta_{i^{}_k}(\mathcal A)\bigr)=c^{}_{\theta}$
with $c^{}_{\theta}$ being the least such number.\footnote{For shifts
  defined by substitutions of constant length, it follows from
  \cite{CQY} that $c^{}_{\theta} = c,$ where $c$ is the covering
  number from Corollary \ref{coro:equicontinuous}.} Here,
$\theta_{i} (a)$ with $1 \leqslant i \leqslant r$ and $a\in\cA$
denotes the value of $\theta(a)$ at position $i$.  Note that, if $\cA$
is a binary alphabet, one can use $k = 1$ in the above definition.  In
particular, if $\theta$ has column number $1$, we will say that
$\theta$ has a \emph{coincidence}. If $\theta_i$ acts as a permutation
on $\cA$ for each $1\leqslant i \leqslant r$, we call $\theta$
\emph{bijective}.  In the latter case, when on a binary alphabet
$\{ a,b\}$, $\theta$ commutes with the letter exchange
$\alpha \!  : \, a \leftrightarrow b$, wherefore the shift
$\XX_{\theta}$ is invariant under the LEM $F_{\alpha}$. Since
$F_{\alpha}$ commutes with the shift action, we have
$F_{\alpha} \in \cS(\XX_{\theta})$. The following result by Coven
\cite{Coven} is fundamental.  For convenience, we reformulate it in
the modern terminology of symbolic dynamics and substitution shifts
\cite{Q,TAO}.

\begin{lem}\label{lem:Coven}
  Let\/ $\theta$ be a primitive and aperiodic substitution rule of
  constant length over a binary alphabet, $\cA = \{ a,b \}$ say, and
  let\/ $\XX_{\theta}$ be the shift defined by it.  If\/
  $\theta$ has a coincidence, the symmetry group is\/
  $\cS (\XX_{\theta} ) = \langle S \ts \rangle \simeq \ts \ZZ$. If\/
  $\theta$ is a bijective substitution, the only additional symmetry
  is the LEM\/ $F_{\alpha}$, so\/
  $\cS (\XX_{\theta} ) = \langle S \ts \rangle \times \langle
  F_{\alpha} \ts \rangle \simeq \ts \ZZ \times C_{2}$.  \qed
\end{lem}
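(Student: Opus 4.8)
The plan is to reduce everything to the structure of right-asymptotic orbits together with the rigidity input from Corollary~\ref{coro:equicontinuous}, following Coven's original strategy but phrased in the language of the earlier sections. First I would recall that a primitive, aperiodic, constant-length substitution $\theta$ over $\cA=\{a,b\}$ has a nontrivial maximal equicontinuous factor, namely the $\varrho$-adic odometer for $\varrho$ the length of $\theta$ (tensored with the height, but here the height is $1$, so it is just the odometer), and the factor map $\phi$ is finite-to-one with minimal fibre cardinality $c$ equal to the column number $c^{}_{\theta}$; this is exactly the identification quoted in the footnote via \cite{CQY}. Over a binary alphabet, $c^{}_{\theta}\in\{1,2\}$: it is $1$ precisely when $\theta$ has a coincidence, and it is $2$ precisely when $\theta$ is bijective (these are the only two options since each column $\theta_{i}$ is a map $\{a,b\}\to\{a,b\}$, hence either constant for some $i$, giving a coincidence, or a permutation for every $i$, giving bijectivity).

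For the coincidence case I would argue as follows. Since $c=1$, Corollary~\ref{coro:equicontinuous}(iii) tells us that $\kappa$ is injective on $\cS(\XX_{\theta})$. Hence $\cS(\XX_{\theta})$ embeds into the odometer $\AAA$, which is an Abelian group in which $\kappa(S)$ generates a dense (hence infinite) subgroup; so it remains to show that the image of $\kappa$ is exactly $\langle\kappa(S)\rangle\simeq\ZZ$, i.e.\ that every symmetry agrees with some power of $S$ on the MEF and therefore, by injectivity of $\kappa$, equals that power of $S$. This is where I would invoke the structure of the unique point $z_{0}\in\AAA$ (or the finite set of points) over which the fibre has size $>1$, together with the fact that, when $c=1$, a symmetry $G$ with $\kappa(G)=0$ must fix each fibre setwise and commute with $S$, and an asymptotic-orbit argument (Fact~\ref{fact:classes}-style, but for symmetries) forces $G=\id$; this is essentially \cite[Thm.~3.3 and Prop.~3.5]{CQY}, applied with $c=1$, and I would cite it rather than reprove it. Alternatively, one can quote Coven's theorem \cite{Coven} directly, which is really the point of the Lemma being attributed to him.

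For the bijective case I would first observe that $F_{\alpha}\in\cS(\XX_{\theta})$ and that $\langle S\rangle\cap\langle F_{\alpha}\rangle=\{\id\}$ since $F_{\alpha}$ has order $2$ while $S$ has infinite order and no power of $S$ is the letter exchange (a power of $S$ fixes letters, $F_{\alpha}$ does not); as $F_{\alpha}$ commutes with $S$ this gives a subgroup $\langle S\rangle\times\langle F_{\alpha}\rangle\simeq\ZZ\times C_{2}$ inside $\cS(\XX_{\theta})$. For the reverse inclusion I would again use Corollary~\ref{coro:equicontinuous}: now $c=2$, so $\kappa$ is at most $2$-to-$1$ on $\cS(\XX_{\theta})$, and $F_{\alpha}$ lies in the kernel of $\kappa$ (it commutes with $S$ and, because $\theta$ is bijective, it descends to the identity on the odometer—this needs the small check that $\phi\circ F_{\alpha}=\phi$, which follows since $F_{\alpha}$ permutes the two points of each maximal fibre and fixes all singleton fibres, hence induces the identity on $\AAA$). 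So $\ker\kappa=\{\id,F_{\alpha}\}$ exactly, and $\cS(\XX_{\theta})/\ker\kappa$ embeds in $\AAA$; the image is again $\langle\kappa(S)\rangle\simeq\ZZ$ by the same asymptotic argument as before (a symmetry with $\kappa(G)=n\kappa(S)$ satisfies $S^{-n}G\in\ker\kappa$, so $G\in\{S^{n},S^{n}F_{\alpha}\}$). Putting the two pieces together yields $\cS(\XX_{\theta})=\langle S\rangle\times\langle F_{\alpha}\rangle$.

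The main obstacle, and the step I would lean hardest on the cited literature for, is the claim that the image of $\kappa$ on $\cS(\XX_{\theta})$ is no larger than $\langle\kappa(S)\rangle$, equivalently that a symmetry inducing the identity on the MEF (modulo $F_{\alpha}$ in the bijective case) must itself be a power of $S$ (times $F_{\alpha}$). Concretely this requires controlling the finitely many non-singleton fibres of $\phi$ and the right-asymptotic orbit classes they carry, and checking that any fibre-preserving self-conjugacy of the shift acting trivially on $\AAA$ is forced to be trivial (or the letter exchange). This is exactly the content of Coven's analysis \cite{Coven}, recast in \cite[Sec.~3]{CQY}; rather than reproduce it, I would state the Lemma as a reformulation and cite those sources, which is consistent with the sentence preceding the Lemma in the text.
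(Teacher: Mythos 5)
Your proposal is essentially correct, but note that the paper itself offers no proof of this Lemma: it is stated as a reformulation of Coven's theorem \cite{Coven} and closed without argument, so there is no in-paper proof to match. What you have written is a faithful reconstruction using the MEF machinery of Corollary~\ref{coro:equicontinuous}, and it is in fact exactly the strategy the paper deploys later for the analogous statements (the proof of Theorem~\ref{thm:cyclic-TM} and the treatment of the gTM/gpd family): identify the MEF as the odometer for the length of $\theta$, observe that over a binary alphabet $c = c^{}_{\theta} \in \{1,2\}$ with the dichotomy coincidence/bijective, use part (1) of the Corollary together with the computation $\{ z \in \AAA \mid \card(\phi^{-1}(z)) > c \} = \ZZ$ to force $\kappa(G) \in \ZZ = \kappa(\langle S \ts \rangle)$, and use part (3) to bound $\ker \kappa$ by $c$ elements. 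So your route buys an actual argument (modulo the cited rigidity input from \cite{Coven} and \cite[Sec.~3]{CQY}), where the paper simply cites Coven. Two small imprecisions, neither fatal: the set of points of $\AAA$ carrying a non-singleton fibre is countably infinite (a union of finitely many $\ZZ$-orbits), not finite or a single point --- it is the set of such \emph{orbits} that is finite; and in the bijective case there are no singleton fibres at all, so the correct reason that $\kappa(F_{\alpha}) = 0$ is that $F_{\alpha}$ permutes \emph{every} fibre of $\phi$ and hence induces the identity on $\AAA$ (equivalently, it fixes the supertile address of the origin). Your explicit deferral of the crux --- that the image of $\kappa$ on $\cS(\XX_{\theta})$ is exactly $\langle \kappa(S) \rangle$ --- to the literature is consistent with the paper's own practice here.
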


Let us now turn our attention to substitution shifts outside the
Sturmian class.  So far, all reversible examples were actually
reflection invariant shifts. This is deceptive in the sense that
one can have reversibility without reflection invariance.

\begin{example}\label{ex:flip-rev}
  Consider the primitive substitution rule
\[
     \theta : \quad a \mapsto aaba \ts, \; b \mapsto babb \ts ,
\]  
which is aperiodic, of constant length and height $1$; see
\cite{Dekking,Q} for background.  Since $\theta$ has a coincidence (in
the second and third position), it defines a strictly ergodic shift
$\XX_{\theta}$ with pure point dynamical spectrum. Consequently,
$\XX_{\theta}$ is one-to-one almost everywhere over its MEF, and we
also have a description as a regular model set; compare
Example~\ref{ex:pd} below. The symmetry group is minimal,
$\cS (\XX_{\theta}) = \langle S \ts \rangle \simeq \ts \ZZ$, the
latter as a consequence of Lemma~\ref{lem:Coven}.
  
Now, it is easy to see that $bbaa$ is a legal word, while $aabb$ is
not, so $\XX_{\theta}$ fails to be reflection invariant, and neither
$R$ from Eq.~\eqref{eq:reflect} nor $R^{\ts \prime}$ from
Eq.~\eqref{eq:def-alt-R} will be a reversor for $(\XX_{\theta},S)$. As
mentioned above, $F_{\alpha}$ with $\alpha$ the letter exchange
$a \leftrightarrow b$ (which is an element of the permutation group
$\varSigma^{}_{\! \cA}$ of the alphabet) is an LEM on the full shift.
However, $F_{\alpha}$ does not map $\XX_{\theta}$ into itself, as can
be seen from the same pair of words just used to exclude
$R^{\ts \prime}$.

Recall that $R^{\ts \prime}$ and $F_{\alpha}$ commute within
$\Aut (\XX_{\cA})$, and that
$R^{\ts\prime}_{\alpha} = F^{}_{\alpha} \ts R^{\ts \prime}$ is a reversor for the
full shift.  Now, $\theta$ has $4$ bi--infinite fixed points, with
legal seeds $a|a$, $a|b$, $b|a$ and $b|b$. One can check that
$R^{\ts\prime}_{\alpha}$ fixes those with seeds $a|b$ and $b|a$ individually,
while permuting the other two. Via a standard orbit closure argument,
this implies that $R^{\ts\prime}_{\alpha}$ maps $\XX^{}_{\theta}$ into itself,
and is an involutory reversor for $(\XX_{\theta},S)$.  We thus get
\[
     \cR (\XX^{}_{\theta}) \, = \, \langle S \ts \rangle \rtimes 
     \langle R^{\ts\prime}_{\alpha} \ts \rangle
     \, \simeq \, \ZZ \rtimes C_{2} \ts ,
\]  
which is an example of reversibility with a non-standard
involutory reversor, but still with the `standard' structure
of $\cR(\XX_{\theta})$ that we know form Fact~\ref{fact:semi}.
In particular, all reversors are once again involutions in this 
case.   \exend
\end{example}

Quite frequently, we need to check whether an LEM generates a symmetry
or contributes to a reversing symmetry in the sense of
Eq.~\eqref{eq:perm-rev}. To this end, the following criterion is
handy.  Its proof is a generalisation of \cite[Props.~3.19 and
3.21]{CQY}, and the remark following Corollary~3.20 in that same
article. We use the map $\kappa$ defined in Theorem
\ref{thm:equicontinuous}.

We say that $\theta$ is \emph{strongly injective} if $\theta$ is
injective on letters and does not have any right-infinite fixed points
which differ only on their initial entry, or any left-infinite fixed
points which differ only on their initial entry. A form of the
following lemma for symmetries has been formulated and proved in joint
preliminary work by the third author and A.~Quas; the authors thank
him for the permission to reproduce its proof.

\begin{lem}\label{lem:necessary}
  Let \/ $\theta$ be a primitive substitution of constant length\/ $r$
  with height\/ $1$ and column number\/ $c^{}_{\theta}$.  Suppose
  that\/ $\theta$ is strongly injective. Then, any map\/
  $G\in \cS (\XX_{\theta})$ with\/ $\kappa (G) = 0$ must have radius\/
  $0$, so that\/ $G=F_{\alpha}$ for some\/
  $\alpha\in \varSigma^{}_{\! \cA}$, the permutation group of\/ $\cA$.
  Likewise, if\/ $H\in\cR (\XX_{\theta})$ is a reversor with\/
  $\kappa (H)=0$, it must be of the form\/
  $G=F_{\alpha} \ts R^{\ts \prime}$.

  Moreover, with\/ $\cA^{2}_{\theta}$ denoting the set of\/
  $\theta$-legal words of length\/ $2$, 
   a permutation\/ $\alpha \in \varSigma^{}_{\! \cA}$  generates 
   an LEM\/ $F_{\alpha}  \in      \cS(\XX_{\theta})$ if and only if
\begin{enumerate}\itemsep2pt
   \item the permutation\/ $\alpha$ maps\/ $\cA_{\theta}^{2}$ to\/ 
        $\cA_{\theta}^{2}$, and
   \item $\bigl(\alpha \circ \theta^{c^{}_{\theta}!}\bigr)(ab) =
     \bigl(\theta^{c^{}_{\theta}!} \circ \alpha\bigr) (ab)$ for each $ab
     \in \cA_{\theta}^{2}$.
\end{enumerate}
Similarly, a permutation\/ $\alpha$ generates a reversor\/
$G= F_\alpha R^{\ts \prime} \in \cR(\XX_{\theta})\setminus
\cS(\XX_{\theta})$ if and only if
\begin{itemize}\itemsep2pt
\item[(3)] $\ts ab \in \cA_{\theta}^{2}$ implies\/
  $\alpha(ba) \in \cA_{\theta}^{2}$, and
\item[(4)]
  $\bigl(\alpha \circ \theta^{c^{}_{\theta}!}\bigr)(ab) =
  \bigl(\theta^{c^{}_{\theta}!} \circ \alpha\bigr) (ba)$
  for each\/ $ab \in \cA_{\theta}^{2}$.
\end{itemize}
\end{lem}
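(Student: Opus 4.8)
The plan is to treat the four equivalences in turn, using
Proposition~\ref{prop:CHL} to bound radii, Theorem~\ref{thm:equicontinuous}
to pin down the action on the MEF, and the column-number structure of a
constant-length substitution to promote local information on length-$2$
words to a genuine symmetry or reversor. First I would establish the two
"radius $0$" claims. If $G\in\cS(\XX_\theta)$ has $\kappa(G)=0$, then by
Theorem~\ref{thm:equicontinuous} the induced map $G^{}_{\!\AAA}$ on the MEF
is the identity, so $G$ preserves each fibre $\phi^{-1}(z)$. For a
height-$1$ constant-length substitution the MEF is the associated odometer
(the $r$-adic integers, with $r$ the length), and the combinatorial content
of strong injectivity is precisely that the local rule of $G$ cannot use
any neighbouring coordinate: if $G$ had radius $\geqslant 1$, applying
$\theta$ repeatedly and using that $G$ commutes with $S$ and fixes odometer
fibres would force two distinct one-sided fixed points of a power of
$\theta$ differing only in their initial entry, contradicting strong
injectivity. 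Hence $G$ has radius $0$, i.e.\ $G=F_\alpha$ for a unique
$\alpha\in\varSigma^{}_{\!\cA}$. The reversor case is the mirror image:
write $H=H'R^{\ts\prime}$ with $H'$ a sliding block map as in the remark
after Proposition~\ref{prop:CHL}; since $\kappa(H)=0$, the induced map on
the MEF is $z\mapsto -z$, which is exactly what $R^{\ts\prime}$ already
does, so $H'$ fixes odometer fibres and the same strong-injectivity
argument forces $H'=F_\alpha$, giving $H=F_\alpha R^{\ts\prime}$.

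Next I would prove the symmetry criterion (1)--(2). Necessity is immediate:
if $F_\alpha\in\cS(\XX_\theta)$, it maps legal words to legal words, so in
particular length-$2$ legal words, giving (1); and since $F_\alpha$
commutes with the substitution operator induced on the shift (constant
length means $\theta$ acts on $\XX_\theta$, up to the shift, and $F_\alpha$
commutes with $S$), one reads off that $\alpha$ intertwines
$\theta^{c^{}_\theta!}$ with itself on $2$-blocks, which is (2). For
sufficiency, the key point is that $\theta^{c^{}_\theta!}$ has column
number equal to $c^{}_\theta$ as well (taking the factorial kills the
transient behaviour of the column numbers, so the column-number set
stabilises), and for a substitution whose column number is already
minimal, a map that is consistent with $\theta^{c^{}_\theta!}$ on all legal
$2$-blocks and sends legal $2$-blocks to legal $2$-blocks extends
consistently to all legal words, hence to a homeomorphism of $\XX_\theta$
commuting with $S$; this is exactly the mechanism of
\cite[Props.~3.19--3.21]{CQY}, and I would invoke it after checking that
the hypotheses transfer verbatim. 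The reversor criterion (3)--(4) is then
obtained by composing with $R^{\ts\prime}$: a map $F_\alpha R^{\ts\prime}$
reverses $2$-blocks and then applies $\alpha$, so "legal $\mapsto$ legal"
becomes the condition that $ab$ legal implies $\alpha(ba)$ legal, which is
(3), and the intertwining identity (2) acquires a reversal on one side,
becoming (4). One must also note that $F_\alpha R^{\ts\prime}$ genuinely
lies outside $\cS(\XX_\theta)$, which holds because $R^{\ts\prime}$
conjugates $S$ to $S^{-1}$ and the shift action is faithful, so no power of
$S$ can equal it.

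I expect the main obstacle to be the sufficiency direction of (1)--(2)
(and hence (3)--(4)): turning local compatibility on $2$-blocks into a
globally well-defined self-homeomorphism. The subtle issue is that a
priori $\alpha$ respecting $2$-blocks and commuting with
$\theta^{c^{}_\theta!}$ on $2$-blocks might fail to respect longer legal
words; the resolution uses that, because the column number is minimal
after passing to $\theta^{c^{}_\theta!}$, every legal word is "seen" inside
an image $\theta^{c^{}_\theta!}(ab)$ of a legal $2$-block in a way that is
controlled by the recognizability / unique desubstitution of primitive
aperiodic constant-length substitutions. I would therefore isolate this as
the technical heart, citing recognizability (e.g.\ via
\cite{Dekking,Q,TAO}) together with the column-number stabilisation, and
structure the proof so that the generalisation of
\cite[Props.~3.19--3.21]{CQY} is applied as a black box once its
hypotheses---strong injectivity, height $1$, primitivity, constant
length---are verified in our setting. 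The computation that
$\theta^{c^{}_\theta!}$ and $\theta$ have the same column number, and that
$c^{}_\theta$ from the definition coincides with the covering number $c$ of
Corollary~\ref{coro:equicontinuous}, is routine given the footnote and
\cite{CQY}, and I would relegate it to a short paragraph rather than
belabour it.
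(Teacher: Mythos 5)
Your proposal follows essentially the same route as the paper's proof: use the CQY machinery (Props.~3.19, 3.21 and 3.25 of \cite{CQY}) to bound the radius of a $\kappa$-trivial symmetry or reversor, derive a contradiction with strong injectivity by iterating $\theta$ on a fixed point and producing two one-sided fixed points that differ only in their initial entry, handle the reversor by factoring through $R^{\ts\prime}$ and the reflected substitution, and delegate the combinatorial criteria (1)--(4) to \cite[Prop.~3.21]{CQY} with the $\kappa$-value set to zero. The details you leave implicit (the radius-at-most-one bound before the asymptotic argument, and the passage to $\XX_{\bar\theta}$ for the reversor case) are exactly the steps the paper fills in, so the plan is sound.
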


\begin{proof}
  By replacing $\theta$ by a power if necessary, we assume that all
  $\theta$-periodic points are fixed, and that, for any letter $a$,
  $\theta(a)$ begins (ends) with a letter $p$ such that $\theta(p)$
  begins (ends) with $p$.  The \emph{interior} of a word
  $w=w^{}_{0} \ts w^{}_{1} \dots w^{}_{n-1} w^{}_{n}$, with $n>1$ to
  avoid pathologies, is defined as the word
  $w^{\circ}=w^{}_{1} \dots w^{}_{n-1}$.  Given any substitution
  $\theta$, the \emph{reverse} substitution $\bar{\theta}$ is defined
  by reflection: If $\theta(a)= p^{}_{1}\dots p^{}_{k}$ for $a\in\cA$,
  then $\bar{\theta} (a) = p^{}_{k}\dots p^{}_{1}$. If $\theta$ is a
  primitive, strongly injective substitution of constant length with
  height\/ $1$ and column number\/ $c^{}_{\theta}$, then so is
  $\bar{\theta}$. Note that
  $ R^{\ts \prime} \! : \, \XX^{}_{\theta} \xrightarrow{\quad}
  \XX_{\bar{\theta}}$
  maps $\theta$-fixed points to $\bar{\theta}$-fixed points.
  \smallskip
 
  Suppose that $\theta$ satisfies the stated conditions, and let
  $G \in \cS(\XX_{\theta})$ with $\kappa(G)=0$. Then, $G$ must send
  $\theta$-fixed points to $\theta$-fixed points, as follows from
  \cite[Prop.~3.19]{CQY} and the remark following Corollary~3.20 in
  the same article.  Now, let $u$, $v$ be distinct fixed points of
  $\theta$, and suppose that $G(u)=v$.  If $G\in \cS(\XX_{\theta})$
  has minimal radius $> 0$, it has left and right radius at most $1$
  by \cite[Prop.~3.19]{CQY}. Suppose (without loss of generality) that
  it has nontrivial right radius. Thus, for some letter $c$, there are
  two occurrences of $c$ in $u$ which are mapped to different letters
  in $v$. In other words, there are letters $d$ and $e$, with
  $d\ne e$, and indices $i\ne j$ such that $u_i = c = u_j$ together
  with $v_i =d$ and $v_j =e$. Due to repetitivity, we may assume that
  $i$ and $j$ are both positive.

  Since $u$ is a fixed point, this implies that, for all $n>0$, the
  word $\theta^n(c)$ appears starting at the indices $r^ni$ and $r^nj$
  in $u$, and then the interior of $G (\theta^n(c))$ appears starting
  at the indices $r^ni +1$ and $r^nj+1$ in $v$.  But, since $v$ is a
  fixed point, $\theta^n(d)$ appears starting at index $r^ni$ in $v$,
  while $\theta^n(e)$ appears starting at index $r^nj$ in $v$. So, for
  all $n$, $(\theta^{n} (d))^\circ = (\theta^{n} (e))^\circ$ and we
  thus have two right-infinite fixed points $u'= d^{\ts\prime} x$ and
  $v'= e' x$ which disagree on their initial entry,
  $d^{\ts\prime}\ne e'$, and which agree on their right rays starting
  at index $1$. This contradicts our assumption that $\theta$ is
  strongly injective.\smallskip
  
  If $H$ is a reversor with $\kappa (H)=0$, it must be of the form
  $H = H' \circ R^{\ts \prime}$ with $H'=H R^{\ts \prime}$ a sliding
  block map of radius at most $1$.  This follows by an application of
  \cite[Prop.~3.25]{CQY} to the sliding block map
  $ H' \! : \, \XX_{\bar{\theta}}\xrightarrow{\quad} \XX^{}_{\theta}$.
  If $\kappa (H)=0$, then $H$ maps $\theta$-fixed to $\theta$-fixed
  points, whence $H'=HR^{\ts \prime} $ maps $\bar{\theta}$-fixed
  points to $\theta$-fixed points.  We can now apply our previous
  argument to $H R^{\ts \prime}$, with the corresponding conclusion.

  Finally, an application of the arguments from the proof of
  \cite[Prop.~3.21]{CQY} gives us the remaining statements, where our
  situation is actually simpler. Since we assume that our automorphism
  (or reversor) has $\kappa$-value $0$, we take $k=0$ in the statement
  of the cited proposition. Also, from above, an automorphism of a
  strongly injective substitution shift has zero radius, and not
  radius $1$, which is what is assumed there.  Thus, conditions (1)
  and (2) of \cite[Prop.~3.21]{CQY} are those that we have to verify,
  the first of which is replaced with our condition (1) above, while
  our commutativity condition (2) is equivalent to (2) in
  \cite[Prop.~3.21]{CQY}. Finally, our statements (3) and (4), which
  apply to reversors, follow after one has initially applied the
  reflection map.
\end{proof}

A minimal shift need not be reversible at all, as we show next. 

\begin{example}\label{ex:non-rev}
Consider the primitive constant-length substitution
\[
    \theta : \quad a \mapsto aba \ts , \, b \mapsto baa \ts ,
\]
which is aperiodic and has a coincidence, so
$\cS (\XX_{\theta}) = \langle S \ts \rangle \simeq \ZZ$ by
Lemma~\ref{lem:Coven}.  Since $aaabaa$ is legal while $aabaaa$ is not,
we have no reflection invariance of $\XX_{\theta}$, so
$R\not\in\Aut (\XX_{\theta})$.  As $a$ is twice as frequent as $b$ in
any $x\in\XX_{\theta}$, which follows from the substitution matrix by
standard Perron--Frobenius theory, $F_{\alpha}$ (defined as in
Example~\ref{ex:flip-rev}) is not in $\Aut (\XX_{\theta})$, and
neither can be $R_{\alpha} = F_{\alpha} R$ or
$R^{\ts\prime}_{\alpha} = F^{}_{\alpha} R^{\ts \prime}$.  Since $\theta$ satisfies the
conditions of the Lemma~\ref{lem:necessary}, any reversor with
$\kappa$-value $0$ must indeed be of the form $R^{\ts\prime}_{\alpha}$. Hence,
there are no reversors with $\kappa$-value zero.

Finally, the only possible $\kappa$-values for a reversor are
integers. To see this, we use Part (2) of
Corollary~\ref{coro:equicontinuous}. To continue, let $\ZZ_n$ denote
the  $n$-adic integers. We now employ the MEF of $\XX_{\theta}$, which
is the ternary odometer $(\ZZ_{3}, +1)$; for a brief but complete
description for our purposes, we refer to \cite[Secs.~3.1 and 3.2 as
well as Thm.~3.6]{CQY}. Note that $\ZZ_{3}$ can canonically be
identified with the ring of $3$-adic integers (hence our notation),
which, in turn, can be used as internal space in a model set
description of $\XX_{\theta}$; compare Example~\ref{ex:pd} below.

Note that $ \{z \in \ZZ_{3} \mid \card(\phi^{-1}(z)) > 1\}$ is the set
of points whose entries (in its tail) eventually all lie in
$\{ 0,1\}$, and it can be verified that
\[
   \{z \in \ZZ_{3} \mid \card (\phi^{-1}(z)) > 1\} \, = \,
    t  - \{z \in \ZZ_{3} \mid \card(\phi^{-1}(z))  > 1\}  
\]
is only possible if $t$ is either eventually $0$, or eventually $2$,
that is, if $t$ is the $3$-adic expansion of an integer. So, by
Corollary~\ref{coro:equicontinuous}, if any reversor were to exist,
there would then also be one with $\kappa$-value $0$, which we have
already excluded. Thus, there are no reversors, and $\cR (\XX)
 = \cS (\XX)$.

Likewise, by completely analogous arguments,
$\theta \! : \, a\mapsto aabba \ts , \, b\mapsto babbb$ has a minimal
symmetry group.  As in the previous example, now with
$\AAA = \ZZ_{5}$, one can verify that no reversor $R^{\ts\prime}_{\alpha}$
satisfies the conditions of Lemma~\ref{lem:necessary}, and one can use
Part (2) of Corollary~\ref{coro:equicontinuous} to check that no
non-integer $\kappa$-value is possible.  \exend
\end{example}

Let us next mention a simple example that is neither
Sturmian nor of constant length.

\begin{example}
  The primitive substitution
\[
   \theta : \quad a \mapsto aab \ts , \; b \mapsto ba \ts ,
\]
emerges from the square of the Fibonacci substitution of
Example~\ref{ex:Fibo} by interchanging the letters in the image of
$b$. It is not Sturmian because
$\cA^{2}_{\theta} = \{ aa, ab, ba, bb \}$, so the complexity is
non-minimal. The shift has pure point spectrum, so is almost
everywhere one-to-one over its MEF, which is an irrational rotation in
this case. Like for the Fibonacci shift from Example~\ref{ex:Fibo},
there is once again a description as a regular model set.  An argument
similar to the one used for the constant-length cases, using
Corollary~\ref{coro:equicontinuous}, shows that the symmetry group is
the minimal one, $\cS (\XX_{\theta}) \simeq \ZZ$.

Moreover, one can also rule out reversibility, hence
$\cR (\XX_{\theta}) = \cS (\XX_{\theta})$ in this case.  This also
shows that the substitution matrix alone, which is the same for this
example and for the square of the Fibonacci substitution, does not
suffice to decide upon reversibility --- the actual order of the
letters matters.  \exend
\end{example}

All examples so far are \emph{deterministic}, hence with zero
topological entropy. Since shifts of finite type generally have large
symmetry groups, it might be tempting to expect the rigidity
phenomenon only for shifts of low complexity. However, it is
well-known \cite{BK} that also shifts with positive entropy can have
minimal symmetry groups. Let us add a recent example that emerges from
a natural number-theoretic setting.

\begin{example}\label{ex:b-free}
  An integer $n$ is called \emph{square-free} if $n$ is not divisible
  by a non-trivial square, so $1,2,3,5,6$ are square-free while
  $4,8,9,12,16,18$ are not. Clearly, $n$ is square-free if and only if
  $-n$ is. The characteristic function of the square-free integers is
  given by $\mu (\lvert n \rvert )^{2}$, where $\mu$ is the M\"{o}bius
  function from elementary number theory, with $\mu (0) := 0$.  Now,
  take the bi-infinite $0$-$1$-sequence
  $x^{}_{\mathrm{sf}} = \bigl(\mu (\lvert n \rvert
  )^{2}\bigr)_{n\in\ZZ}$
  and define a shift $\XX_{\mathrm{sf}}$ via orbit closure. This is
  the \emph{square-free shift} (or flow), which is known to have pure
  point diffraction and dynamical spectrum \cite{BMP,CS}.  In fact,
  $\XX_{\mathrm{sf}}$ is an important example of a dynamical system
  that emerges from a \emph{weak model set} \cite[Sec.~10.4]{TAO}. As
  such, it is a special case within the larger class of $\cB$-free
  shifts; see \cite{Mentzen} and references therein, and
  \cite{Mariusz} for general background.

  Now, it is shown in \cite{Mentzen} that $\XX_{\mathrm{sf}}$ has
  minimal symmetry group, and it is also clear that
  $\XX_{\mathrm{sf}}$ is reflection invariant (because
  $R( x^{}_{\mathrm{sf}} ) = x^{}_{\mathrm{sf}}$), so we are once
  again in the standard situation of Fact~\ref{fact:semi} with
  $\cR (\XX_{\mathrm{sf}}) = \langle S \ts \rangle \rtimes \langle R
  \ts \rangle \simeq \ZZ \rtimes C_{2}$.
  More generally, any $\cB$-free shift $\XX^{}_{\cB}$ is reflection 
  invariant, so we always get $\cR (\XX^{}_{\cB} ) = \cS (\XX^{}_{\cB} )
  \rtimes \langle R \ts \rangle$. We refer to \cite{Mentzen} for the
  general conditions when $\cS (\XX^{}_{\cB}) = \langle
    S \ts \rangle$ holds.  \exend
\end{example}

Let us now proceed with our analysis of some of the classic examples,
including cases with mixed spectrum, hence cases which are multiple
covers of their MEFs.

\begin{example}\label{ex:pd}
  Consider the primitive substitution rule 
\[
    \theta^{}_{\mathrm{pd}} : \quad
     a\mapsto ab \ts , \, b\mapsto aa \ts , 
\]  
which is known as the \emph{period doubling} substitution; see
\cite[Sec.~4.5.1]{TAO} and references given there. It is aperiodic,
has constant length (with height $1$) and a coincidence in the first
position.  So, the corresponding shift $\XX_{\mathrm{pd}}$ is minimal
and has faithful action. Also, it has pure point dynamical spectrum;
compare \cite{Q}. It can be described as a regular model set with
internal space $\ZZ_{2}$, see \cite[Ex.~7.4]{TAO}, and thus possesses
a torus parametrisation with the compact group $\TT=\ZZ_{2} / \ZZ$.

By an application of Fact~\ref{fact:palin}, we know that
$\XX_{\mathrm{pd}}$ is palindromic, and hence reflection
invariant. So, $\cR (\XX_{\mathrm{pd}}) = \cS (\XX_{\mathrm{pd}})
\rtimes \langle R \ts \rangle$ by Fact~\ref{fact:semi}. Moreover, also
in this case, the symmetry group simply is $ \cS (\XX_{\mathrm{pd}}) =
\langle S \ts \rangle \simeq \ts \ZZ$, again as a result of
Lemma~\ref{lem:Coven}.  Consequently, \emph{all} reversors are
involutions.  \exend
\end{example}

\begin{example}\label{ex:TM}
  Related to Ex.~\ref{ex:pd} is the famous Thue--Morse substitution
  \cite{AS,TAO}
\[  
    \theta^{}_{\mathrm{TM}} : \quad
    a\mapsto ab \ts , \, b \mapsto ba \ts .
\] 
This is a primitive, bijective substitution of constant length, hence
we get an extra symmetry of the shift $\XX^{}_{\mathrm{TM}}$, namely
the LEM $F_{\alpha}$ from Example~\ref{ex:flip-rev}. Due to the
aperiodicity of $\XX^{}_{\mathrm{TM}}$, we thus have
\[
    \cS (\XX^{}_{\mathrm{TM}}) \, = \, \langle S \ts \rangle \times
    \langle F_{\alpha} \ts \rangle \, \simeq \, \ZZ \times C_{2}
\]
by Lemma~\ref{lem:Coven}.  Since $\XX^{}_{\mathrm{TM}}$ contains a
palindromic fixed point of $\theta^{\ts 2}_{\mathrm{TM}}$ with seed
$a|a$ (even core case), Fact~\ref{fact:semi} applies and gives
$\cR (\XX^{}_{\mathrm{TM}}) \simeq (\ZZ \times C_{2}) \rtimes C_{2}$,
where the generating involutions, $F_{\alpha}$ and $R^{\ts \prime}$,
commute.  We can thus alternatively write the reversing symmetry group
as
\[
   \cR (\XX^{}_{\mathrm{TM}}) \, \simeq \, ( \ZZ \rtimes
  C_{2}) \times C_{2} \, = \, D_{\infty} \times C_{2} \ts ,
\]
where $D_{\infty}$
denotes the infinite dihedral group. Also in this case, \emph{all}
reversors are involutions; see \cite[Thm.~2 (1)]{BR-Bulletin}.  

Let us also recall that $(\XX^{}_{\mathrm{TM}}, S)$ has mixed spectrum
\cite{Q}. In fact, $\XX^{}_{\mathrm{TM}}$ is a globally $2:1$
extension of $\XX_{\mathrm{pd}}$, compare \cite[Thm.~4.7]{TAO}, and
thus a.e.  $2:1$ over its MEF, which is a binary odometer in this
case, denoted by $(\ZZ_{2}, +1)$. Analogously to our earlier
Example~\ref{ex:non-rev}, $\ZZ_2$ can be viewed as the ring of $2$-adic
integers.  Concretely, the factor map
$\phi \! : \, \XX^{}_{\mathrm{TM}} \xrightarrow{\quad}
\XX_{\mathrm{pd}}$
is defined as a sliding block map acting on pairs of letters, sending
$ab$ and $ba$ to $a$ as well as $aa$ and $bb$ to $b$. This also
implies local derivability in the sense of \cite[Sec.~5.2]{TAO}. The
factor map from $\XX^{}_{\mathrm{TM}}$ to its MEF is obtained as
$\phi$ followed by the usual factor map from $\XX^{}_{\mathrm{pd}}$ to
$\ZZ_{2}$.  \exend
\end{example}

Let us now discuss two ways to generalise these findings to infinite
families of shifts, one over the binary alphabet $\cA$ and another
for larger alphabets.

The \emph{generalised TM} (gTM) substitution \cite{BGG} is a 
substitution in the spirit of \cite{Keane} defined by
\[
    \theta^{(k,\ell)}_{\mathrm{gTM}}  : \quad
   a \mapsto a^k b^\ell \ts , \, b \mapsto b^k a^\ell \ts ,
\]
where $k,\ell \in \NN$ are arbitrary, but fixed. It is primitive,
bijective and of constant length $k+\ell$.  Note that $k=\ell=1$ is
the classic TM substitution from Example~\ref{ex:TM}. For general
$k,\ell$, the corresponding minimal shift
$\XX^{(k,\ell)}_{\mathrm{gTM}}$ is a topological double cover of the
shift $\XX^{(k,\ell)}_{\mathrm{gpd}}$ defined by
\[
    \theta^{(k,\ell)}_{\mathrm{gpd}} : \quad a \mapsto ua \ts ,
    \, b \mapsto ub
\]    
with $u = b^{k-1}a b^{\ell-1}$, the latter being known as the
\emph{generalised period doubling} (gpd) system \cite{BGG}.  In fact,
one may use the factor map $\phi$ from Example~\ref{ex:TM}, and gets
$\XX^{(k,\ell)}_{\mathrm{gpd}} = \phi
\bigl(\XX^{(k,\ell)}_{\mathrm{gTM}} \bigr)$ for all $k,\ell\in\NN$.
Also, we get the same extra symmetry for gTM that we had in the TM
example, namely the LEM $F_{\alpha}$, again by Lemma~\ref{lem:Coven}.
As a generalisation of Example~\ref{ex:pd}, all generalised period
doubling systems can be described as regular model sets, with
$\ZZ_{k+\ell}$ as internal space.

Clearly, for $k=\ell$, the shift $\theta^{(k,\ell)}_{\mathrm{gpd}}$ is
palindromic by Fact~\ref{fact:palin}, so reversible in the simple form
that we know from Fact~\ref{fact:semi}. Since also
$\theta^{(k,\ell)}_{\mathrm{gTM}}$ is palindromic in this case, which
is easy to see from iterating the rule on the legal seed $a|a$, the
situation is analogous to Examples~\ref{ex:TM} and \ref{ex:pd}. If
$k\neq \ell$, one can apply Lemma~\ref{lem:necessary} to show that
$\theta^{(k,\ell)}_{\mathrm{gTM}}$ has no reversors with
$\kappa$-value $0$. All our substitutions
$\theta^{(k,\ell)}_{\mathrm{gTM}}$ have column number $2$, and since
$\{z \in\ZZ_{k+\ell}\mid \card( \phi^{-1}(z) ) > 2 \} = \ZZ$, no
$t \not\in\ZZ$ can satisfy
$ \{z \in\ZZ_{k+\ell}\mid \card(\phi^{-1}(z)) > 2 \} = t -
\{z\in\ZZ_{k+\ell} \mid \card(\phi^{-1}(z)) > 2 \}$.
Hence, by Corollary~\ref{coro:equicontinuous},
$\theta^{(k,\ell)}_{\mathrm{gTM}}$ has no reversors if $k\neq \ell$.

Although $\theta^{(k,\ell)}_{\mathrm{gpd}}$ does not satisfy the
conditions of Lemma~\ref{lem:necessary}, it is still the case that any
reversor with $\kappa$-value $0$ must have radius $0$.  By the
discussion in the proof of Lemma \ref{lem:necessary}, the radius of a
reversor $H$ (meaning that of $H'$ in its representation as
$H = H' \circ R^{\ts \prime}$) is at most $1$, so that a reversor
comprises a local rule that acts on words of length at most $3$.

Now, if one of $k,\ell$ is at least $3$, the only legal
$\theta^{(k,\ell)}_{\mathrm{gpd}}$-words of length $3$ are
$ \{bbb, bba, bab, abb \}$.  By an inspection of the shift defined by
the substitution $\theta^{(k,\ell)}_{\mathrm{gpd}}$, any reversor
(when viewed as a sliding block map with radius $1$) must map
$b\underline{a}b$ to $a$ and each of
$ \{b\underline{b}b, b\underline{b}a, a\underline{b}b \}$ to $b$.
Consequently, the reversor has radius 0.  Now, we check that the
conditions of Lemma~\ref{lem:necessary} are not satisfied, so that
there are no reversors with $\kappa$-value $0$. The situation
$\{k,\ell\}=\{1,2\}$ is similar, except that $bbb$ is not a legal
word. As for the generalised Thue--Morse substitutions, we have
$\{z\in\ZZ_{k+\ell} \mid \card(\phi^{-1}(z)) > 2 \} = \ZZ$, so once
again $\theta^{(k,\ell)}_{\mathrm{gpd}}$ has no reversors if
$k\neq \ell$. Thus we may conclude as follows.

\begin{thm}
  For any\/ $k,\ell\in\NN$, the generalised period doubling shift\/
  $\XX^{(k,\ell)}_{\mathrm{gpd}}$ has minimal symmetry group,
  $\cS_{\mathrm{gpd}} = \langle S \ts \rangle \simeq \ZZ$, while\/
  $\XX^{(k,\ell)}_{\mathrm{gTM}}$ as its topological double cover
  has symmetry group\/
  $\cS_{\mathrm{gTM}} = \langle S \ts \rangle \rtimes \langle
  F_{\alpha} \rangle \simeq \ts \ZZ \times C_{2}$,
  with the involution\/ $F_{\alpha}$ being the LEM for\/
  $a \leftrightarrow b$.
  
  When\/ $k = \ell$, both shifts are reversible, with the standard
  involutory reversor\/ $R^{\ts \prime}$, so one has\/
  $\cR = \cS \rtimes \langle R^{\ts \prime} \ts \rangle \simeq \cS
  \rtimes C_{2}$.  Consequently, all reversors are involutions.

  When\/ $k \ne \ell$, neither substitution is reversible, thus\/
  $\cR = \cS$ in this case.  \qed
\end{thm}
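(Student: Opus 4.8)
The plan is to establish the statement by assembling three separate analyses. \emph{Symmetry groups.} The substitution $\theta^{(k,\ell)}_{\mathrm{gpd}}$ is primitive, aperiodic and of constant length with height $1$ over $\{a,b\}$, and its two images $ua$ and $ub$ agree in all but the last position, i.e.\ it has a coincidence; Lemma~\ref{lem:Coven} then gives $\cS_{\mathrm{gpd}} = \langle S \ts \rangle \simeq \ZZ$ at once. For the double cover $\XX^{(k,\ell)}_{\mathrm{gTM}}$, the substitution $\theta^{(k,\ell)}_{\mathrm{gTM}}$ is bijective --- each of its columns is the transposition $\alpha \! : \, a\leftrightarrow b$ --- so Lemma~\ref{lem:Coven} gives $\cS_{\mathrm{gTM}} = \langle S \ts \rangle \times \langle F_{\alpha} \rangle \simeq \ZZ \times C_{2}$, with $F_{\alpha}$ the LEM from \eqref{eq:def-LEM}.

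\emph{The case $k=\ell$.} Now the central block $u = b^{k-1} a b^{k-1}$ is a palindrome, so Fact~\ref{fact:palin} (with $p=u$ and the one-letter palindromes $q^{}_{a}=a$, $q^{}_{b}=b$) shows $\XX^{(k,k)}_{\mathrm{gpd}}$ is palindromic, hence reflection invariant; and iterating $\theta^{(k,k)}_{\mathrm{gTM}}$ on the legal seed $a|a$ produces a palindromic fixed point of a suitable power, so $\XX^{(k,k)}_{\mathrm{gTM}}$ is palindromic too. In either case $R^{\ts \prime}$ lies in $\Aut(\XX)$ and conjugates $S$ into $S^{-1}$, so Fact~\ref{fact:semi} gives $\cR = \cS \rtimes \langle R^{\ts \prime} \rangle$; writing an arbitrary reversor as $R^{\ts \prime} G$ with $G\in\cS$ and using $R^{\ts \prime}SR^{\ts \prime}=S^{-1}$ together with the commutativity of $R^{\ts \prime}$, $S$ and $F_{\alpha}$ where relevant, one finds $(R^{\ts \prime} G)^{2}=\id$, so all reversors are involutions.

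\emph{The case $k\neq\ell$.} Here I would rule out reversibility in two stages, using that the MEF is the $(k\!+\!\ell)$-adic odometer $(\ZZ_{k+\ell},+1)$ with factor map $\phi$ and that $S$ acts on it by $+1$, so $\kappa(S^{m})=m$ by Theorem~\ref{thm:equicontinuous}. \emph{Stage one} shows that no reversor $H$ has $\kappa(H)=0$: for gTM this is Lemma~\ref{lem:necessary}, since $\theta^{(k,\ell)}_{\mathrm{gTM}}$ is strongly injective and its conditions (3)--(4) fail once $k\neq\ell$; for gpd, where Lemma~\ref{lem:necessary} does not apply verbatim, I would instead use the radius bound extracted from its proof ($H'=HR^{\ts \prime}$ has radius $\leqslant 1$) and a short combinatorial check on the legal length-$3$ words --- $\{bbb,bba,bab,abb\}$ when $\max\{k,\ell\}\geqslant 3$, and $\{bba,bab,abb\}$ when $\{k,\ell\}=\{1,2\}$ --- which forces any radius-$1$ reversor to send $b\underline{a}b\mapsto a$ and each of the others to $b$, hence to have radius $0$, whereupon the surviving candidate $F_{\alpha}R^{\ts \prime}$ is excluded by inspection. \emph{Stage two} promotes this to ``no reversor at all'': the covering number equals $c^{}_{\theta}$ ($=2$ for gTM, $=1$ for gpd) and $\{z\in\ZZ_{k+\ell} \mid \card(\phi^{-1}(z))>c^{}_{\theta}\}=\ZZ$, so Corollary~\ref{coro:equicontinuous}(2) forces any reversor $H$ to satisfy $\ZZ=\kappa(H)-\ZZ$ inside $\ZZ_{k+\ell}$, which is possible only if $\kappa(H)=m$ for some $m\in\ZZ$; then $S^{-m}H$ would be a reversor of $\kappa$-value $0$, contradicting Stage one, so $\cR=\cS$. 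The genuinely case-specific obstacle is Stage one for $\theta^{(k,\ell)}_{\mathrm{gpd}}$ --- reproducing the radius argument, collapsing it to radius $0$ via the length-$3$ words, and checking that $F_{\alpha}R^{\ts \prime}$ fails the legality (or commutation) requirement exactly when $k\neq\ell$; the rest is a matter of invoking Lemmas~\ref{lem:Coven} and \ref{lem:necessary}, Facts~\ref{fact:palin} and \ref{fact:semi}, and Corollary~\ref{coro:equicontinuous}.
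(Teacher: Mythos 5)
Your proposal follows the paper's own argument essentially step for step: Coven's lemma (Lemma~\ref{lem:Coven}) for the two symmetry groups, palindromicity via Fact~\ref{fact:palin} and the seed $a|a$ combined with Fact~\ref{fact:semi} for $k=\ell$, and the same two-stage exclusion for $k\neq\ell$ (Lemma~\ref{lem:necessary} for gTM, the radius-$1$-to-radius-$0$ collapse on the legal length-$3$ words for gpd, and then Corollary~\ref{coro:equicontinuous}(2) on the $(k+\ell)$-adic odometer to reduce every reversor to one of $\kappa$-value $0$). The only slip is that the columns of $\theta^{(k,\ell)}_{\mathrm{gTM}}$ are not all equal to the transposition $\alpha$ --- the first $k$ are the identity and the last $\ell$ are $\alpha$ --- but each is a permutation, which is all that bijectivity (and hence the application of Lemma~\ref{lem:Coven}) requires.
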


The common feature of the entire family of extensions is that we are
always dealing with a topological $2:1$ mapping between two systems.
Let us now look at different extensions, for which we profit from a
larger alphabet.

\section{Larger alphabets}\label{sec:more}

To discuss another type of extension of the TM shift, let
$C^{}_{\nts N} = \{ 0,1,\ldots , N\! \nts - \! 1\}$ with addition
modulo $N$ denote the cyclic group of order $N$, and define
$\cA = \{ a_{i} \mid i \in C^{}_{\nts N} \}$ as our alphabet. Consider
the substitution
\[
     \theta^{}_{\nts N} : \quad  a_{i} \,\mapsto\, a_{i} \ts 
     a_{i+1} \ts ,  \quad \text{for $i\in C^{}_{\nts N}$} \ts ,
\]
where the sum in the index is understood within $C^{}_{\nts N}$, thus
taken modulo $N$. This defines a primitive substitution, which results
in a periodic sequence (and shift) for $N=1$, and gives the classic TM
system for $N=2$. We denote the shift for $\theta^{}_{\nts N}$ by
$\XX^{}_{\nts N}$, which may be viewed as a cyclic generalisation of the
$N=2$ case.  Note that $N=1$ gives a shift without faithful shift
action, and will later be excluded from the symmetry considerations.
All other cases are aperiodic.

Consider the cyclic permutation
$\pi = (0 \, 1 \, 2 \, \dots \, N\nts\!-\!1)$ of $\cA$, which has
order $N$, and let
$F_{\pi} \! : \, \XX^{}_{\nts N} \xrightarrow{\quad} \XX^{}_{\nts N}$
denote the induced cyclic mapping (or LEM) defined by
$\bigl( F_{\pi}(x)\bigr)_{i} = \pi (x_{i})$. Since
$\theta^{}_{\nts N} \circ \pi = \pi \circ \theta^{}_{\nts N}$ (with
obvious meaning), it is clear that
$F_{\pi} \in \Aut (\XX^{}_{\nts N})$.

\begin{lem}\label{lem:all-legal}
  For any fixed\/ $N\in\NN$, all\/ $N^{2}$ words of length\/ $2$ are
  legal for\/ $\theta^{}_{\nts N}$. Consequently, every\/
  $a_{i} \ts a_{j}$ with\/ $i,j \in C^{}_{\nts N}$ occurs in any
  element of the shift\/ $\XX^{}_{\nts N}$ defined by\/
  $\theta^{}_{\nts N}$.

  Moreover, the map\/ $F_{\pi}$ generates a cyclic subgroup of\/
  $\cS (\XX_{\nts N})$, with\/
  $\langle F_{\pi}\rangle \simeq C^{}_{\nts N}$.
\end{lem}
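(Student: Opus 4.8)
The plan is to prove the three assertions in sequence, with the combinatorial statement about length-$2$ words doing the work for the other two. Write $\cL$ for the set of $\theta^{}_{\nts N}$-legal words of length two. First I would record two closure properties of $\cL$. Since $\theta^{}_{\nts N}$ is primitive, every letter $a_{i}$ is legal (for instance, $a_{i}$ occurs in $\theta^{n}_{\nts N}(a^{}_{0})$ for $n\geqslant i$), so $\theta^{}_{\nts N}(a_{i}) = a_{i}\ts a_{i+1}$ already shows $a_{i}\ts a_{i+1}\in\cL$ for every $i\in C^{}_{\nts N}$. Next, if $a_{i}\ts a_{j}\in\cL$, then $\theta^{}_{\nts N}(a_{i}\ts a_{j}) = a_{i}\ts a_{i+1}\ts a_{j}\ts a_{j+1}$ is again legal, hence so is its length-$2$ subword $a_{i+1}\ts a_{j}$; iterating, $a_{i}\ts a_{j}\in\cL$ forces $a_{i+m}\ts a_{j}\in\cL$ for all $m\geqslant 0$, thus for all $m\in C^{}_{\nts N}$. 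Combining the two observations: starting from $a_{i}\ts a_{i+1}\in\cL$ and repeatedly incrementing the first letter gives $a_{p}\ts a_{i+1}\in\cL$ for every $p$, and since $i+1$ runs over all of $C^{}_{\nts N}$, every pair $a_{p}\ts a_{q}$ is legal. This is the first assertion.

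For the second assertion, I would invoke that $\XX^{}_{\nts N}$ is minimal, because $\theta^{}_{\nts N}$ is a primitive substitution; consequently the language of $\XX^{}_{\nts N}$ coincides with the set of subwords of any single element, so every $a_{i}\ts a_{j}$, being legal, occurs in every $x\in\XX^{}_{\nts N}$. In particular, each individual letter of $\cA$ occurs in every element of the shift, which I reuse below.

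For the last assertion, recall that $F_{\pi}\in\Aut(\XX^{}_{\nts N})$ is already established before the lemma. That $F_{\pi}\in\cS(\XX^{}_{\nts N})$ is immediate, since $F_{\pi}$ acts coordinatewise by the fixed permutation $\pi$: one has $\bigl(F_{\pi}(Sx)\bigr)_{i} = \pi(x_{i+1}) = \bigl(S\ts F_{\pi}(x)\bigr)_{i}$, so $F_{\pi}$ commutes with $S$. Because $F_{\pi}^{m} = F_{\pi^{m}}$ and $\ord(\pi)=N$, we get $F_{\pi}^{N} = \id$, so $\ord(F_{\pi})$ divides $N$. Conversely, if $F_{\pi}^{m}=\id$ for some $1\leqslant m< N$, then $\pi^{m}$ fixes $x^{}_{0}$ for every $x\in\XX^{}_{\nts N}$; since every letter of $\cA$ occurs as $x^{}_{0}$ for a suitable $x$ by the second assertion, this forces $\pi^{m}=e$ in $\varSigma^{}_{\! \cA}$, contradicting $\ord(\pi)=N$. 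Hence $\ord(F_{\pi})=N$ and $\langle F_{\pi}\rangle\simeq C^{}_{\nts N}$ sits inside $\cS(\XX^{}_{\nts N})$.

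The only genuinely substantive step is the closure argument for $\cL$: the key point is that $\cL$ is stable under incrementing the \emph{first} letter (this is exactly the inner length-$2$ subword of $\theta^{}_{\nts N}(a_{i}\ts a_{j})$), which, together with the diagonal words $a_{i}\ts a_{i+1}$ that come for free from the substitution rule, already exhausts $C^{}_{\nts N}\times C^{}_{\nts N}$. Everything else is bookkeeping, with the one mild subtlety that primitivity is needed precisely to guarantee that all single letters are legal (equivalently, occur in $\XX^{}_{\nts N}$), which is also exactly what pins down the exact order of $F_{\pi}$.
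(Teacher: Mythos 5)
Your proof is correct and follows essentially the same route as the paper: the key step in both is that the inner length-$2$ subword of $\theta^{}_{\nts N}(a_{i}\ts a_{j})$ is $a_{i+1}\ts a_{j}$, iterated to sweep the first letter through all of $C^{}_{\nts N}$, with minimality from primitivity giving the second claim. You merely seed the induction with $a_{i}\ts a_{i+1}$ rather than first establishing $a_{i}\ts a_{i}$, and you spell out the order of $F_{\pi}$ a bit more explicitly than the paper does; neither changes the substance.
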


\begin{proof}
  Since $a_{i-1} \mapsto a_{i-1} \ts a_{i} \mapsto a_{i-1} \ts a_{i}
  \ts a_{i} \ts a_{i+1}$ under the action of $\theta^{}_{\nts N}$, all
  $a_{i} \ts a_{i}$ are legal. As $a_{i} \ts a_{i} \mapsto a_{i} \ts
  a_{i+1} \ts a_{i} \ts a_{i+1}$, also all $a_{i+1} \ts a_{i}$ are
  legal. Iterating this argument inductively shows that all $a_{i+2}
  \ts a_{i}$ are legal, hence also all $a_{i+k} \ts a_{i}$ for any $k
  \in C^{}_{\nts N}$, which proves our first claim.

  The second is a standard consequence of the primitivity of
  $\theta^{}_{\nts N}$, which implies minimality of the shift
  $\XX^{}_{\nts N}$ defined by it as well as the fact that any two
  elements of $\XX^{}_{\nts N}$ are locally indistinguishable, hence
  have the same set of subwords.

  As $F_{\pi}$ obviously commutes with the shift action, the last
  claim is clear.
\end{proof}

Let us now define a sliding block map $\varphi$ via $a_{i} \ts a_{j}
\mapsto a_{i+1-j}$. Since $a_{i+k} \ts a_{j+k} \mapsto a_{i+1-j}$ for
any pair $(i,j)$ and every $k\in C^{}_{\nts N}$, it is not difficult
to check that the factor shift $\XX^{\prime}_{\nts N} := \varphi
(\XX^{}_{\nts N})$ can be generated by the substitution
\[
     \theta^{\ts \prime}_{\nts N} : \quad a_{i} 
     \,\mapsto\, a^{}_{0} \ts a_{i+1} \ts ,
     \quad \text{for $i\in C^{}_{\nts N}$} \ts ,
\]
which is once again primitive. Also, one has a coincidence in the
first position, which gives pure point dynamical spectrum and a
description as a regular model set by the usual arguments.

Observe that $\varphi \circ F_{\pi} = \varphi$, which implies that the
mapping $\varphi$ is at least $N:1$. Now, given an element
$x^{\ts\prime}\in \XX^{\prime}_{\nts N}$, its entry at position $0$,
say, can have at most one of $N$ possibilities as preimage under the
block map, again by Lemma~\ref{lem:all-legal}. Pick one of them, and
continue to the right by one position, where the local preimage is now
uniquely fixed by the previous choice. This argument applies to all
further positions to the right, and analogously also to the left, so
our initial choice results in a unique preimage $x\in\XX^{}_{\nts N}$.
Consequently, $x^{\ts\prime}$ has at most $N$ preimages, and our
previous observation then tells us that $\varphi$ defines a mapping
that is globally $N:1$.

Note that one has $\XX^{\prime}_{1} = \XX^{}_{1}$, which is the
excluded periodic system, while $\XX^{\prime}_{2}$ is the period
doubling system from Example~\ref{ex:pd} with the new alphabet
$\{ a^{}_{0}, a^{}_{1}\}$.  More interestingly, the case $N=3$ is
intimately related with the non-primitive $4$-letter substitution
$\theta^{}_{\mathrm{G}}$ known from the study of the Grigorchuk group;
compare \cite{Vor} and references therein. In fact, using the alphabet
$\{ x,a,b,c \}$, one can define the latter as
\[
      \theta^{}_{\mathrm{G}} : \quad
      x \mapsto xax \ts , \, a \mapsto b \mapsto c \mapsto a \ts ,
\]
which is non-primitive, but has a cyclic structure built into it. The
one-sided fixed point starting with $x$ has an alternating structure
that suggests to use words of length $2$ as new building blocks (or
alphabet), namely $a^{}_{0} = xa$, $a^{}_{1} = xb$ and
$a^{}_{2} = xc$. The resulting substitution rule for the new alphabet
is $\theta^{\ts \prime}_{3}$, which defines a shift that is conjugate
to a higher power (square) shift of the Grigorchuk substitution.  This
observation also provides a simple path to the pure point spectrum of
$\theta^{}_{\mathrm{G}}$ via a suspension into an $\RR$-action.
Alternatively, one can use the method from \cite{Durand} to recode
$ \theta^{}_{\mathrm{G}}$ by a $6$-letter substitution (with
coincidence) that is topologically conjugate to
$\theta^{}_{\mathrm{G}}$ on the level of the $\ZZ$-action generated by
$S$. 

\begin{thm}\label{thm:cyclic-TM}
  Let\/ $N\in\NN$ with\/ $N\geqslant 2$ be fixed. The shift\/
  $\XX^{\prime}_{\nts N}$ has minimal symmetry group\/ $\cS
  (\XX^{\prime}_{\nts N}) = \langle S \ts \rangle \simeq \ts \ZZ$ and is
  reversible, with\/ $\cR (\XX^{\prime}_{\nts N}) \simeq \ts \ZZ \rtimes
  C_{2}$. Consequently, all reversors are involutions.
  
  The cyclic TM shift\/ $\XX^{}_{\nts N}$ is a globally\/ $N:1$ cover
  of\/ $\XX^{\prime}_{\nts N}$, with\/ $\cS (\XX^{}_{\nts N}) \simeq \ts \ZZ
  \times C^{}_{\nts N}$. Moreover, $\XX^{}_{\nts N} $ is reflection
  invariant only for\/ $N=2$, but reversible for all\/ $N\geqslant 2$,
  with an involutory reversor and reversing symmetry group\/ $\cR
  (\XX^{}_{\nts N}) \simeq (\ZZ \times C^{}_{\nts N} ) \rtimes C^{}_{2}$.
  Furthermore, also in this case, all reversors are involutions.
\end{thm}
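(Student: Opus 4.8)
The plan is to treat the two shifts $\XX^{\ts\prime}_{\nts N}$ and $\XX^{}_{\nts N}$ in turn, using the machinery already developed for constant-length substitutions together with the explicit model-set / odometer description of their maximal equicontinuous factors.

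\medskip
\textbf{Step 1: the factor $\XX^{\ts\prime}_{\nts N}$.} The substitution $\theta^{\ts\prime}_{\nts N}$ has constant length $2$, height $1$, and a coincidence in the first position (it sends every $a_i$ to a word beginning with $a^{}_0$), so its column number is $1$. By Lemma~\ref{lem:Coven} in its spirit (or directly by Lemma~\ref{lem:necessary} with $c^{}_\theta = 1$, noting that the only zero-radius symmetries are LEMs $F_\alpha$ and a coincidence forces $\alpha$ to be trivial), every symmetry has zero $\kappa$-value only if it is the identity, and the covering map to the MEF is a.e.\ one-to-one; hence $\cS(\XX^{\ts\prime}_{\nts N}) = \langle S \ts\rangle \simeq \ZZ$. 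For reversibility, I would check the conditions of Lemma~\ref{lem:necessary}: since all words of length $2$ over $\{a^{}_0,\dots,a^{}_{N-1}\}$ are legal (this is inherited through the block map $\varphi$ from Lemma~\ref{lem:all-legal}, or re-proved directly for $\theta^{\ts\prime}_{\nts N}$), condition~(3) is automatic, and condition~(4) with $c^{}_\theta = 1$ reduces to requiring $\alpha \circ \theta^{\ts\prime}_{\nts N}(a_i a_j) = \theta^{\ts\prime}_{\nts N} \circ \alpha(a_j a_i)$ for all $i,j$. Writing $\alpha = \pi^{t}$ for a candidate and spelling out $\theta^{\ts\prime}_{\nts N}(a_i a_j) = a^{}_0 a_{i+1} a^{}_0 a_{j+1}$, the equation forces $\alpha$ to be the permutation $a_i \mapsto a_{-i}$ (equivalently, $i \maps-i$ in $C^{}_{\nts N}$); this produces the reversor $R^{\ts\prime}_{\alpha} = F_{\alpha} R^{\ts\prime}$. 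Since $\alpha$ is an involution, $R^{\ts\prime}_{\alpha}$ is an involution. Combined with $\cS(\XX^{\ts\prime}_{\nts N}) = \langle S \rangle$ and Fact~\ref{fact:semi} (second and third parts, applied with $R^{\ts\prime} := R^{\ts\prime}_{\alpha}$), we conclude $\cR(\XX^{\ts\prime}_{\nts N}) = \langle S \rangle \rtimes \langle R^{\ts\prime}_{\alpha}\rangle \simeq \ZZ \rtimes C_2$ and all reversors are of the form $R^{\ts\prime}_{\alpha} S^{m}$, hence involutions.

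\medskip
\textbf{Step 2: the symmetry group of $\XX^{}_{\nts N}$.} The global $N{:}1$ statement is already established in the text preceding the theorem (via the block map $\varphi$ and the unique-extension argument). For the symmetry group, $\theta^{}_{\nts N}$ is bijective of constant length $N$, so column number $N$, and Lemma~\ref{lem:all-legal} gives $\langle F_\pi\rangle \simeq C^{}_{\nts N} \subseteq \cS(\XX^{}_{\nts N})$. To see there is nothing more, I would run the tower/cover argument of \cite[Prop.~3.5]{CQY} (as invoked elsewhere in the paper): any symmetry projects to a symmetry of $\XX^{\ts\prime}_{\nts N} = \varphi(\XX^{}_{\nts N})$, which by Step~1 is a power of $S$; lifting and correcting by a power of $S$ leaves a symmetry in the kernel of the projection, which by the $N{:}1$ fibre structure together with Lemma~\ref{lem:necessary} (zero-$\kappa$ symmetries are LEMs, and a LEM commuting with $\theta^{}_{\nts N}$ must be a power of $\pi$) is a power of $F_\pi$. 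Hence $\cS(\XX^{}_{\nts N}) = \langle S\rangle \times \langle F_\pi\rangle \simeq \ZZ \times C^{}_{\nts N}$, with the two factors commuting because $F_\pi$ commutes with the shift.

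\medskip
\textbf{Step 3: reversibility of $\XX^{}_{\nts N}$ and the structure of $\cR$.} The reflection-invariance claim is easy: $\XX^{}_{\nts N}$ is reflection invariant iff $a_j a_i$ legal $\Rightarrow a_i a_j$ legal, but since all length-$2$ words are legal this holds trivially for \emph{every} $N$ --- so to match the statement I should instead verify whether $R$ (resp.\ $R^{\ts\prime}$) itself is a \emph{reversor}, i.e.\ whether $R(\XX^{}_{\nts N}) = \XX^{}_{\nts N}$ at the level of legal words of length $\ge 3$; one checks that $a_i a_i a_{i+1}$ is legal while its reverse $a_{i+1} a_i a_i$ is legal only when $N = 2$, pinning the reflection-invariance assertion to $N=2$. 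For reversibility in general: one looks for $\alpha \in \varSigma^{}_{\!\cA}$ with $F_\alpha R^{\ts\prime}$ a reversor. Applying Lemma~\ref{lem:necessary} with $c^{}_\theta = N$ (so $\theta^{c^{}_\theta!}$ in condition~(4)), and the strong injectivity of $\theta^{}_{\nts N}$ (which I would verify: it is injective on letters, and its one-sided fixed points starting from distinct letters differ beyond the first entry because $\theta^{}_{\nts N}(a_i)$ determines $a_{i+1}$ at the second position), the candidate forced by condition~(4) is again $\alpha : a_i \mapsto a_{-i}$, the inversion of $C^{}_{\nts N}$, which indeed conjugates $\theta^{}_{\nts N}$ to its reverse since $\bar{\theta}^{}_{\nts N}(a_i) = a_{i+1}a_i$ and $\alpha\theta^{}_{\nts N}(a_i) = a_{-i}a_{-i-1} = \bar\theta^{}_{\nts N}(\alpha(a_i))$ after relabelling. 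This gives an involutory reversor $H := F_\alpha R^{\ts\prime}$ (involutory because $\alpha^2 = e$ and $(R^{\ts\prime})^2 = \id$ with $F_\alpha, R^{\ts\prime}$ commuting). One must still rule out $\kappa$-nonzero reversors giving extra coset structure: since the MEF of $\XX^{}_{\nts N}$ is the $(k{+}\ell)$-adic-type odometer $(\ZZ_{\nts N}, +1)$ with $\{z : \card\phi^{-1}(z) > N\} = \ZZ$ (all of $\XX^{}_{\nts N}$ being exactly $N{:}1$ except on the integer points), Part~(2) of Corollary~\ref{coro:equicontinuous} forces any reversor's $\kappa$-value to be that of an integer translation, hence realisable by composing $H$ with a power of $S$; thus every reversor lies in $\cS(\XX^{}_{\nts N}) \cdot H$. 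Finally, $\cR(\XX^{}_{\nts N}) = \cS(\XX^{}_{\nts N}) \rtimes \langle H\rangle \simeq (\ZZ \times C^{}_{\nts N}) \rtimes C_2$, where the $C_2$ acts on $\ZZ$ by inversion (since $HSH^{-1} = S^{-1}$) and on $C^{}_{\nts N}$ by inversion as well (since $H F_\pi H^{-1} = F_{\alpha\pi^{-1}\alpha} = F_{\pi^{-1}}$, as $\alpha$ inverts the cyclic group). Every reversor then has the form $S^m F_\pi^t H$ with $(S^m F_\pi^t H)^2 = S^m F_\pi^t H S^m F_\pi^t H = S^m F_\pi^t S^{-m} F_\pi^{-t} H^2 = \id$, so all reversors are involutions.

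\medskip
\textbf{Main obstacle.} The delicate point is Step~3: correctly identifying the reflection-type hypothesis that distinguishes $N=2$ (where one can also use the plain $R$ or $R^{\ts\prime}$) from $N \ge 3$ (where only the twisted reversor $F_\alpha R^{\ts\prime}$ with $\alpha$ the group inversion works), and then cleanly excluding higher-radius or $\kappa$-nonzero reversors so that the semidirect-product structure $(\ZZ \times C^{}_{\nts N})\rtimes C_2$ is exactly right --- in particular verifying that the $C_2$ acts by simultaneous inversion on both $\ZZ$ and $C^{}_{\nts N}$, which is what makes every reversor square to the identity. The $\kappa$-value exclusion via Corollary~\ref{coro:equicontinuous} mirrors Example~\ref{ex:non-rev} and the generalised-TM computation, so the genuinely new work is the commutation bookkeeping for $F_\pi$ under conjugation by the reversor.
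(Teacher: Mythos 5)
Your overall strategy (odometer MEF plus Corollary~\ref{coro:equicontinuous} to confine $\kappa$-values to $\ZZ$, Lemma~\ref{lem:necessary} to hunt for LEM-type reversors, and the conjugation relation $H F_{\pi} H^{-1} = F_{\pi^{-1}}$ for the involution claim) is essentially the paper's, and your Step~3 reversor $F_{\gamma} R^{\ts\prime}$ with $\gamma \colon i \mapsto -i$ for $\XX^{}_{N}$, together with the final squaring computation, matches the paper exactly. But Step~1 contains a genuine error: the map $F_{\alpha} R^{\ts\prime}$ with $\alpha \colon a_{i} \mapsto a_{-i}$ is \emph{not} a reversor of $\XX^{\prime}_{N}$ for $N \geqslant 3$, so the existence of a reversor for the factor shift remains unproven in your write-up. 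Concretely, for $N=3$ the word $a_{1} a_{0} a_{0}$ is legal (it occurs in the fixed point $a_{0}a_{1}a_{0}a_{2}a_{0}a_{1}a_{0}a_{0}\ldots$), and $F_{\alpha} R^{\ts\prime}$ sends it to $a_{0}a_{0}a_{2}$, which is illegal: every legal length-$2$ word of $\XX^{\prime}_{3}$ contains $a_{0}$, while an occurrence of $a_{0}a_{0}a_{2}$ at positions $(2m-1,2m,2m+1)$ would force the illegal word $a_{2}a_{1}$ at positions $(m-1,m)$. Moreover, spelling out your own equation $\alpha(a_{0}a_{i+1}a_{0}a_{j+1}) = a_{0}a_{\alpha(j)+1}a_{0}a_{\alpha(i)+1}$ and setting $i=j$ forces $\alpha(i+1)=\alpha(i)+1$ with $\alpha(0)=0$, hence $\alpha=\mathrm{id}$, which then fails for $i\ne j$; so no LEM paired with $R^{\ts\prime}$ (even core) works here. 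The paper's route is much simpler and is the observation you are missing: $\theta^{\ts\prime}_{N}(a_{i}) = a_{0}\, a_{i+1}$ has the form $p\, q_{i}$ with $p=a_{0}$ and $q_{i}=a_{i+1}$ single letters, so Fact~\ref{fact:palin} gives palindromicity, hence $R(\XX^{\prime}_{N}) = \XX^{\prime}_{N}$, and the \emph{plain} reflection $R$ (odd core) is an involutory reversor; Fact~\ref{fact:semi} then yields $\cR(\XX^{\prime}_{N}) \simeq \ZZ \rtimes C_{2}$ with all reversors involutions.

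Two smaller points. First, $\theta^{}_{N}$ has constant length $2$ (not $N$), so the MEF of $\XX^{}_{N}$ is the binary odometer $(\ZZ_{2},+1)$, not $(\ZZ_{N},+1)$; the set $\{z \mid \card(\phi^{-1}(z)) > 1\} = \ZZ$ and the conclusion $\kappa(H)\in\ZZ$ survive, but the object must be named correctly. Second, your Step~2 lifting argument leaves unjustified why every symmetry of $\XX^{}_{N}$ descends through $\varphi$ to one of $\XX^{\prime}_{N}$; the paper avoids this entirely with a counting argument: $\kappa$ is at most $N$-to-one on $\cS(\XX^{}_{N})$ (Corollary~\ref{coro:equicontinuous} with $c=N$), $\kappa(\cS(\XX^{}_{N}))\subseteq\ZZ$, and $\kappa^{-1}(0)$ already contains the $N$-element group $\langle F_{\pi}\rangle$, hence equals it, giving $\cS(\XX^{}_{N}) = \langle S\rangle \times \langle F_{\pi}\rangle$ directly.
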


\begin{proof}
  Since $\theta^{\ts \prime}_{\nts N}$ is of length $2$, the MEF is the
  odometer $(\ZZ_{2},+1)$, with factor map $\phi$. Using the techniques
  of \cite[Sec.~3.4]{CQY}, we see that
\[
   \{ z\in \ZZ_{2} \mid \card 
   (\phi^{-1}(z)) > 1 \} \, = \, \ZZ \ts ,
\]   
where we identify $n\in \ZZ$ with its binary expansion in
$\ZZ_{2}$. Moreover, one has $\kappa (S) = 1$ in this setting. By part
(1) of Corollary~\ref{coro:equicontinuous}, we see that
$\kappa (H) \in \ZZ$ must hold for any
$H\in \cR (\XX^{\prime}_{\nts N})$. Since
$\theta^{\ts \prime}_{\nts N}$ has a coincidence, we have $c=1$, and
part (2) of the same corollary tells us that $\kappa$ is injective on
$\cS (\XX^{\prime}_{\nts N})$, so that the latter is
$\langle S \ts \rangle \simeq \ZZ$ as claimed.
   
Since $\theta^{\ts \prime}_{\nts N}$ is palindromic by
Fact~\ref{fact:palin}, we have $R (\XX^{\prime}_{\nts N}) =
\XX^{\prime}_{\nts N}$ and thus reversibility according to
Fact~\ref{fact:semi} with $C_{2} = \langle R \ts \rangle$ as before,
so the claim on $\cR (\XX^{\prime}_{\nts N})$ is clear.  \smallskip

For $\theta_{\nts N}$, the MEF of $(\XX^{}_{\nts N},S)$ is still
$\ZZ_{2}$, again with
$\{ z \in \ZZ_{2} \mid \card (\phi^{-1}(z)) > 1\} = \ZZ$ and
$\kappa (S) = 1$, so that $\kappa(H) \in \ZZ$ for all
$H\in \cR (\XX^{}_{\nts N})$. The difference to above is that $c=N$
for $\theta_N$, and $\kappa$ is at most $N$-to-$1$ on
$\cS (\XX^{}_{\nts N})$. As
$\kappa^{-1}(0) \cap \cS (\XX^{}_{\nts N})$ contains
$\langle F_\pi \rangle$, and hence equals the latter, one has
$\cS (\XX^{}_{\nts N}) \simeq \ts \ZZ \times C^{}_{\nts N}$.

Unlike in the Thue--Morse case, which is $N=2$, the substitution
$\theta^{}_{\nts N}$ is no longer reflection invariant for $N>2$. To
show that
$\cR (\XX^{}_{\nts N}) \simeq (\ZZ \times C^{}_{\nts N} ) \rtimes
C^{}_{2}$,
it suffices to find one reversor. One can verify that $R^{\ts \prime}$
composed with the LEM defined by the permutation
$\gamma : \, i \leftrightarrow -i$ satisfies the conditions of
Lemma~\ref{lem:necessary}, so that $R_{\gamma}'$ is a reversor (and
then also $R_{\gamma}$). Note that $\gamma \in \varSigma^{}_{\! \cA}$
is an involution ($N>2$) or the identity ($N=2$, and also for $N=1$).
Consequently, $R_{\gamma}$ maps $\XX^{}_{\nts N}$ into itself, and is
an involutory reversor for any $N>1$.  This then shows
$\cR (\XX^{}_{\nts N}) = \cS (\XX^{}_{\nts N}) \rtimes C^{}_{2}$.

For $N=2$, all reversors for $(\XX^{}_{\nts N},S)$ are involutions, as
we already saw in Example~\ref{ex:TM}. This remains true for $N>2$.
In fact, with $\pi = (0\, 1 \, 2 \, \ldots \, N \! - \! 1 )$ from
above, one has $\gamma \circ \pi \circ \gamma = \pi^{-1}$ within
$\varSigma^{}_{\! \cA}$, which implies
\[
     (R^{}_{\gamma} \ts F^{}_{\pi})^{2} \, = \, 
     F_{\pi^{-1} } F^{}_{\pi} \, = \, \id \ts .
\]
Since all reversors are of the form $R_{\gamma} \ts G$ with $G$ a
symmetry, where $G = F^{m}_{\pi} \ts S^{n}$ for some $0\leqslant m <
N$ and some $n\in\ZZ$, a simple calculation now shows the claim.
\end{proof}

So far, most cases of reversibility were based upon the reversor $R$
of Eq.~\eqref{eq:reflect}, or a related involutory reversor. Let us
now look into a classic system that fails to be reflection invariant,
but is nevertheless reversible, while displaying a more complicated
phenomenon than that encountered in Example~\ref{ex:flip-rev} or in
Theorem~\ref{thm:cyclic-TM}.

\begin{example}\label{ex:RS}
  Consider the Rudin--Shapiro substitution, formulated as
\[
   \theta^{}_{\mathrm{RS}}  : \quad
       0 \mapsto 02 \ts , \; 1 \mapsto 32 \ts , \;
       2 \mapsto 01 \ts , \;  3 \mapsto 31 
\]
over the alphabet $\cA = \{ 0,1,2,3\}$, thus following \cite{Q}; see
also \cite[Sec.~4.7.1]{TAO}. This primitive substitution rule commutes
with the homeomorphism of $\{0,1,2,3\}^{\ZZ}$ induced by the
permutation $\beta = (03)(12) \in \varSigma^{}_{\! \cA}$, which thus
also gives a non-trivial involutory symmetry of the RS-shift, denoted
by $F^{}_{\! \beta}$.  Indeed, one has
\[
    \cS (\XX^{}_{\mathrm{RS}}) \, = \, \langle S \ts \rangle \times
    \langle F^{}_{\! \beta} \rangle \, \simeq \, \ZZ \times C^{}_{2} \ts .
\] 
To see this, we use Corollary~\ref{coro:equicontinuous} again. Here,
as in earlier examples, we have $c=2$ and $(\ZZ_{2}, +1)$ as MEF. Like
before, the range of $\kappa$ is contained in $\ZZ$, and $\kappa$ is
at most two-to-one on $\cR (\XX^{}_{\mathrm{RS}})$. The LEM
$F^{}_{\! \beta}$ is then the only additional symmetry, and
$\cS (\XX^{}_{\mathrm{RS}})$ is thus as we claim.

Now, it is known that $\theta^{}_{\mathrm{RS}}$ fails to be
palindromic, compare \cite[Rem.~4.12]{TAO} and references given there,
and by the same method one can check that the hull is not reflection
invariant either, so that $R$ from Eq.~\eqref{eq:reflect} cannot be a
reversor here. However, we may consider a letter permuting reversor
$R_{\alpha}$ as defined in Eq.~\eqref{eq:perm-rev}, with
$\alpha^2 = \beta$. As one can check, $\alpha = (0231)$ as well as
$\alpha^3 = (0132)$ are possible, because $R_{\alpha}$ maps
$\XX^{}_{\mathrm{RS}}$ into itself.  This can be seen as
follows. There are $8$ legal words of length $2$, namely $10$, $20$,
$13$ and $23$ together with their reflected versions.  These four
words are the seeds for the four bi-infinite fixed points under
$\theta^{2}_{\mathrm{RS}}$, each of which can be used to define
$\XX^{}_{\mathrm{RS}}$ via an orbit closure. Now, one has
\[
    \ldots 2|0 \ldots \, 
    \stackrel{R^{\ts\prime}_{\alpha}\,}{\longmapsto} \,
    \ldots 2|3 \ldots \, 
    \stackrel{R^{\ts\prime}_{\alpha}\,}{\longmapsto} \,
    \ldots 1|3 \ldots \, 
    \stackrel{R^{\ts\prime}_{\alpha}\,}{\longmapsto} \,
    \ldots 1|0 \ldots \, 
    \stackrel{R^{\ts\prime}_{\alpha}\,}{\longmapsto} \,
    \ldots 2|0 \ldots
\]
with $R^{\ts\prime}_{\alpha} = S R_{\alpha}$, which then implies that
$\XX^{}_{\mathrm{RS}}$ is invariant under $R^{}_{\alpha}$, where
$R_{\alpha}^{2} = F^{}_{\!\beta}$.

Putting the pieces together, we thus have reversibility with a
reversor of order $4$, and by elementary group theory we get
\[
     \cR (\XX^{}_{\mathrm{RS}}) \, = \, \langle S \ts \rangle \rtimes 
       \langle R_{\alpha} \rangle \, \simeq \, \ZZ \rtimes C_{4} \ts .
\]
Note that there exists no involutory reversor in this case.  In fact,
\emph{all} reversors have order $4$ here, in line with \cite[Thm.~2
(2)]{BR-Bulletin}.  \exend
\end{example}

At this point, it should be clear that there is a lot of freedom for
reversors with other (even) orders. We leave further examples to the
curious reader and turn our attention to higher-dimensional shift
actions.

\section{Extended symmetries of  {$\ZZ^d$}-actions}
\label{sec:extend}

Consider a compact topological space $\XX$ under the continuous action
of $d$ commuting and independent elements
$T_{1} , \ldots , T_{d} \in \Aut (\XX)$, where we again assume
\emph{faithfulness} of the action, which means that
$\cG := \langle T_{1}, \ldots , T_{d} \rangle = \langle T_{1} \rangle
\times \dots \times \langle T_{d} \rangle \simeq \ZZ^{d}$
as a subgroup of $\Aut (\XX)$. This gives a topological dynamical
system $(\XX, \ZZ^{d})$.  Clearly, the prime examples we have in mind
are the classic $\ZZ^{d}$-shift over a finite alphabet $\cA$, so
$\XX = \cA^{\ZZ^{d}}$ and $T_{i} = S_{i}$, and its closed shifts with
faithful $\ZZ^{d}$-action. Here, $S_{i}$ is the shift map that acts in
the $i\ts$th direction, so
\begin{equation}\label{eq:def-d-shift}
     (S^{}_{i} x )^{}_{\bs{n}} \, = \, x^{}_{\bs{n} + \bs{e}_{i}}  \ts ,
\end{equation}
where $\bs{n} \in \ZZ^{d}$ and $\bs{e}_{i}$ denotes the canonical unit
vector for the $i$th coordinate direction.

In any case, with $\ZZ^{d} \simeq \cG \subset \Aut (\XX)$ as above, we
can now define
\begin{equation}\label{eq:def-sym}
    \cS (\XX) \, := \, \mathrm{cent}^{}_{\Aut (\XX)} (\cG) 
\end{equation}
as the \emph{symmetry group} of $(\XX, \ZZ^{d})$. As in the
one-dimensional case, this group will generally be huge (and not
amenable), but under certain rigidity mechanisms it can also be as
small as possible, meaning $\cS (\XX) = \cG$. Next, we define the
\emph{extended symmetry group} as
\begin{equation}\label{eq:def-ext}
     \cR (\XX) \, := \, \mathrm{norm}^{}_{\Aut (\XX)} (\cG) \ts ,
\end{equation}
which gives us back the reversing symmetry group for $d=1$ and
faithful $\ZZ$-action. This is one possible and natural 
way to extend the concept of reversing symmetries to this
multi-dimensional setting.  With the definitions of
Eqs.~\eqref{eq:def-sym} and \eqref{eq:def-ext}, both $\cG$ and $\cS
(\XX)$ are normal subgroups of $\cR (\XX)$, while the detailed
relation between $\cS (\XX)$ and $\cR (\XX)$ needs to be analysed
further.

When the shift action is faithful, so $\cG \simeq \ZZ^{d}$,
more can be said about the structure of $\cR (\XX)$. Note that $h \cG
= \cG h$ is only possible when the conjugation action $g \mapsto h g
h^{-1}$ sends a set of generators of $\cG$ to a (possibly different)
set of generators. Since $\cG$ is a free Abelian group of rank
$d$ by our assumption, its automorphism group is 
$\Aut (\cG) \simeq \GL (d,\ZZ)$, the group of integer 
$d \! \times \! d$-matrices $M$ with $\det (M) \in \{ \pm 1 \}$.
Consequently, there exists a group homomorphism
\begin{equation}\label{eq:psi-def}
   \psi \! : \, \cR (\XX) \xrightarrow{\quad} \GL (d,\ZZ)
\end{equation}
that is induced by the conjugation action. Clearly, the kernel of
$\psi$ is $\cS (\XX)$, thus confirming the latter as a
normal subgroup of $\cR (\XX)$.

Let us take a closer look at the full shift, $\XX_{\cA} =
\cA^{\ZZ^{d}}$.
\begin{lem}\label{lem:full}
  The extended symmetry group of the full shift\/ $\XX_{\cA}$ is given
  by
\[
    \cR (\XX_{\cA}) \, = \, \cS (\XX_{\cA}) \rtimes \cH \ts ,
\]   
where\/ $\cH$ is a subgroup of\/ $\Aut (\XX_{\cA})$ with\/ 
$\cH \simeq \GL (d,\ZZ)$.  In particular, $\cH$ can be chosen
so that, under this isomorphism, each matrix\/ $M \in
\GL (d,\ZZ)$ corresponds to a canonical conjugation with an element
from\/ $\Aut (\XX_{\cA})$.
\end{lem}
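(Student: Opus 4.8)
The plan is to realise every $M \in \GL(d,\ZZ)$ by an explicit geometric homeomorphism of $\XX_{\cA}$ and to show that this gives a splitting of the homomorphism $\psi$ from Eq.~\eqref{eq:psi-def}. Concretely, for $M \in \GL(d,\ZZ)$ I would define $f_M \! : \, \XX_{\cA} \xrightarrow{\quad} \XX_{\cA}$ by
\[
   (f_M \ts x)^{}_{\bs{n}} \, := \, x^{}_{M^{-1}\bs{n}} \ts ,
   \qquad \bs{n} \in \ZZ^{d} \ts .
\]
Since $M^{-1}$ is a bijection of $\ZZ^{d}$, precomposition with it permutes the coordinates of the product space $\cA^{\ZZ^{d}}$, so $f_M$ is a homeomorphism with inverse $f_{M^{-1}}$, and a one-line index computation gives $f_{MN} = f_M f_N$. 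The only point that genuinely requires care at this stage is the choice of convention: one has to put $M^{-1}$ (rather than $M$ or $M^{T}$) in the definition so that $M \mapsto f_M$ is an honest homomorphism compatible with the identification $\Aut (\cG) \simeq \GL (d,\ZZ)$ underlying $\psi$.

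Next I would check that each $f_M$ lies in $\cR (\XX_{\cA})$ and compute its image under $\psi$. A direct calculation with the definition \eqref{eq:def-d-shift} yields, for every generator $S^{}_{i}$,
\[
   f_M \ts S^{}_{i} \ts f_M^{-1} \, = \, S^{M \bs{e}_i} \, = \,
   \prod_{j=1}^{d} S_j^{\ts M_{ji}} \, \in \, \cG \ts ,
\]
so $f_M$ normalises $\cG$, and, under the chosen identification, $\psi (f_M) = M$. In particular, $\psi$ is surjective and $M \mapsto f_M$ is a section of $\psi$. Setting $\cH := \{ f_M \mid M \in \GL (d,\ZZ) \}$, the map $\psi|^{}_{\cH}$ is then an isomorphism onto $\GL (d,\ZZ)$, whence $\cH \simeq \GL (d,\ZZ)$ and $\cH \cap \cS (\XX_{\cA}) = \cH \cap \ker \psi = \{ \id \}$.

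It remains to assemble this into the semidirect product statement, using only the already-established facts that $\cS (\XX_{\cA}) = \ker \psi$ is a normal subgroup of $\cR (\XX_{\cA})$ and that $\cH$ is a complement to it. Given any $H \in \cR (\XX_{\cA})$, put $M = \psi (H)$; then $H f_M^{-1} \in \ker \psi = \cS (\XX_{\cA})$, so $H = (H f_M^{-1})\ts f_M \in \cS (\XX_{\cA})\ts \cH$. Together with normality of $\cS (\XX_{\cA})$ and $\cS (\XX_{\cA}) \cap \cH = \{ \id \}$, this is exactly $\cR (\XX_{\cA}) = \cS (\XX_{\cA}) \rtimes \cH$, and the last clause of the lemma is just the observation that the chosen $\cH$ realises $\GL (d,\ZZ)$ through the canonical conjugations $g \mapsto f_M \ts g \ts f_M^{-1}$.

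I do not expect a serious obstacle here: the full shift carries an obvious $\GL (d,\ZZ)$-action by coordinate permutations, so the real content is just the explicit construction of $\cH$ plus a routine splitting-lemma argument. The only delicate point, and the one place where a sign or transpose error could slip in, is fixing the direction conventions once and for all; as a consistency check, for $d = 1$ one has $f^{}_{-1} = R$, the reflection of Eq.~\eqref{eq:reflect}, and the lemma specialises to $\cR (\XX_{\cA}) = \cS (\XX_{\cA}) \rtimes C_{2}$, in agreement with the discussion preceding Fact~\ref{fact:semi}.
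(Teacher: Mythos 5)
Your proposal is correct and follows essentially the same route as the paper: the map $f_M$ is exactly the paper's $h^{}_{\nts M}$ from Eq.~\eqref{eq:def-hm}, the verification that $M \mapsto f_M$ is a homomorphism with $f_M S^{}_i f_M^{-1} = \prod_j S_j^{m^{}_{ji}}$ matches the paper's computation, and the concluding splitting argument via $\ker \psi = \cS(\XX_{\cA})$ is the same as the paper's use of the retraction $\varphi \circ \psi$. The $d=1$ consistency check $f^{}_{-1}=R$ is a nice touch not in the paper, but nothing essential differs.
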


\begin{proof}
  Let $M = (m^{}_{ij})^{}_{1\leqslant i,j \leqslant d}$ be a $\GL
  (d,\ZZ)$-matrix and consider the mapping $x \mapsto h^{}_{\nts M}
  (x)$ defined by
\begin{equation}\label{eq:def-hm}
     \bigl( h^{}_{\nts M} (x)\bigr)_{\bs{n}}  \, := 
     \, x^{}_{M^{-1} \bs{n}} \ts ,
\end{equation}
where $\bs{n} = (n^{}_{1}, \ldots , n^{}_{d})^{T}$ is a column vector
and the matrix $M^{-1}$ acts on it as usual. Clearly, $h^{}_{\nts M}$ is a
continuous mapping of $\XX_{\cA}$ into itself and is invertible, hence
$h^{}_{\nts M} \in \Aut (\XX_{\cA})$. One also checks that this
definition leads to $h^{}_{\nts M} h^{}_{\nts M'} = h^{}_{\nts MM'}$,
whence
$\varphi \! : \, \GL (d,\ZZ) \xrightarrow{\quad} \Aut (\XX_{\cA})$
defined by $M\mapsto h^{}_{\nts M}$ is a group homomorphism.  It is
now routine to check that, given $M$, the induced conjugation action
$g \mapsto h^{}_{\nts M} \ts g \ts\ts h^{-1}_{\nts M}$ on $\cG$ sends
$S^{}_{i}$ to $\prod_{j} S^{m^{}_{ji}}_{j}$, for any
$1\leqslant i \leqslant d$, so $\varphi(M) \in \cR(\XX_{\cA})$.  Note
that the iterated conjugation action is consistent with the above
multiplication rule for the $h^{}_{\nts M}$.

Next, consider $\cH := \varphi (\GL (d,\ZZ))$, which is a subgroup of
$\cR (\XX_{\cA})$. Since $\varphi$ is obviously injective, we have
$\cH \simeq \GL (d, \ZZ)$. The group $\cH$ is \emph{canonical} in the
sense that it does not employ any action on the alphabet, but is
constructed only via operations on the coordinates.

Any element $h\in\cR (\XX_{\cA})$ acts, via the above conjugation map,
on the generators of $\cG$, and this induces a mapping $\psi \! : \, \cR
(\XX_{\cA}) \xrightarrow{\quad} \GL (d,\ZZ) \simeq \cH$, with $h
\mapsto \psi (h) = M^{}_{h}$, which is the group homomorphism from
Eq.~\eqref{eq:psi-def}, with kernel $\cS (\XX_{\cA})$.  So,
$\varphi \circ \psi$ is a group endomorphism of $\cR (\XX_{\cA})$ 
with kernel $\cS (\XX_{\cA})$ and image $\GL (d,\ZZ)$. 
Since $\varphi \circ \psi$ acts as the identity on $\cH$ by
construction, we obtain
\[
    \cR (\XX_{\cA}) \, = \, \cS (\XX_{\cA}) \rtimes \cH 
    \, \simeq \, \cS (\XX_{\cA}) \rtimes \GL (d,\ZZ)
\]
as claimed.
\end{proof}

Since $\GL (1,\ZZ)\simeq C_{2}$, the case $d=1$ gives us back our old
result from Fact~\ref{fact:semi}. For $d>1$, the group $\GL (d,\ZZ)$
is infinite, and $\cR (\XX_{\cA})$ is `big'.  Clearly, we will not
always see such a big group for shifts, and many cases will actually
have an extended symmetry group with \emph{finite} index over
$\cS (\XX)$, meaning $[ \cR (\XX) : \cS (\XX)] < \infty$.  In this
case, only a finite subgroup of $\GL (d,\ZZ)$ will be relevant.  This
is connected with the classification of (maximal) \emph{finite}
subgroups of $\GL (d,\ZZ)$, as considered in crystallography; compare
\cite{Schw}. For instance, in the planar case, one could have the
symmetry group of the square, which is isomorphic with $D^{}_{4}$, but
also the one of the regular hexagon, $D^{}_{6}$.  For general $d$, an
interesting subcase emerges when the shift $\XX$ is such that each
element of $\cR (\XX)$ under $\psi$ maps to a signed permutation of
the generators of $\cG$. For reasons that will become clear shortly,
we call such shifts \emph{hypercubic}.

To expand on this, let $\varSigma_{d}$ denote the permutation group of
$\{ 1, \ldots , d \}$ and define $(\pi, \varepsilon)$ with
$\pi\in \varSigma_{d}$ and $\varepsilon \in \{ \pm 1 \}^{d}$ as the
mapping given by
$S^{\pm 1}_{i} \mapsto S_{\pi(i)}^{\pm \varepsilon_{i}}$ for
$1 \leqslant i \leqslant d$. If compared with our previous
description, this means that $(\pi,\varepsilon)$ corresponds to the
matrix $M$ with
$m^{}_{ij} := \varepsilon^{}_{j} \ts \delta_{i, \pi(j)}$, which is the
standard $d$-dimensional matrix representation \cite{MB} of the
hyperoctahedral group $W^{}_{\! d}$.  It is straightforward to check
that we get an induced multiplication rule for the signed
permutations, namely
\begin{equation}\label{eq:wreath}
      (\sigma, \eta) \circ (\pi, \varepsilon) \, = \,
      (\sigma \circ \pi, \eta^{}_{\pi}\ts \varepsilon) \ts ,
\end{equation}
where $(\eta^{}_{\pi})^{}_{i} := \eta^{}_{\pi (i)}$ and the product of
two sign vectors is componentwise (also known as the Hadamard
product).  The inverse is given by $(\pi,\varepsilon)^{-1} =
(\pi^{-1},\varepsilon^{}_{\pi^{-1}})$.  The following property is
well-known; see \cite{MB} and references therein.

\begin{fact}\label{fact:cube}
  For\/ $d\in \NN$, there are\/ $2^{d} \ts d!$ signed
  permutations. Under the multiplication rule of
  Eq.~\eqref{eq:wreath}, they form a group, the wreath product\/
  $W^{}_{\! d} = C^{}_{2} \wr \varSigma_{d} \simeq \varSigma_{d}
  \rtimes C_{2}^{d}$, which is also known as the symmetry group of the
  $d$-dimensional cube or the hyperoctahedral group.  \qed
\end{fact}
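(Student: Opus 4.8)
The plan is to verify the three parts of the statement separately --- that the signed permutations form a group, that this group is the wreath product $C^{}_{2}\wr\varSigma_{d}$, and that it coincides with the symmetry group of the $d$-cube --- each of which reduces to elementary bookkeeping once the conventions are fixed. Counting is immediate: a signed permutation $(\pi,\varepsilon)$ is determined by the independent choice of $\pi\in\varSigma_{d}$ (which gives $d!$ possibilities) and of $\varepsilon\in\{\pm1\}^{d}$ (which gives $2^{d}$), so there are $2^{d}\ts d!$ of them. For the group axioms I would argue directly from Eq.~\eqref{eq:wreath}: associativity follows from the identity $(\zeta^{}_{\sigma})^{}_{\pi}=\zeta^{}_{\sigma\pi}$ together with the fact that reindexing a sign vector by a permutation distributes over the componentwise product, so that both bracketings of a triple product equal $(\rho\sigma\pi,\ \zeta^{}_{\sigma\pi}\,\eta^{}_{\pi}\,\varepsilon)$; the neutral element is $(\id,\eins)$ with $\eins=(1,\dots,1)$; and $(\pi^{-1},\varepsilon^{}_{\pi^{-1}})$ is checked to be a two-sided inverse (using that signs square to $1$). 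Hence we obtain a group of order $2^{d}\ts d!$.

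For the wreath-product description, recall that by definition $C^{}_{2}\wr\varSigma_{d}$ is the semidirect product $C_{2}^{d}\rtimes\varSigma_{d}$ in which $\varSigma_{d}$ acts on the base group $C_{2}^{d}\simeq\{\pm1\}^{d}$ by permuting coordinates. I would exhibit the bijection sending $(\pi,\varepsilon)$ to the corresponding pair $(\varepsilon,\pi)$ and check that it intertwines Eq.~\eqref{eq:wreath} with the standard wreath-product multiplication; this is exactly the place where one must be careful about left-versus-right action conventions, but no genuine difficulty arises. The alternative presentation $\varSigma_{d}\rtimes C_{2}^{d}$ then follows by identifying $C_{2}^{d}$ with the normal subgroup $\{(\id,\varepsilon)\}$ of pure sign changes and $\varSigma_{d}$ with the complementary subgroup $\{(\pi,\eins)\}$.

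For the cube, associate to $(\pi,\varepsilon)$ the monomial matrix $M$ with $m^{}_{ij}=\varepsilon^{}_{j}\ts\delta^{}_{i,\pi(j)}$, as in the discussion preceding the statement. One checks that $M$ is orthogonal and that $(\pi,\varepsilon)\mapsto M$ is an injective homomorphism --- the composition rule for the underlying maps $S^{}_{i}\mapsto S^{\varepsilon_{i}}_{\pi(i)}$ being precisely Eq.~\eqref{eq:wreath}, consistently with Lemma~\ref{lem:full} --- so that its image is a group of $2^{d}\ts d!$ many $\pm1$-monomial matrices, each of which clearly maps the cube $[-1,1]^{d}$ onto itself. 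Conversely, any linear symmetry of the cube is an orthogonal map that permutes its $2d$ facets $\{x : x^{}_{i}=\pm1\}$ and therefore permutes the $d$ coordinate axes up to sign, hence is a $\pm1$-monomial matrix of the above form. Thus the signed permutations exhaust the symmetry group of the $d$-cube, which yields $W^{}_{\! d}\simeq C^{}_{2}\wr\varSigma_{d}$ and completes the argument. The only real obstacle throughout is notational --- keeping the various action conventions aligned so that the maps between the three models are homomorphisms rather than anti-homomorphisms --- which is presumably why the literature, like us, records this merely as a fact.
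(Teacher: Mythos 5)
Your verification is correct, but there is nothing in the paper to compare it against: Fact~\ref{fact:cube} is recorded as well known, closed immediately with a q.e.d.\ box, and delegated entirely to the literature via the citation \cite{MB}. What you have written is therefore a self-contained proof of a statement the paper does not prove, and its substance checks out. The count $2^{d}\ts d!$ is immediate; associativity of Eq.~\eqref{eq:wreath} rests precisely on the reindexing identity $(\zeta^{}_{\sigma})^{}_{\pi}=\zeta^{}_{\sigma\pi}$, which holds because $\bigl((\zeta^{}_{\sigma})^{}_{\pi}\bigr)_{i}=\zeta^{}_{\sigma(\pi(i))}$; the stated inverse is two-sided since $(\varepsilon^{}_{\pi^{-1}})^{}_{\pi}=\varepsilon$; the sign-change subgroup $\{(\id,\varepsilon)\}$ is normal, as conjugation by $(\pi,\eins)$ carries $(\id,\varepsilon)$ to $(\id,\varepsilon^{}_{\pi^{-1}})$, which yields the wreath/semidirect description --- and you are right to identify $C_{2}^{d}$, not $\varSigma_{d}$, as the normal factor, despite the order in which the paper writes $\varSigma_{d}\rtimes C_{2}^{d}$. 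The monomial-matrix assignment $m^{}_{ij}=\varepsilon^{}_{j}\ts\delta^{}_{i,\pi(j)}$ is indeed a faithful homomorphism compatible with Eq.~\eqref{eq:wreath}, as one confirms from $(MN)^{}_{ik}=\eta^{}_{\pi(k)}\ts\varepsilon^{}_{k}\ts\delta^{}_{i,\sigma(\pi(k))}$. The only step you elide in the cube identification is that an arbitrary isometry preserving $[-1,1]^{d}$ fixes the centroid and is hence linear, after which your facet-permutation argument applies; this is standard and does not affect the conclusion. In short: correct, more informative than the paper, at the cost of spelling out bookkeeping the paper deliberately outsources.
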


Note that $W^{}_{1} \simeq C^{}_{2}$ and $W^{}_{2} \simeq D^{}_{4}$.
The latter is the dihedral group with $8$ elements, which has an
obvious interpretation as the symmetry group of the square. More
generally, the maximal finite subgroups of $\GL (d,\ZZ)$ play a
special role in this setting. Indeed, as a consequence of Selberg's
lemma, one knows that all subgroups of $\GL (d,\ZZ)$ are
virtually torsion-free, which gives the following result.

\begin{fact}\label{fact:torsion}
  A subgroup of\/ $\GL (d,\ZZ)$ is finite if and only if it is
  periodic, i.e., if and only if every element of it is of finite
  order. Consequently, every infinite subgroup of\/ $\GL (d,\ZZ)$
  contains at least one element of infinite order. \qed
\end{fact}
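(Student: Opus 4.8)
The plan is to establish the non-trivial direction, namely that a \emph{periodic} subgroup of $\GL(d,\ZZ)$ is automatically finite; the reverse implication is immediate, since every element of a finite group has finite order, and the closing assertion is just the contrapositive of the resulting equivalence.

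First I would record the single arithmetic input that carries the content, namely that $\GL(d,\ZZ)$ is virtually torsion-free. Concretely, for $m\geqslant 3$ the principal congruence subgroup
\[
   \varGamma(m) \, := \, \ker\bigl( \GL(d,\ZZ) \xrightarrow{\quad} \GL(d,\ZZ/m\ZZ) \bigr) \ts ,
\]
i.e.\ the kernel of reduction of entries modulo $m$, is torsion-free. This is Minkowski's lemma, the explicit form of Selberg's lemma that applies to the finitely generated linear group $\GL(d,\ZZ)$; a proof proceeds by reducing a hypothetical torsion element to one of prime order $p$, writing it as $\id + m^{k}C$ with $C$ an integer matrix whose entries are not all divisible by $m$, expanding $(\id+m^{k}C)^{p}$, and comparing the $m$-adic valuations of the resulting terms; see, e.g., \cite{Schw} for background. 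Moreover, $\varGamma(m)$ is a normal subgroup of finite index in $\GL(d,\ZZ)$, because $[\GL(d,\ZZ):\varGamma(m)]$ divides $\lvert \GL(d,\ZZ/m\ZZ) \rvert < \infty$. From here on, fix $m=3$.

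Now let $G$ be a periodic subgroup of $\GL(d,\ZZ)$, and consider $G \cap \varGamma(3)$. Being a subgroup of $\varGamma(3)$, it is torsion-free; being a subgroup of $G$, it is periodic. A group that is at the same time torsion-free and periodic is trivial, so $G \cap \varGamma(3) = \{ \id \}$. But $G \cap \varGamma(3)$ is exactly the kernel of the reduction homomorphism $\GL(d,\ZZ) \to \GL(d,\ZZ/3\ZZ)$ restricted to $G$, so this restriction is injective and embeds $G$ into the finite group $\GL(d,\ZZ/3\ZZ)$. Hence $G$ is finite, which completes the equivalence. Taking contrapositives, an infinite subgroup of $\GL(d,\ZZ)$ cannot be periodic, hence contains an element of infinite order.

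The main obstacle --- indeed the only place where something genuinely happens --- is the torsion-freeness of $\varGamma(3)$; everything after that is the one-line intersection argument. The subtlety to watch in Minkowski's valuation computation is the case $p=3$ (more generally $p \mid m$), which is precisely why the level must be taken $\geqslant 3$: the level-$2$ congruence subgroup does contain the torsion element $-\id$, so the naive argument would fail there.
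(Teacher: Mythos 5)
Your proof is correct and follows essentially the same route the paper takes: the paper justifies the fact by citing Selberg's lemma for the virtual torsion-freeness of subgroups of $\GL(d,\ZZ)$, and your argument simply makes this concrete via Minkowski's lemma on the level-$3$ congruence subgroup before running the same intersection-and-embedding step. The observation about why level $2$ fails (because $-\id$ lies in $\varGamma(2)$) is a nice touch, but the overall strategy is the standard one the authors have in mind.
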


Let us pause to mention an important subtlety here. Though we have
identified a canonical group $\cH$ in Lemma~\ref{lem:full} that is
isomorphic with $\GL (d,\ZZ)$, it is far from unique. In fact, we
could have chosen to include suitable permutations in the alphabet to
augment the group elements, thus forming another group, $\cH'$ say,
that can replace $\cH$ in the statement of the group structure. This
showed up in Example~\ref{ex:flip-rev} and in the proof of
Theorem~\ref{thm:cyclic-TM}. It will also become important for
our later analysis.

For shifts $\XX \subset \XX_{\cA}$, we are now in an analogous (though
more complex) situation to the one-dimensional case. In particular,
for hypercubic shifts say, we have to investigate which elements from
$W^{}_{\! d}$ leave $\XX$ invariant, or, if any element fails in this
respect, whether some combination with a symmetry of $\XX_{\cA}$ steps
in instead.  Here, it may happen that our canonical choice of $\cH$ as
the representative of $W^{}_{\! d}$ fails to capture all extended
symmetries, as we saw in previous examples.

In general, the extended symmetry group need \emph{not} be a semi-direct
product, as is clear from the existence of non-symmorphic space groups
(due to the possibility of glide reflections already for $d=2$).
Still, the most common instance of the extension can be stated as
follows, where
$\psi \! : \, \cR (\XX) \xrightarrow{\quad} \GL (d, \ZZ)$ is the
homomorphism from Eq.~\eqref{eq:psi-def} for faithful shift actions.

\begin{prop}\label{prop:remains}
  Let\/ $\XX \subseteq \cA^{\ZZ^{d}}$ be a closed shift with
  faithful\/ $\ZZ^{d}$-action and symmetry group\/ $\cS (\XX)$.
  Assume further that\/ $\cR (\XX)$ contains a subgroup\/ $\cH$ that
  satisfies\/ $\cH \simeq \psi (\cH)$ together with\/ $\psi (\cH) =
  \psi (\cR (\XX))$. Then, one has the short exact sequence
\[
   1 \, \xrightarrow{\quad} \, \cS (\XX)
   \, \xrightarrow{\, \mathrm{id} \,} \,
   \cR (\XX) \, \xrightarrow{\, \psi \,} \,
   \cH' = \psi (\cH) \, \xrightarrow{\quad} \, 1 \ts ,
\]
and the extended symmetry group of\/ $(\XX, \ZZ^{d})$ is given as
\[
    \cR (\XX) \, = \, \cS (\XX) \rtimes \cH \ts ,
\]   
where the semi-direct product structure is analogous to that of
Lemma~\emph{\ref{lem:full}}.
\end{prop}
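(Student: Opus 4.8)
The plan is to exhibit the homomorphism $\psi$ as a surjection $\cR (\XX) \twoheadrightarrow \cH'$ that splits through the subgroup $\cH$, and then read off both the short exact sequence and the semidirect decomposition by the standard splitting argument.

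First I would invoke the structural facts about $\psi$ already available for faithful $\ZZ^{d}$-actions: the conjugation action of $\cR (\XX)$ on $\cG \simeq \ZZ^{d}$ induces the homomorphism $\psi \! : \, \cR (\XX) \xrightarrow{\ } \GL (d,\ZZ)$ of Eq.~\eqref{eq:psi-def}, with $\ker (\psi) = \cS (\XX)$; in particular $\cS (\XX)$ is normal in $\cR (\XX)$, and $\mathrm{im}(\psi) = \psi (\cR (\XX))$. By hypothesis, $\psi (\cR (\XX)) = \psi (\cH) =: \cH'$, so $\psi$ corestricts to a surjection $\cR (\XX) \twoheadrightarrow \cH'$ with kernel $\cS (\XX)$. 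This is precisely the asserted sequence $1 \to \cS (\XX) \xrightarrow{\mathrm{id}} \cR (\XX) \xrightarrow{\psi} \cH' \to 1$: exactness at $\cS (\XX)$ and at $\cR (\XX)$ are immediate from $\ker (\psi) = \cS (\XX)$, and exactness at $\cH'$ is the surjectivity just noted.

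Next I would use the remaining hypothesis $\cH \simeq \psi (\cH)$, read (as is forced in context) as the statement that $\psi$ restricted to $\cH$ is injective, equivalently that $\cH \cap \cS (\XX) = \{ \mathrm{id} \}$. Together with $\psi (\cH) = \cH'$ this makes the composite $\cH \hookrightarrow \cR (\XX) \xrightarrow{\psi} \cH'$ an isomorphism, so $\cH$ is a complement to $\ker (\psi) = \cS (\XX)$. The splitting lemma then yields $\cR (\XX) = \cS (\XX) \rtimes \cH$ by the usual chase: given $f \in \cR (\XX)$, let $h \in \cH$ be the unique element with $\psi (h) = \psi (f)$; then $G := f h^{-1}$ lies in $\ker (\psi) = \cS (\XX)$, whence $f = G h$, and uniqueness of the factorisation follows from $\cS (\XX) \cap \cH = \{ \mathrm{id} \}$. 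The $\cH$-action defining the product is conjugation inside $\Aut (\XX)$, which preserves $\cS (\XX)$ by normality; when $\XX = \XX_{\cA}$ and $\cH$ is the canonical copy of $\GL (d,\ZZ)$, this specialises to the conjugation $G \mapsto h^{}_{\nts M} \ts G \ts h^{-1}_{\nts M}$ of Lemma~\ref{lem:full}, which is the sense in which the two semidirect structures are analogous.

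I do not expect a genuine obstacle here: once $\psi$, its kernel $\cS (\XX)$, and the splitting subgroup $\cH$ are in place, the argument is formal. The one point that deserves care is the interpretation of the hypothesis $\cH \simeq \psi (\cH)$ --- for infinite groups a surjective homomorphism need not be an isomorphism, so one must make explicit that the intended meaning is that $\psi|^{}_{\cH}$ \emph{itself} realises the isomorphism (equivalently $\cH \cap \cS (\XX)$ is trivial), which is what makes $\cH$ an honest complement rather than merely an abstractly isomorphic group. With that clarification the proof reduces to the short diagram chase above.
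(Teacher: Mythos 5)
Your proof is correct and follows essentially the same route as the paper: identify $\cS(\XX)$ as $\ker\psi$, use $\psi(\cH)=\psi(\cR(\XX))$ for the exact sequence, and use the splitting through $\cH$ to obtain the semidirect product. Your explicit remark that $\cH\simeq\psi(\cH)$ must be read as ``$\psi|^{}_{\cH}$ is itself an isomorphism onto $\cH'$'' (equivalently $\cH\cap\cS(\XX)=\{\id\}$) is exactly the reading the paper's proof relies on when it asserts that the composed map $\psi^{\ts\prime}$ restricts to the identity on $\cH$.
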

   
\begin{proof}
  By assumption, we have $\cG = \ZZ^{d}$, where $\cG$ is the group
  generated by the shift action on $\XX$, and $\cR (\XX) =
  \mathrm{norm}^{}_{\Aut (\XX)} (\cG)$ as in Eq.~\eqref{eq:def-ext}.
  Clearly, $\psi$ is well-defined, with kernel $\cS (\XX)$.

  Let $\cH' = \psi (\cR (\XX))$. By assumption, we have a subgroup
  $\cH$ of $\cR (\XX)$ with $\cH \simeq \cH' = \psi (\cH)$. Via
  composition, we then know that we also have a group homomorphism
  $\psi^{\ts\prime} \! : \, \cR (\XX) \xrightarrow{\quad} \cH$ such
  that $\cS (\XX)$ is the kernel of $\psi^{\ts\prime}$ and that the
  restriction of $\psi^{\ts\prime}$ to $\cH$ is the identity. This
  implies the semi-direct product structure as claimed.
\end{proof}

When the shift from Proposition~\ref{prop:remains} is hypercubic,
$\cH$ is also isomorphic with a subgroup of\/ $W^{}_{\! d}$.  There
are several obvious variants of this statement, and one would like to
know more about the possible group structures. One tool will be the
following generalisation of Theorem~\ref{thm:equicontinuous} to this
setting.

\begin{thm}\label{thm:equi-general}
  Let\/ $(\XX,\ZZ^{d})$ be a shift with faithful shift action and at
  least one dense orbit. Let\/ $\AAA$ be its MEF, with $\phi \! : \,
  \XX \xrightarrow{\quad} \AAA$ the corresponding factor map, where
  the induced\/ $\ZZ^{d}$-action has dense range.

  Then, there is a group homomorphism $\kappa \! : \, \cS (\XX)
  \xrightarrow{\quad} \AAA$ such that
\[
   \phi (G(x)) \, = \, \kappa(G) + \phi (x)
\] 
holds for all $x\in \XX$ and $G \in \cS (\XX)$, so any symmetry\/ $G$
induces a unique mapping\/ $G^{}_{\! \AAA}$ on the Abelian group\/
$\AAA$ that acts as an addition by\/ $\kappa (G)$.

Moreover, if\/ $\kappa( \ZZ^{d})$ is a free Abelian group,
there is an extension of\/ $\kappa$ to a\/ $1$-cocycle of the
action of\/ $\cR (\XX)$ on\/ $\AAA$ such that
\[
    \kappa (GH) \, = \, \kappa (G) + \zeta(G) 
     \bigl( \kappa (H)\bigr)
\]
for all\/ $G,H\in\cR (\XX)$, where\/ $\zeta \! : \, \cR (\XX)
\xrightarrow{\quad} \Aut (\AAA)$ is a group homomorphism.  In
particular, any\/ $G\in\cR (\XX)$ induces a unique mapping on\/ $\AAA$
that acts as\/ $z \mapsto \kappa (G) + \zeta(G) (z)$.
\end{thm}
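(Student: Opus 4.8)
The plan is to mirror the proof of Theorem~\ref{thm:equicontinuous} as closely as possible, replacing the single generator $S$ by the tuple $(S_1,\ldots,S_d)$ and the role of $\mathbb Z$ by $\mathbb Z^d$. First I would fix a point $x\in\XX$ with dense $\ZZ^d$-orbit (available by hypothesis), and recall that, since $\AAA$ is the MEF realised as a group rotation with dense range, the induced action is translation by a homomorphic image of $\ZZ^d$: there is a homomorphism $n\mapsto a(n)$ from $\ZZ^d$ into $\AAA$ with $T^{}_{\bs n}(z)=z+a(\bs n)$ and $\{z+a(\bs n)\mid \bs n\in\ZZ^d\}$ dense. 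For a fixed $G\in\cS(\XX)$, set $f(x):=\phi(Gx)-\phi(x)\in\AAA$. Because $G$ commutes with every $S_i$, one has $\phi(G S^{}_{\bs n}x)=\phi(S^{}_{\bs n}Gx)=\phi(Gx)+a(\bs n)$ while $\phi(S^{}_{\bs n}x)=\phi(x)+a(\bs n)$, so $f$ is constant on the orbit of $x$; by density and continuity it extends to the constant function with value $\kappa(G):=\phi(Gx)-\phi(x)$ on all of $\XX$. This gives $\phi(G(x))=\kappa(G)+\phi(x)$ for all $x$, and concatenating two such commuting squares yields the homomorphism property $\kappa(GH)=\kappa(G)+\kappa(H)$, with $\kappa(S_i)=a(\bs e_i)$.

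Next I would handle a general $G\in\cR(\XX)\setminus\cS(\XX)$. Such a $G$ conjugates the generators of $\cG$ by the matrix $M=\psi(G)\in\GL(d,\ZZ)$, i.e.\ $G S^{}_{\bs n} G^{-1}=S^{}_{M\bs n}$ (using the convention fixed in Lemma~\ref{lem:full}). The natural guess, as in the one-dimensional reversor case, is that $G$ induces an affine map $z\mapsto \kappa(G)+\zeta(G)(z)$ on $\AAA$, where $\zeta(G)$ is the automorphism of $\AAA$ canonically induced by $M$ on the image subgroup $\kappa(\ZZ^d)=\langle a(\bs e_1),\ldots,a(\bs e_d)\rangle\subseteq\AAA$. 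To make this precise I would again put $f(x):=\phi(Gx)-\zeta(G)(\phi(x))$ and compute $f(S^{}_{\bs n}x)$. We get $\phi(G S^{}_{\bs n}x)=\phi(S^{}_{M\bs n}Gx)=\phi(Gx)+a(M\bs n)$, whereas $\zeta(G)(\phi(S^{}_{\bs n}x))=\zeta(G)(\phi(x))+\zeta(G)(a(\bs n))$. These differences cancel provided $\zeta(G)(a(\bs n))=a(M\bs n)$ for all $\bs n\in\ZZ^d$; this is precisely the compatibility that makes $\zeta(G)$ well-defined on the subgroup $\kappa(\ZZ^d)$ generated by the $a(\bs e_i)$, and then the unique continuous extension of the translation $z\mapsto z+a(\bs m)$ along $M$ determines $\zeta(G)$ on its closure, which is all of $\AAA$ by density. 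So $f$ is orbit-constant, extends to the constant $\kappa(G):=\phi(Gx)-\zeta(G)(\phi(x))$, and $\phi(G(x))=\kappa(G)+\zeta(G)(\phi(x))$ follows on $\XX$. The cocycle identity $\kappa(GH)=\kappa(G)+\zeta(G)(\kappa(H))$ and the homomorphism property of $\zeta$ then drop out of composing the affine actions $z\mapsto\kappa(G)+\zeta(G)(z)$, exactly as in the abelian $d=1$ argument.

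The step I expect to be the real obstacle, and the one the freeness hypothesis on $\kappa(\ZZ^d)$ is there to address, is the well-definedness and continuity of $\zeta(G)$. A priori, $M\in\GL(d,\ZZ)$ only acts on $\ZZ^d$; to transport it to an automorphism of $\AAA$ we need the assignment $a(\bs e_i)\mapsto a(M\bs e_i)$ to respect \emph{all} relations among the $a(\bs e_i)$ in $\AAA$, not just the trivial ones. If $\kappa(\ZZ^d)$ is free of rank $d$, the map $\bs n\mapsto a(\bs n)$ is injective, so $M$ permutes the relation lattice trivially and $\zeta(G)$ is a well-defined automorphism of $\kappa(\ZZ^d)$; one then checks it is uniformly continuous (it is an isometry for a translation-invariant metric on the monothetic group, or simply observe it is the restriction to a dense subgroup of the group automorphism $\AAA\to\AAA$ induced by $M$ through the MEF functoriality) and extends uniquely to $\AAA$. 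Without freeness, $M$ could fail to preserve the relations and no such $\zeta(G)$ would exist — this is exactly why the theorem carries that assumption. I would also note the consistency point: because $\varphi\circ\psi$ of Lemma~\ref{lem:full} and the conjugation action compose associatively, the $\zeta(G)$ patch together into a genuine homomorphism $\zeta\colon\cR(\XX)\to\Aut(\AAA)$ with kernel $\cS(\XX)$, completing the proof.
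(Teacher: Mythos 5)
Your proposal follows the paper's proof essentially verbatim: the symmetry part repeats the orbit-constancy argument of Theorem~\ref{thm:equicontinuous}, and for extended symmetries you define $\zeta(G)$ on $\kappa(\ZZ^{d})$ via the conjugation matrix $M=\psi(G)$, invoke freeness for well-definedness, extend to $\AAA$ by density, and set $\kappa(G)=\phi(Gx)-\zeta(G)(\phi(x))$, which is exactly the paper's route. The only point where you are slightly narrower is that your well-definedness argument is phrased for $\kappa(\ZZ^{d})$ of full rank $d$, whereas the theorem only assumes it is free (possibly of rank $m<d$); the paper deals with this by observing that the $a_{i}$ then contain a group basis with the remaining relations controlled by torsion-freeness, and the MEF-functoriality alternative you mention would cover that case as well.
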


\begin{proof}
  Let us assume that $\AAA \ne \{ 0 \}$, as the entire statement
  is trivial otherwise.
  The claim on the symmetries follows from the same line of arguments
  employed in the proof of Theorem~\ref{thm:equicontinuous}, where
  we use Eq.~\eqref{eq:def-sym} with $\cG = \ZZ^{d}$. 

  To continue, let us define $a^{}_{i} = \kappa (S_{i})$ for
  $1 \leqslant i \leqslant d$. We know that, for any $z\in\AAA$, the
  set
  $\big\{ z + \sum_{i} n^{}_{i} \ts a^{}_{i} \mid n^{}_{i} \in \ZZ
  \big\}$
  is dense in $\AAA$. In particular, the subgroup $\kappa (\ZZ^{d})$
  is dense in $\AAA$. When $\kappa (\ZZ^d)$ is a free Abelian group,
  one has $\kappa (\ZZ^{d}) \simeq \ZZ^{m}$ for some
  $1 \leqslant m \leqslant d$.

  If $R$ is an extended symmetry, its conjugation action on the shift
  $S_{i}$ is once again given by $S^{}_{i} \mapsto \prod_{j}
  S_{j}^{m^{}_{ji}}$, where $M = \psi (R) \in\GL(d,\ZZ)$ with $\psi$
  from Eq.~\eqref{eq:psi-def}. With
  $S^{\ts \bs{n}} := \prod_{i} S^{\ts n_{i}}_{i}$, one can check that
  $ R \, S^{\ts \bs{n}} R^{-1}  =  S^{\ts M\nts \bs{n}} $.  
   This implies $\phi(R S^{\bs{n}} x) = \phi (R x) + \sum_{i,j}
   a^{}_{i} \ts m ^{}_{ij} \ts n^{}_{j}$, which is to be compared with
   $\phi ( S^{\bs{n}} x) = \phi (x) + \sum_{i} a^{}_{i}\ts n^{}_{i}$.

   Under our assumption,
   $\{ a^{}_{i} \mid 1 \leqslant i \leqslant d \}$ contains a group
   basis of $\kappa (\ZZ^{d}) \simeq \ZZ^{m}$. If $m<d$, some of the
   $a^{}_{j}$ are $0$, without further consequences because
   $\kappa (\ZZ^{d})$ is torsion-free. We can thus set
   $\zeta^{}_{R} (a^{}_{i}) = \sum_{j} a^{}_{j} \, m^{}_{ji}$, which
   defines an element of $\Aut (\kappa(\ZZ^{d}))$ that extends to an
   element of $\Aut (\AAA)$, the latter being zero-dimensional (or
   discrete) in this case.  It is easy to check that
   $G \mapsto \zeta^{}_{G}$ defines a group homomorphism $\zeta$ as
   claimed.

  Now, one can define $\kappa (R) = \phi (Rx) - \zeta^{}_{R} \bigl( \phi
  (x)\bigr)$ and check that this is the required extension. The
  induced action of $R$ on $\AAA$ is then given by $ z \mapsto \kappa
  (R) + \zeta^{}_{R} (z)$ as stated.
\end{proof}

We also need an analogue of Corollary~\ref{coro:equicontinuous},
with focus on the extended symmetries.

\begin{coro}\label{coro:equi-mult}
  Let the setting for a shift\/ $(\XX, \ZZ^d)$ be that of
  Theorem~$\ref{thm:equi-general}$, and define the covering number\/
  $c := \min \{ \card(\phi^{-1}(z)) \mid z \in \AAA \}$.  If\/ $c$ is
  finite, one has that
\begin{enumerate}\itemsep=2pt
\item for each\/ $d\geqslant c$,
\[ 
    \{ z \in \AAA \mid \card(\phi^{-1}(z)) =d \} 
     \, = \, \zeta(G) \ts \bigl(\{z \in
     \AAA \mid \card(\phi^{-1}(z)) = d \}\bigr) + \kappa(G)
\]
holds for each\/ $G\in \cR (\XX)$, and that
\item for each\/ $M\in \GL(d,\ZZ)$, the mapping\/
  $\kappa \! : \, \{ G\in \cR (\XX) \mid \psi(G) = M \}
  \xrightarrow{\quad} \AAA$ is at most\/ $c$-to-one.
  \qed
\end{enumerate} 
\end{coro}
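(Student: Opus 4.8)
The plan is to mimic the proof of Corollary~\ref{coro:equicontinuous}, which in turn referred back to \cite[Thm.~3.3]{CQY}, but now keeping track of the twist coming from the homomorphism $\zeta$ supplied by Theorem~\ref{thm:equi-general}. So first I would fix $G \in \cR(\XX)$ and recall from Theorem~\ref{thm:equi-general} that the induced map $G^{}_{\AAA}$ on $\AAA$ is the affine bijection $z \mapsto \kappa(G) + \zeta(G)(z)$, where $\zeta(G) \in \Aut(\AAA)$. Since $\phi \circ G = G^{}_{\AAA} \circ \phi$ and $G$ is a homeomorphism of $\XX$, the fibre structure of $\phi$ is transported by $G^{}_{\AAA}$: concretely, $G$ restricts to a bijection $\phi^{-1}(z) \to \phi^{-1}(G^{}_{\AAA}(z))$ for every $z\in\AAA$, so $\card(\phi^{-1}(z)) = \card(\phi^{-1}(G^{}_{\AAA}(z)))$. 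Applying this with $z$ replaced by $(G^{}_{\AAA})^{-1}(w)$ shows that the level set $L_{d} := \{ z\in\AAA \mid \card(\phi^{-1}(z)) = d\}$ satisfies $G^{}_{\AAA}(L_{d}) = L_{d}$, i.e. $L_{d} = \zeta(G)(L_{d}) + \kappa(G)$. That is exactly statement (1); it is valid for every $d$ (and the hypothesis $c$ finite is what guarantees $L_{c}$ is nonempty, so the sets are genuinely being constrained rather than being vacuously equal).

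For part (2), fix a matrix $M \in \GL(d,\ZZ)$ and consider the coset $\cR_{M} := \{ G \in \cR(\XX) \mid \psi(G) = M\}$, which (if nonempty) is a coset of $\cS(\XX) = \ker\psi$. On this coset the homomorphism $\zeta$ is constant — write $\zeta^{}_{M}$ for its common value — because $\zeta$ factors through $\psi$ (this is visible from the construction of $\zeta$ in the proof of Theorem~\ref{thm:equi-general}, where $\zeta^{}_{G}$ depends only on $M = \psi(G)$). Now suppose $G, G' \in \cR_{M}$ have $\kappa(G) = \kappa(G')$. Then $G^{}_{\AAA}$ and $(G')^{}_{\AAA}$ are the same affine map of $\AAA$, so $(G^{-1}G')^{}_{\AAA} = \id^{}_{\AAA}$, meaning the symmetry $H := G^{-1}G' \in \cS(\XX)$ has $\kappa(H) = 0$ and $\zeta(H) = \id$. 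The cocycle/homomorphism bookkeeping then gives $\kappa(G') = \kappa(G) + \zeta(G)(\kappa(H)) = \kappa(G)$ consistently, but more to the point $H^{}_{\AAA} = \id$ forces $H$ to preserve each fibre $\phi^{-1}(z)$ setwise. Picking a point $z$ with $\card(\phi^{-1}(z)) = c$ (which exists since $c$ is finite and is attained), $H$ permutes the $c$-element set $\phi^{-1}(z)$, and more generally $H$ acts as a permutation of each fibre; the standard argument of \cite[Thm.~3.3]{CQY} shows that a symmetry lying in $\ker\kappa$ is determined by its action on a single minimal fibre, so at most $c$ such $H$ exist. Hence $\kappa$ restricted to $\cR_{M}$ is at most $c$-to-one, which is statement (2).

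The only genuinely delicate point is the claim that $\ker\kappa \cap \ker\zeta$ — the symmetries inducing the identity on $\AAA$ — is finite of order at most $c$, and this is where I would lean on \cite{CQY}: the idea is that such an $H$ fixes the MEF pointwise, hence maps each $\phi$-fibre to itself, and by minimality (or at least by the presence of a dense orbit, together with continuity of $H$) its action on one fibre of minimal cardinality $c$ determines it everywhere, giving the injection $H \mapsto H|_{\phi^{-1}(z_{0})} \in \varSigma_{c}$ into a set of size dividing $c!$ — but in fact the relevant orbit argument shows the sharper bound $c$, exactly as in the one-dimensional case. Once that lemma is in hand, both parts are immediate, so I would simply write the proof as ``these are the obvious generalisations of the arguments in the proof of Corollary~\ref{coro:equicontinuous}, now using the affine action $z \mapsto \kappa(G) + \zeta(G)(z)$ from Theorem~\ref{thm:equi-general} in place of the translation or reflection there,'' and spell out only the coset argument for (2) and the transport-of-fibres argument for (1).

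\begin{proof}
  Both assertions are the natural analogues of
  Corollary~\ref{coro:equicontinuous}, obtained by replacing the
  translation (respectively reflection) action on $\AAA$ by the affine
  action furnished by Theorem~\ref{thm:equi-general}.

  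\emph{(1)} Fix $G \in \cR(\XX)$. By Theorem~\ref{thm:equi-general},
  the induced map $G^{}_{\AAA} \! : \, \AAA \xrightarrow{\quad} \AAA$
  is the bijection $z \mapsto \kappa(G) + \zeta(G)(z)$, and it
  satisfies $\phi \circ G = G^{}_{\AAA} \circ \phi$. Since $G$ is a
  homeomorphism of $\XX$, it restricts to a bijection between
  $\phi^{-1}(z)$ and $\phi^{-1}\bigl(G^{}_{\AAA}(z)\bigr)$ for every
  $z\in\AAA$, so
  $\card\bigl(\phi^{-1}(z)\bigr) =
  \card\bigl(\phi^{-1}(G^{}_{\AAA}(z))\bigr)$. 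Writing
  $L^{}_{d} := \{ z\in\AAA \mid \card(\phi^{-1}(z)) = d\}$, this says
  $G^{}_{\AAA}(L^{}_{d}) = L^{}_{d}$, i.e.
  $L^{}_{d} = \zeta(G)\bigl(L^{}_{d}\bigr) + \kappa(G)$, as claimed;
  for $d\geqslant c$ the set $L^{}_{d}$ need not be empty, while for
  $d=c$ it is nonempty by the choice of $c$.

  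\emph{(2)} Fix $M\in\GL(d,\ZZ)$ and let
  $\cR^{}_{M} := \{ G\in\cR(\XX) \mid \psi(G) = M\}$, which, if
  nonempty, is a coset of $\cS(\XX) = \ker\psi$. From the construction
  of $\zeta$ in the proof of Theorem~\ref{thm:equi-general}, the
  automorphism $\zeta(G)$ depends only on $M = \psi(G)$, so $\zeta$ is
  constant on $\cR^{}_{M}$. Suppose $G, G'\in\cR^{}_{M}$ satisfy
  $\kappa(G) = \kappa(G')$. Then $G^{}_{\AAA} = (G')^{}_{\AAA}$, so
  the symmetry $H := G^{-1} G' \in\cS(\XX)$ induces the identity on
  $\AAA$; in particular $H$ maps every fibre $\phi^{-1}(z)$ to
  itself. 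Choose $z^{}_{0}\in\AAA$ with
  $\card\bigl(\phi^{-1}(z^{}_{0})\bigr) = c$. Since the shift has a
  dense orbit and $H$ is continuous, $H$ is determined by its
  (permutation) action on the finite set $\phi^{-1}(z^{}_{0})$, by the
  argument of \cite[Thm.~3.3]{CQY}; hence there are at most $c$
  possibilities for $H$, and therefore at most $c$ elements of
  $\cR^{}_{M}$ with a given $\kappa$-value. Thus
  $\kappa \! : \, \cR^{}_{M} \xrightarrow{\quad} \AAA$ is at most
  $c$-to-one.
\end{proof}
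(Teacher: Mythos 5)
Your argument is correct and is exactly the (omitted) proof the paper intends: the statement carries a \verb|\qed| with no written proof, being the ``obvious generalisation'' of Corollary~\ref{coro:equicontinuous} and \cite[Thm.~3.3]{CQY}, and your transport-of-fibres argument for (1) and coset-plus-dense-orbit argument for (2) are precisely that generalisation, now with the affine action $z \mapsto \kappa(G) + \zeta(G)(z)$. The only phrase worth tightening is ``determined by its (permutation) action on the finite set $\phi^{-1}(z_{0})$'' in part (2), which as written only yields a bound of $c\ts!$; the bound $c$ comes from $H$ being determined by the image of a \emph{single} dense-orbit point in a minimal fibre, as you correctly note in your preliminary discussion.
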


Finally, we need a higher-dimensional analogue of
Proposition~\ref{prop:CHL}.

\begin{prop}\label{prop:higher-CHL}
  Let\/ $(\XX, \ZZ^{d})$ be a shift over a finite alphabet\/ $\cA$,
  with faithful shift action. Then, any\/ $G \in \cS (\XX)$ is
  realised as a $d$-dimensional sliding block map of finite radius,
  which is a local derivation rule in the sense of\/
  \cite[Def.~5.6]{TAO}.
   
  Moreover, any\/ $R\in\cR (\XX)$ is of the form\/ $R = H h^{}_{\nts M}$
  with\/ $M = \psi (R)$ and\/ $h^{}_{\nts M}$ as defined in
  Eq.~\eqref{eq:def-hm}, where\/
  $H \! : \, h^{}_{\nts M} (\XX) \xrightarrow{\quad} \XX$ is again a
  sliding block map of finite radius.
\end{prop}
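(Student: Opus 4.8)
The plan is to derive both assertions from the $d$-dimensional version of the Curtis--Hedlund--Lyndon (CHL) theorem, following closely the template of Proposition~\ref{prop:CHL}.

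For the first part, I would observe that a symmetry $G \in \cS (\XX)$ is, by definition, a homeomorphism of $\XX$ commuting with each shift generator $S^{}_{i}$, hence with the entire $\ZZ^{d}$-action generated by them. The $d$-dimensional CHL theorem --- which holds by the same compactness argument as the classical one, with windows now finite subsets of $\ZZ^{d}$ --- then produces a finite window $W \subset \ZZ^{d}$ and a local rule $h \! : \, \cA^{W} \xrightarrow{\quad} \cA$ with $(G(x))^{}_{\bs{n}} = h\bigl( (x^{}_{\bs{n} + \bs{w}})^{}_{\bs{w} \in W}\bigr)$, that is, a sliding block map of finite radius; this is exactly a local derivation rule in the sense of \cite[Def.~5.6]{TAO}.

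For the second part, I would fix $R \in \cR (\XX)$ and set $M = \psi (R) \in \GL (d,\ZZ)$, so that the conjugation action of $R$ sends $S^{\bs{n}}$ to $S^{M \bs{n}}$ for every $\bs{n} \in \ZZ^{d}$, in line with Eq.~\eqref{eq:psi-def} and the conventions of Lemma~\ref{lem:full}. The canonical map $h^{}_{\nts M}$ of Eq.~\eqref{eq:def-hm} obeys the same conjugation relation $h^{}_{\nts M}\ts S^{\bs{n}}\ts h^{-1}_{\nts M} = S^{M \bs{n}}$, but only on the \emph{full} shift $\XX_{\cA}$, since $h^{}_{\nts M}$ in general fails to preserve the subshift $\XX$. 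So, exactly as in the reversor part of Proposition~\ref{prop:CHL}, I would pass to the auxiliary space $\YY := h^{}_{\nts M} (\XX)$; because $h^{}_{\nts M}$ is a homeomorphism of $\XX_{\cA}$ conjugating its $\ZZ^{d}$-action to itself, $\YY$ is again a closed, shift-invariant subspace of $\XX_{\cA}$, with shift action denoted by $T$. Then I would set $H := R \ts h^{-1}_{\nts M} \! : \, \YY \xrightarrow{\quad} \XX$ and verify, using $h^{-1}_{\nts M}\ts S^{\bs{n}} = S^{M^{-1}\bs{n}}\ts h^{-1}_{\nts M}$ together with $R \ts S^{M^{-1}\bs{n}} = S^{\bs{n}}\ts R$, that $H \ts T^{\bs{n}} = S^{\bs{n}}\ts H$ for all $\bs{n} \in \ZZ^{d}$; so $H$ is a continuous intertwiner of the two $\ZZ^{d}$-actions, whence the $d$-dimensional CHL theorem, in its version for maps between shift spaces over $\cA$, shows that $H$ is a sliding block map of finite radius. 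This gives the claimed factorisation $R = H \ts h^{}_{\nts M}$.

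The step needing the most care is the bookkeeping around $h^{}_{\nts M}$: it is defined a priori only on $\XX_{\cA}$ and generically does not map $\XX$ into itself, which is what forces the detour through $\YY = h^{}_{\nts M}(\XX)$ and obliges one to check that $\YY$ is genuinely a shift space. One must also keep the orientation of the conjugation action consistent throughout (whether $\psi(R) = M$ corresponds to $S^{\bs{n}} \mapsto S^{M\bs{n}}$ or to $S^{M^{-1}\bs{n}}$, and whether a transpose intervenes), since a slip there would destroy the intertwining identity $H \ts T^{\bs{n}} = S^{\bs{n}}\ts H$. Apart from this, the argument is the faithful $d$-dimensional mirror of the one given for Proposition~\ref{prop:CHL}, and no further substantive point arises.
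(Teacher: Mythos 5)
Your proposal is correct and follows essentially the same route as the paper: the first part is the $d$-dimensional CHL theorem applied to a shift-commuting self-map, and the second part factors $R$ as $H\ts h^{}_{\nts M}$ with $H = R\ts h^{-1}_{\nts M}$ acting on $\YY = h^{}_{\nts M}(\XX)$, checks the intertwining relation via the commutative diagram, and applies CHL to $H$. The bookkeeping points you flag (that $h^{}_{\nts M}$ need not preserve $\XX$, and the orientation of the conjugation action $R\ts S^{\bs{n}} R^{-1} = S^{M\bs{n}}$) are exactly the ones the paper's proof also relies on.
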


\begin{proof}
  Since $\cA$ is finite, the claim for $G\in\cS (\XX)$ is nothing but
  the CHL theorem for $\ZZ^{d}$-shifts. Sliding block maps are a
  special case of local derivation rules, which can be considered as
  an extension to the setting of arbitrary patterns of finite local
  complexity in $\RR^{d}$.
  
  Next, consider an element $R\in \cR (\XX)$, define
  $H = R \ts h^{}_{\nts M^{-1}}$ and set $\YY = h^{}_{\nts M} (\XX)$.
  Then, for any $1\geqslant i\geqslant d$, one obtains the diagram
\[
\begin{CD}
   \XX @> h^{}_{\nts M} >> \YY @> H >> \XX \\
   @V  R^{-1} S^{}_{i} R VV   @V T^{}_{i} VV @ VV S^{}_{i} V \\
   \XX @> h^{}_{\nts M} >> \YY @> H >> \XX
\end{CD}
\]
where $T_{i}$ is the shift on $\YY$ defined as in
Eq.~\eqref{eq:def-d-shift}.  Since
$R^{-1} S^{}_{i} R = \prod_{j} S^{m^{-1}_{ji}}_{j}$ via our previously
studied conjugation action, with $\psi(h^{}_{\nts M^{-1}}) = M^{-1}$, one can
easily check that the entire diagram is indeed commutative. In
particular, $H$ intertwines the shift action and must then be a
sliding block map as claimed.
\end{proof}

The map $H$ in Proposition~\ref{prop:higher-CHL} is invertible
by construction, where both $H$ and $H^{-1}$ are represented by
local derivation rules. This means that $\XX$ and $h^{}_{\nts M} (\XX)$
are mutually locally derivable (MLD) from one another; see
\cite[Sec.~5.2]{TAO} for background. This has the following
consequence.

\begin{coro}\label{coro:MLD}
  Under the assumptions of Proposition~\emph{\ref{prop:higher-CHL}}, a
  matrix\/ $M\in\GL (d,\ZZ)$ is an element of\/ $\psi (\cR (\XX))$ if
  and only if\/ $\XX$ and\/ $h^{}_{\nts M} (\XX)$ are MLD.  \qed
\end{coro}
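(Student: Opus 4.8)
The plan is to prove Corollary~\ref{coro:MLD} as a direct consequence of Proposition~\ref{prop:higher-CHL} and the definition of mutual local derivability, with essentially nothing left to do beyond unwinding the two notions and checking each implication.

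\smallskip

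\emph{The forward direction.} Suppose $M\in\psi(\cR(\XX))$, so there exists $R\in\cR(\XX)$ with $\psi(R)=M$. By Proposition~\ref{prop:higher-CHL}, $R = H\ts h^{}_{\nts M}$ with $H\! : \, h^{}_{\nts M}(\XX) \xrightarrow{\quad} \XX$ a sliding block map of finite radius. I would then observe that $H = R\ts h^{-1}_{\nts M} = R\ts h^{}_{\nts M^{-1}}$ is a bijection (both $R$ and $h^{}_{\nts M}$ are homeomorphisms), and that its inverse $H^{-1} = h^{}_{\nts M}\ts R^{-1}$ maps $\XX$ to $h^{}_{\nts M}(\XX)$. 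Applying Proposition~\ref{prop:higher-CHL} to the extended symmetry $R^{-1}\in\cR(\XX)$, which has $\psi(R^{-1}) = M^{-1}$, gives that $R^{-1} = H'\ts h^{}_{\nts M^{-1}}$ with $H'$ a sliding block map of finite radius, so $H^{-1} = h^{}_{\nts M}\ts H'\ts h^{}_{\nts M^{-1}}$; since conjugation by the coordinate-change maps $h^{}_{\nts M^{\pm 1}}$ carries a local derivation rule to a local derivation rule (it merely relabels the positions of a pattern by the linear map $M$, preserving finite local complexity), $H^{-1}$ is again a local derivation rule. Hence both $H$ and $H^{-1}$ are given by local rules, which is precisely the statement that $\XX$ and $h^{}_{\nts M}(\XX)$ are MLD in the sense of \cite[Sec.~5.2]{TAO}.

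\smallskip

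\emph{The converse.} Suppose $\XX$ and $\YY := h^{}_{\nts M}(\XX)$ are MLD. Then there is a homeomorphism $H\! : \, \YY \xrightarrow{\quad} \XX$ such that both $H$ and $H^{-1}$ are local derivation rules, and one checks from the definition that $H$ intertwines the two shift actions: the shift $T^{}_{i}$ on $\YY$ (defined as in Eq.~\eqref{eq:def-d-shift}) corresponds, under $H$, to the action on $\XX$ obtained by conjugating $S^{}_{i}$, and because $h^{}_{\nts M}$ conjugates $S^{}_{i}$ on $\XX$ to $\prod_{j} S^{m^{-1}_{ji}}_{j}$ on $\YY$ we get, with $R := H\ts h^{}_{\nts M}$, that $R$ is a homeomorphism of $\XX$ whose conjugation action sends $S^{}_{i}$ to $\prod_{j} S^{m^{}_{ji}}_{j}$. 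Thus $R\in\mathrm{norm}^{}_{\Aut(\XX)}(\cG) = \cR(\XX)$ with $\psi(R) = M$, so $M\in\psi(\cR(\XX))$.

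\smallskip

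I do not expect any genuine obstacle here: the content is entirely in Proposition~\ref{prop:higher-CHL}, and the corollary is a reformulation. The one point deserving a line of care is the bookkeeping in the converse, namely that ``$H$ and $H^{-1}$ are local rules and $H$ conjugates $T^{}_{i}$ to the appropriate composition of the $S^{}_{j}$'' is exactly equivalent to ``$R=H\ts h^{}_{\nts M}$ lies in $\cR(\XX)$ and $\psi(R)=M$'' --- but this follows immediately once one notes that $h^{}_{\nts M}$ itself is trivially an MLD-equivalence realising the matrix $M$, so composing with it only shifts which matrix is realised and does not affect local derivability. Everything else is routine.
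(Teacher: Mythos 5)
Your argument is correct and matches the paper's own (implicit) proof: the paper disposes of the corollary with the remark preceding it, namely that the map $H$ of Proposition~\ref{prop:higher-CHL} is invertible with both $H$ and $H^{-1}$ given by local derivation rules, which is exactly your forward direction, while the converse is the same routine composition $R = H\ts h^{}_{\nts M}$ that you spell out. Your extra care in expressing $H^{-1}$ as $h^{}_{\nts M}\ts H'\ts h^{}_{\nts M^{-1}}$ via Proposition~\ref{prop:higher-CHL} applied to $R^{-1}$ is a reasonable way to justify the locality of $H^{-1}$ that the paper leaves unstated.
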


We are now set to consider two classic and paradigmatic examples.

\section{Planar shifts}\label{sec:plane}

Let us begin with an example from tiling theory, namely the chair
inflation (or substitution) tiling; see \cite[Sec.~4.9]{TAO} and
references therein for background. The chair inflation rule, which is
primitive and aperiodic, is illustrated in Figure~\ref{fig:chair}.
From the $D_{4}$-symmetric fixed point depicted there, one defines the
geometric hull as the orbit closure under the translation action of
$\ZZ^{2}$, where we assume that the short edges of the chair prototile
have length $1$.

\begin{figure}[t]
\begin{center}
\includegraphics[width=0.9\textwidth]{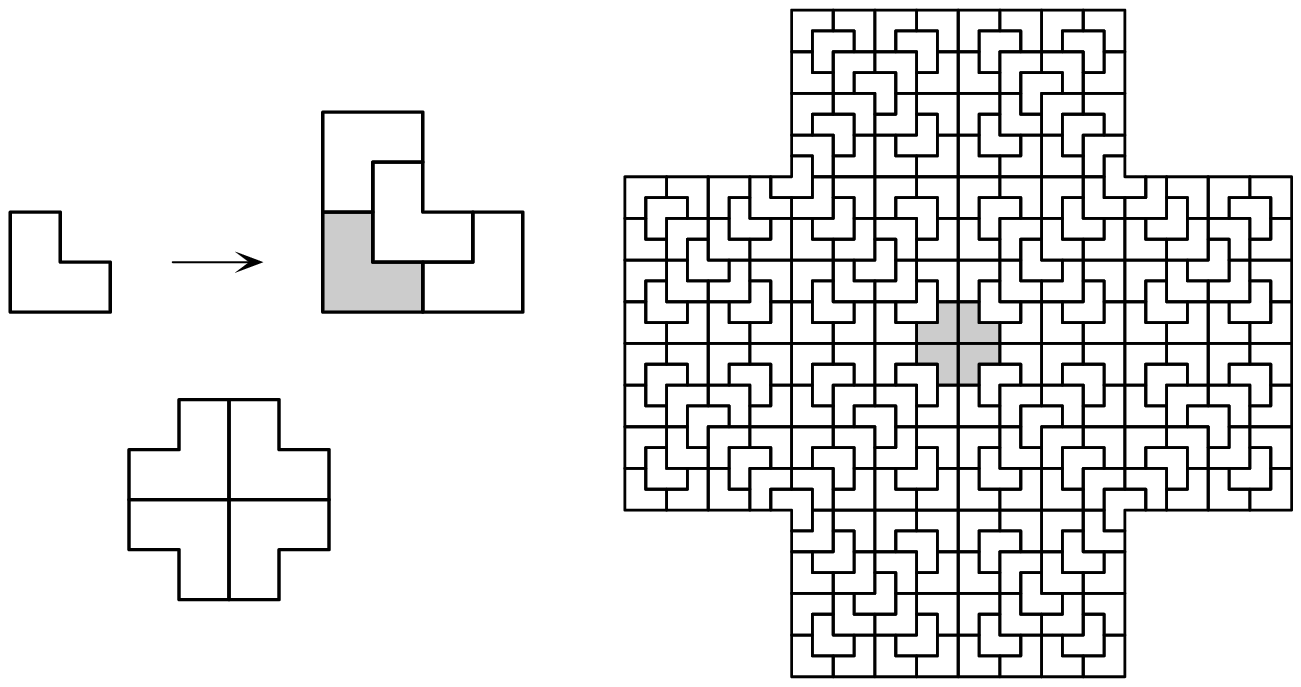}
\end{center}
\caption{The chair inflation rule (upper left panel; rotated tiles are
  inflated to rotated patches), a legal patch with full $D_{4}$
  symmetry (lower left) and a level-$3$ inflation patch generated from
  this legal seed (shaded; right panel). Note that this patch still
  has the full $D_{4}$ point symmetry (with respect to its centre), as
  will the infinite inflation tiling fixed point emerging from
  it.}\label{fig:chair}
\end{figure}

To turn this tiling picture into a symbolic coding such that we can
view it as a traditional $\ZZ^{2}$-shift, we employ the method
introduced in \cite{Robbie}; see also \cite{Olli,TAO}. The coding is
best summarised by an illustration, namely
\begin{equation}\label{eq:code}
    \raisebox{-25pt}{\includegraphics[width=0.6\textwidth]{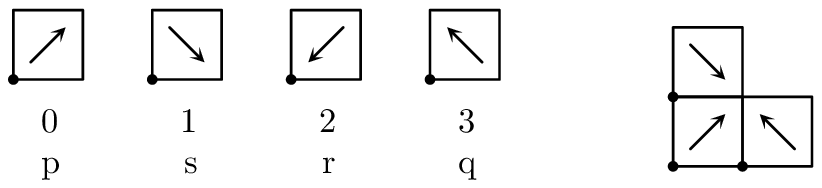}}
\end{equation}
where $p,q,r,s$ are the labels from \cite{Robbie,Olli}, and $0,1,2,3$
those from \cite{TAO}. Using the latter, the central legal patch from
Figure~\ref{fig:chair} is turned into that shown in
Figure~\ref{fig:code}.

By comparison, one clearly sees that the symbolic representation of
the geometric symmetries of the tiling requires the additional use of
permutations. Writing the dihedral group in the usual presentation as
\[
    D_{4} \, = \, \langle \, r,s \mid r^4 = s^2 = (rs)^2 = e \, \rangle 
\]
with $r$ a counterclockwise rotation through $\pi/2$ and $s$ the
reflection in the $y$-axis, its counterpart for the symbolic
description is generated by
\[
      \bigl( r , (0321) \bigr) \quad\text{and}\quad 
      \bigl( s , (03)(12)\bigr) . 
\]
All other pairings follow from here, or can be read off from
Figure~\ref{fig:code}.  In fact, if one wants to use a version of the
group that fixes the configuration of this figure, one needs to add an
application of $S_{1}$ from the left, both for $r$ and for $s$, to
make up for the shifted positions of the symbolic labels relative to
the tile boundaries of the tiling representation. This is reminiscent
of the 1D situation of odd core versus even core palindromes.

\begin{figure}[t]
\begin{center}
   \includegraphics[width=0.3\textwidth]{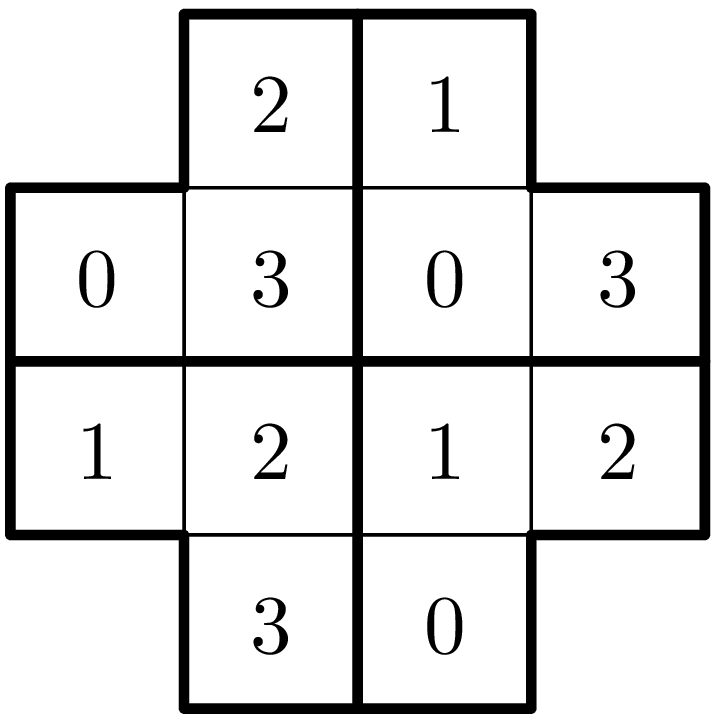}
\end{center}
\caption{The chair tiling seed of Figure~\ref{fig:chair}, now turned
  into a patch of its symbolic representation via the recoding of
  Eq.~\eqref{eq:code}. The relation between the purely geometric point
  symmetries in the tiling picture and the corresponding combinations
  of point symmetries and LEMs can be seen from this
  seed.}\label{fig:code}
\end{figure}

\begin{thm}\label{thm:chair}
  The\/ $\ZZ^{2}$-shift\/ $\ts \XX$ defined by the chair tiling is
  hypercubic and has pure point spectrum. Moreover, it has symmetry
  group $\cS (\XX) = \ts \ZZ^{2}$, which is minimal. The group of
  extended symmetries is\/ $\cR (\XX) = \ts \ZZ^{2} \rtimes \cH$
  with\/ $\cH \simeq W^{}_{2} \simeq D^{}_{4}$, where\/ $D^{}_{4}$
  is a maximal finite subgroup of\/ $\GL (2,\ZZ)$.
\end{thm}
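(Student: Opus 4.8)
The plan is to handle the four claims — hypercubicity, pure point spectrum, minimality of $\cS (\XX)$, and the structure $\cR (\XX) = \ZZ^{2}\rtimes\cH$ — by reducing the genuinely new content to the determination of the image $\psi (\cR (\XX)) \subseteq \GL (2,\ZZ)$. Pure point dynamical spectrum, together with the description of $\XX$ as a regular model set whose internal group is $\AAA = \ZZ^{}_{2}\times\ZZ^{}_{2}$, is classical, and I would simply quote \cite{Robbie} and \cite[Sec.~4.9]{TAO}; this identifies $\AAA$ as the MEF of $(\XX,\ZZ^{2})$ with factor map $\phi$, with $\kappa (\ZZ^{2}) = \ZZ^{2}$ sitting inside $\AAA$ as a free Abelian group of rank $2$ (so Theorem~\ref{thm:equi-general} applies) and, by regularity, with covering number $c = 1$ in Corollary~\ref{coro:equi-mult}. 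For $\cS (\XX) = \ZZ^{2}$ I would invoke Olli's analysis of the chair \cite{Olli}; note that Corollary~\ref{coro:equi-mult}(2) already shows $\kappa$ is injective on $\cS (\XX)$, so $\cS (\XX)$ embeds in $\AAA$, but pinning it down to $\ZZ^2$ is cleanest by citation.

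Next I would exhibit $\cH$. The chair inflation fixed point $x^{}_{0}$ of Figure~\ref{fig:chair} has full $D^{}_{4}$ point symmetry about a centre that, in the symbolic coding of Eq.~\eqref{eq:code}, is a $D^{}_{4}$-fixed point of the symbolic $\ZZ^{2}$. Taking that centre as the origin (equivalently, adjusting the Figure~\ref{fig:code} representatives by the power of $S^{}_{1}$ indicated there), each element of the geometric $D^{}_{4}$ is encoded as a map $F^{}_{\alpha}\, h^{}_{\nts M}$ with $M\in\GL (2,\ZZ)$ the corresponding integer point symmetry and $\alpha\in\varSigma^{}_{\!\cA}$ the accompanying letter exchange, the two generators being $\bigl(r,(0321)\bigr)$ and $\bigl(s,(03)(12)\bigr)$. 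Since $F^{}_{\alpha}$ commutes with all shifts and $h^{}_{\nts M}\, S^{\ts\bs{n}} = S^{\ts M\bs{n}}\, h^{}_{\nts M}$, each such map normalises $\cG = \ZZ^{2}$; since it fixes $x^{}_{0}$, a standard orbit-closure argument shows it maps $\XX$ onto $\XX$; and a short telescoping computation with that same identity shows the relations $r^{4} = s^{2} = (rs)^{2} = \id$ persist. Hence these maps generate a subgroup $\cH\leqslant\cR (\XX)$ with $\cH\simeq D^{}_{4}$ on which $\psi$ is injective, and $\psi (\cH)$ is the copy of $W^{}_{2}\simeq D^{}_{4}$ in $\GL (2,\ZZ)$ generated by the quarter turn and a coordinate reflection, which is precisely the group of $2\times 2$ signed permutation matrices. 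In particular $\psi (\cR (\XX))\supseteq W^{}_{2}$.

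The heart of the proof is the converse $\psi (\cR (\XX))\subseteq W^{}_{2}$, i.e.\ hypercubicity. Fix $R\in\cR (\XX)$ and put $M = \psi (R)$; since $\det M = \pm 1$, $M$ induces $\zeta^{}_{M}\in\Aut (\AAA)$, and Corollary~\ref{coro:equi-mult}(1) with $c = 1$ gives, for every $d\geqslant 1$,
\[
   W^{}_{d} \, = \, \zeta^{}_{M}\bigl(W^{}_{d}\bigr) + \kappa (R) \ts , \qquad
   W^{}_{d} := \{\, z\in\AAA \mid \card\phi^{-1}(z) = d \,\} \ts .
\]
The plan is to show that the singular set $W := \bigcup_{d>1} W^{}_{d} = \AAA\setminus W^{}_{1}$ is a nonempty union of affine $2$-adic lines, each parallel to one of the two coordinate axes, with lines in both coordinate directions occurring; concretely, for the chair one finds $W = (\ZZ\times\ZZ^{}_{2})\cup(\ZZ^{}_{2}\times\ZZ)$, which reflects the fact that in the chair hierarchy the origin fails to have a unique completion to a tiling exactly when its $2$-adic address degenerates (becomes integral) in one of the two coordinates. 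Granting this, $\zeta^{}_{M}$ carries a line in direction $\bs{v}$ to a line in direction $M\bs{v}$, while translation by $\kappa (R)$ leaves directions unchanged; so the displayed identity forces $M$ to permute the direction set $\{\bs{e}^{}_{1},\bs{e}^{}_{2}\}$ up to sign, and since $M\in\GL (2,\ZZ)$ with $\bs{e}^{}_{1},\bs{e}^{}_{2}$ primitive, $M$ is a signed permutation matrix, i.e.\ $M\in W^{}_{2}$.

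Combining the two inclusions gives $\psi (\cR (\XX)) = W^{}_{2}\simeq D^{}_{4}$, a maximal finite subgroup of $\GL (2,\ZZ)$ by the crystallographic classification \cite{Schw}. With $\cH$ as constructed we have $\cH\simeq\psi (\cH) = \psi (\cR (\XX))$, so Proposition~\ref{prop:remains} applies and yields the exact sequence $1\to\cS (\XX)\to\cR (\XX)\to D^{}_{4}\to 1$ together with $\cR (\XX) = \cS (\XX)\rtimes\cH = \ZZ^{2}\rtimes\cH$, as claimed. The main obstacle is the structural claim about $W$ in the previous paragraph: one must extract, from the chair inflation (or directly from the cut-and-project window and the combinatorics of its boundary), that $\phi$ fails to be injective only along the two coordinate directions and never along any other $2$-adic direction. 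I would establish this by following the level-$k$ supertile decomposition and identifying precisely which $2$-adic addresses admit a seam; the $D^{}_{4}$-invariance already in hand guarantees that if one such direction occurs then both coordinate directions do, so the only real work is to rule out non-axis directions.
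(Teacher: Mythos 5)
Your overall architecture matches the paper's: quote Robinson \cite{Robbie} and \cite{Olli} for the MEF $\AAA=\ZZ_{2}\times\ZZ_{2}$ with $c=1$ and for $\cS(\XX)=\ZZ^{2}$, get the lower bound $D_{4}\subseteq\psi(\cR(\XX))$ from the symmetric fixed point, and then exclude every further matrix by exhibiting a set of directions that any extended symmetry must preserve. Your way of running that last step --- the affine invariance of the singular fibre sets from Corollary~\ref{coro:equi-mult} forcing $M=\psi(R)$ to permute the directions of the singular lines --- is a legitimate variant of what the paper does, and it even avoids the paper's case split: the paper instead invokes Fact~\ref{fact:torsion} to reduce to a single parabolic or hyperbolic $M$ and then shows, via the $2$-fibres and the MLD criterion of Corollary~\ref{coro:MLD}, that $h^{}_{\nts M}(\XX)$ and $\XX$ cannot be MLD. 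Two technical points you would still need: since the singular set is \emph{dense} in $\AAA$, ``direction of a line'' must be defined pointwise (say, as the set of $\bs{v}$ with $z+\ZZ_{2}\ts\bs{v}\subseteq W$ for a given singular $z$), and you need that $\zeta(R)$ acts on $\AAA$ as the $2$-adic extension of $M$, which follows from the construction in Theorem~\ref{thm:equi-general} because $\kappa(S_{i})=\bs{e}_{i}$.

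The genuine gap is exactly the step you flag as ``the main obstacle'', and your guessed answer for it is wrong. For the chair, the $2$-fibres consist of pairs of tilings that agree off a line parallel to the \emph{main diagonal or the anti-diagonal}, not off a coordinate axis; correspondingly the singular set is $W=\{(a,b)\in\ZZ_{2}\times\ZZ_{2}\mid a-b\in\ZZ \text{ or } a+b\in\ZZ\}$, i.e.\ the $\ZZ^{2}$-translates of the diagonal and anti-diagonal subgroups (with the $5$-fibres sitting over $\ZZ^{2}$ itself), and not $(\ZZ\times\ZZ_{2})\cup(\ZZ_{2}\times\ZZ)$. This is the content of Robinson's fibre analysis, which the paper quotes rather than rederives; the staircase structure of the chair hierarchy makes the seams diagonal. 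You are saved by an accident of the square: the stabiliser in $\GL(2,\ZZ)$ of the direction pair $\{\pm(1,1),\pm(1,-1)\}$ is again $W_{2}$, since the symmetries of the square permute its diagonals as well as its edges, so the conclusion $M\in W_{2}$ survives the correction. But as written your argument both asserts a false description of $W$ and defers its justification, so the decisive step is not established; filling it honestly means either reproducing Robinson's supertile/seam analysis or citing it, as the paper does.
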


Before we prove this result in the spirit of our earlier arguments
based on the MEF of the system and on its description as a regular
model set, let us use the geometric setting to explain why one should
expect the above claims on the symmetries.  In tiling dynamics, one
usually works with the translation action of $\RR^d$, meaning $\RR^2$
instead of $\ZZ^2$ in the case at hand.  Here, the connection is
established by a suspension with a constant roof function \cite{EW}
on a unit square, which makes the relation rather simple.

Now, observe that any element of $\cR (\XX)$ (in this new setting)
must respect the tiling nature, so it must be a homeomorphism of the
plane that maps tiles onto tiles and, in particular, cannot deform any
of them. This is sufficiently rigid to rule out anything beyond
Euclidean motions. Next, since the geometric symmetries of the fixed
point tiling of Figure~\ref{fig:chair} agree with that of a perfect
square, which is a maximal finite subgroup of $\GL (2,\ZZ)$, we may
conclude that the shift is hypercubic (with $d=2$). 

Let us return to the case of the $\ZZ^2$-action for the proof.  The
claim on the symmetry group is \cite[Thm.~3.1]{Olli}, which is
based on previous work by Robinson \cite{Robbie}, and applies to
both representations of the chair tiling system due to the invariance
of $\cS (\XX)$ under topological conjugacy. We will briefly recall the
argument (in our notation) for convenience.

\begin{proof}[Proof of Theorem~$\ref{thm:chair}$]
  Robinson shows \cite{Robbie} that $\ts \XX$ is an almost everywhere
  one-to-one extension of its MEF, canonically chosen as
  $\AAA = \ZZ_{2}\! \times \! \ZZ_{2}$, so we have $c=1$ for the
  corresponding factor map $\phi$. He also shows that non-singleton
  fibres over $\AAA$ have cardinality either $2$ or $5$, which is not
  difficult to derive from the inflation rule.\footnote{The chair
    tiling can also be viewed as a regular model set with internal
    space $\ZZ_{2} \! \times \! \ZZ_{2}$; compare \cite[Secs.~4.9 and
    7.2]{TAO}. This allows an independent determination of the fibres
    and their cardinality over the MEF. As such, it can be considered
    as a two-dimensional analogue of the period doubling system from
    Example~\ref{ex:pd}.}

  To show that $\ts \XX$ has a trivial symmetry group, we will use the
  $5$-fibres, which are the preimages of $\ZZ^{2} \subset \AAA$ under
  $\phi$.  Here, we may profit from the use of the torus
  parametrisation, with the `torus'
  $\TT = \AAA / \ZZ^2 \simeq (\ZZ_{2}/\ZZ)^2$, which is a compact
  Abelian group. The advantage of it is that we deal with entire shift
  orbits in $\XX$ at once rather than with single elements. Here, any
  $\ZZ^{2}$-orbit of an element of $\XX$ is represented by a point of
  $\TT$, which defines a continuous, a.e.\ one-to-one mapping
  $\varPhi \! : \, \mathrm{orb} (\XX) \xrightarrow{\quad} \TT$, with
  $\varPhi = \chi \circ \phi$, where $\mathrm{orb} (\XX)$ is the space
  of $\ZZ^{2}$-orbits in $\XX$ and
  $\chi \! : \, \AAA \xrightarrow{\quad}\TT$ is the natural
  homomorphism to the factor group $\TT$.  In $\XX$, we have one orbit
  of fibres with cardinality $5$, whose representative on $\TT$ is
  $0$. Invoking Corollary~\ref{coro:equi-mult} for $d=2$, we
  see that $\kappa (\cS (\XX))$ must be in the kernel of $\chi$, which
  means $\kappa (\cS (\XX)) = \ZZ^{2}$.  Since $\kappa$ is injective
  due to $c=1$, the symmetry claim follows.

  Next, $\XX$ is the orbit closure of a configuration (or tiling) with
  full $D_{4}$ symmetry, where the latter is the group discussed
  above, which is generated by the fourfold rotation
  $\left( \begin{smallmatrix} 0 & -1 \\ 1 & 0\end{smallmatrix}\right)$
  and the reflection
  $\left( \begin{smallmatrix} 0 & 1 \\ 1 &
      0 \end{smallmatrix}\right)$.
  This implies $D_{4}$ to be a subgroup of
  $\varPsi := \psi (\cR (\XX)) \subseteq \GL (2,\ZZ)$, and a simple
  calculation confirms that $\cR (\XX)$ indeed contains a subgroup of
  the form $\ZZ^{2} \rtimes \cH$ with $\cH \simeq D_{4}$.  In the
  tiling representation, we can directly work with the linear maps
  defined by the corresponding matrices, while the symbolic
  representation needs the version with additional permutations on the
  alphabet as explained earlier.

  It thus remains to show that $\varPsi$ contains no further element.
  Let us assume the contrary, meaning $D_{4} \subsetneq \varPsi$.
  Since $D_{4}$ is a maximal finite subgroup of $\GL (2,\ZZ)$, we may
  conclude that $\varPsi$ would then be an infinite subgroup, so must
  contain at least one element of infinite order by
  Fact~\ref{fact:torsion}.  The latter cannot be elliptic (which means
  diagonalisable with eigenvalues on the unit circle), so must be
  parabolic or hyperbolic.  Every parabolic element, up to conjugacy
  in $\GL (2,\ZZ)$, is of the form
  $\left( \begin{smallmatrix} 1 & n \\ 0 & 1 \end{smallmatrix}
  \right)$
  for some $0 \ne n \in \ZZ$, while any hyperbolic element has
  irrational eigenvalues, with one expanding and one contracting
  eigenspace, neither of which is a lattice direction. When
  $M\in \GL (2,\ZZ)$ is parabolic or hyperbolic, we have
  $h^{}_{\nts M} (\XX) \ne \XX$, because $h^{}_{\nts M}$ cannot
  preserve (as a set) any pair of orthogonal directions (such as
  $(1,0)$ and $(0,1)$ or the diagonal and the anti-diagonal), and
  hence results in a deformation of the tiling (or a `reshuffling' of
  the symbolic shift) that defines a different hull. In fact, due to
  minimality, one has the stronger property that
  $h^{}_{\nts M} (\XX) \cap \XX = \varnothing $.

  To continue, we employ the $2$-fibres. Any extended symmetry must
  map the set of $2$-fibres onto itself. Each $2$-fibre consists of a
  pair of tilings that agree everywhere except along one line, which is
  either parallel to the main diagonal or to the anti-diagonal. The
  only difference between the two elements in a $2$-fibre is the
  direction of the `stacked' chairs \cite{Robbie}. Now, each
  $\ZZ^{2}$-orbit of $2$-fibres (of which there are uncountably many)
  is parametrised by a point on $\TT$ of the form $(x,x)$ or
  $(x,-x)$. They define two `lines' (or `directions') in $\TT$. This
  parametrising set is $D_{4}$-invariant, as it must be, and $D_{4}$
  is the stabiliser of this set in $\GL (2, \ZZ)$, which we will employ
  shortly. Moreover, this structure remains unchanged under an 
  arbitrary MLD rule, which means
  that both $\TT$ and the directions on $\TT$ stay the same.

  In contrast, our transformations $h^{}_{\nts M}$ will change these
  directional lines, both in the plane (the embedding space of our
  system) and on $\TT$ (the parameter space, where we have an induced
  action of $h^{}_{\nts M}$). If $M$ is parabolic, it can preserve one
  direction (but then not the other, due to the action as a shear) or
  it can map one direction to the other (but then the latter to a new
  one). This second situation can also occur for $M$ hyperbolic. In
  any case, we know that at least one of our two directions is mapped
  to a \emph{new} one.  If we write our hypothetical extended symmetry
  as $H \circ h^{}_{\nts M}$, with $H$ a local derivation rule, we
  thus know that there is a $2$-fibre with two elements that agree
  anywhere off a line $\cL$ and that get mapped by $H$ to two tilings
  which agree off a line $\cL'$ with a direction that is neither
  parallel to the diagonal nor to the anti-diagonal. But this is
  impossible, and $\XX$ and $h^{}_{\nts M} (\XX)$ cannot be MLD. Now,
  Corollary~\ref{coro:MLD} implies $M \notin \varPsi$, and we are done
  via Proposition~\ref{prop:remains}.
\end{proof}

The majority of our previous examples were minimal shifts. It follows
from Example~\ref{ex:b-free} and from \cite{BK,KS} that dynamical
minimality is not necessary for a minimal symmetry group. In fact,
also $\ZZ^{d}$-shifts of algebraic origin quite often display the
necessary rigidity to exclude extra symmetries, irrespective of
complexity or entropy; compare \cite{BS,B-Ward}.

Let $\XX^{}_{\mathrm{L}} \subset \{ 0,1\}^{\ZZ^{2}}$ denote
Ledrappier's shift \cite{Led}, which is defined by the condition that
\begin{equation}\label{eq:def-L}
        x^{}_{(m,n)} + x^{}_{(m+1,n)} + x^{}_{(m,n+1)} \, \equiv \,
        0 \; \bmod 2
\end{equation}
holds for all $x\in\XX^{}_{\mathrm{L}}$ and all $(m,n)\in\ZZ^{2}$; see
Figure~\ref{fig:L} for a geometric illustration of this
condition.  To expand on this, we call a lattice triangle in $\ZZ^{2}$
with vertices $\{ (m,n), (m',n'), (m'',n'')\}$ an $L$-\emph{triangle} if
\[
     x^{}_{(m,n)} + x^{}_{(m',n')} + 
     x^{}_{(m'',n'')} \, \equiv \, 0 \; \bmod 2
\]
holds for all $x \in \XX^{}_{\mathrm{L}}$. By Eq.~\eqref{eq:def-L},
every triangle of the form $\{ (m,n), (m+1,n), (m,n+1)\}$, which we
call \emph{elementary} from now on, is an $L$-triangle.  However,
there are more $L$-triangles. We state the well-known situation as
follows; see \cite[Lemma~5.6]{ABB}.

\begin{fact}\label{fact:L-tri}
  A lattice triangle in\/ $\ZZ^{2}$ is an\/ $L$-triangle if and only
  if it is of the form
\[
       \big\{ (m,n), (m+2^{i},n), (m,n+2^{i}) \big\}
\]    
for some\/ $m,n \in \ZZ$ and\/ $i \in \NN_{0}$. In particular, the
only\/ $L$-triangles with area\/ $\frac{1}{2}$ are the elementary\/
$L$-triangles.  \qed
\end{fact}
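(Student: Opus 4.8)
The plan is to prove this in two directions, using the well-known fact that the indicator sequences of Ledrappier's shift are exactly those $x\in\{0,1\}^{\ZZ^2}$ arising as the reduction mod $2$ of the Pascal-type array generated by a "boundary" condition along a line, so that, working in the field $\mathbb F_2$, the evolution of Eq.~\eqref{eq:def-L} can be written as $x^{}_{(m,n+1)} = x^{}_{(m,n)} + x^{}_{(m+1,n)}$. Iterating this recursion $k$ times in the $n$-direction gives $x^{}_{(m,n+k)} = \sum_{j=0}^{k} \binom{k}{j} x^{}_{(m+j,n)}$ in $\mathbb F_2$, where the binomial coefficients are taken mod $2$. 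By Kummer's theorem (or Lucas' theorem), $\binom{k}{j}$ is odd for all $0\leqslant j\leqslant k$ precisely when $k = 2^{i}-1$ for some $i\in\NN_0$, and more relevantly $\binom{2^{i}}{j}\equiv 0\bmod 2$ for all $0<j<2^{i}$. Hence, taking $k=2^{i}$, we obtain $x^{}_{(m,n+2^{i})} = x^{}_{(m,n)} + x^{}_{(m+2^{i},n)}$, which says exactly that $\{(m,n),(m+2^{i},n),(m,n+2^{i})\}$ is an $L$-triangle; by symmetry of Eq.~\eqref{eq:def-L} under swapping the two coordinate directions (and by the fact that the relation is translation invariant), this is the "if" direction.

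For the converse, I would argue that \emph{no other} lattice triangle can be an $L$-triangle. First one reduces to triangles of a normalised shape: any lattice triangle can be translated so one vertex is at the origin, and one must show that a linear relation $x^{}_{\bs 0} + x^{}_{\bs u} + x^{}_{\bs v} \equiv 0$ valid for all $x\in\XX^{}_{\mathrm L}$ forces $\{\bs 0,\bs u,\bs v\}$ to be of the stated form. The clean way to see this is to use the algebraic/harmonic-analytic description of $\XX^{}_{\mathrm L}$: the annihilator of $\XX^{}_{\mathrm L}$ in the Pontryagin dual $\widehat{\mathbb F_2^{\ZZ^2}} \cong \mathbb F_2[u^{\pm 1},v^{\pm 1}]$ is the principal ideal generated by $1+u+v$. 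A relation $x^{}_{\bs 0}+x^{}_{\bs u}+x^{}_{\bs v}\equiv 0$ holding identically on $\XX^{}_{\mathrm L}$ is equivalent to the polynomial $1 + u^{a_1}v^{b_1} + u^{a_2}v^{b_2}$ (with $\bs u=(a_1,b_1)$, $\bs v=(a_2,b_2)$) lying in that ideal, i.e.\ being divisible by $1+u+v$ in the Laurent polynomial ring over $\mathbb F_2$. A direct divisibility analysis — clearing denominators so that one works in $\mathbb F_2[u,v]$ and comparing with the classical fact that $1+u^{k}+v^{k}$ is divisible by $1+u+v$ over $\mathbb F_2$ iff $k$ is a power of $2$ — then pins down $\{\bs 0,\bs u,\bs v\}$ to be an affine image by an elementary transformation of the standard triangle, but since the relation must be invariant under the $\GL$-type symmetries compatible with $\XX^{}_{\mathrm L}$ only the claimed family survives. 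The final sentence, that the only $L$-triangles of area $\tfrac12$ are the elementary ones, is then immediate: the triangle $\{(m,n),(m+2^{i},n),(m,n+2^{i})\}$ has area $2^{2i-1}$, which equals $\tfrac12$ only when $i=0$.

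The main obstacle is the converse direction: showing that the three-term relations valid on all of $\XX^{}_{\mathrm L}$ are \emph{exactly} the consequences of the defining one, rather than merely containing it. Concretely, one must rule out "accidental" $L$-triangles of other shapes, and for this the polynomial-divisibility computation over $\mathbb F_2$ (equivalently, a careful bookkeeping of carries via Kummer's theorem for non-aligned triangles) is the technical heart. A cleaner alternative, if one does not want to invoke the full algebraic description, is to reduce everything to the two directions in which Eq.~\eqref{eq:def-L} is "linear" and iterate the recursion to express $x$ on an arbitrary lattice point as an explicit $\mathbb F_2$-linear combination of values on a reference row, then observe that a purported new $L$-triangle would impose a nontrivial $\mathbb F_2$-linear dependence among the "free" coordinates on that row, contradicting the fact that those coordinates can be chosen arbitrarily (which is precisely the statement that $\XX^{}_{\mathrm L}$ has a row of free parameters). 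I would present the proof along the latter, more self-contained line, citing \cite[Lemma~5.6]{ABB} for the details of the carry computation.
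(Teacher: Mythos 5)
The paper itself offers no proof of this Fact: it is stated as known, with the entire content delegated to \cite[Lemma~5.6]{ABB}, so the comparison is really with that reference. Your ``if'' direction is complete and correct: iterating $x^{}_{(m,n+1)}=x^{}_{(m,n)}+x^{}_{(m+1,n)}$ over $\mathbb{F}_{2}$ gives $x^{}_{(m,n+k)}=\sum_{j}\binom{k}{j}x^{}_{(m+j,n)}$, and Lucas' theorem kills the middle terms when $k=2^{i}$ (equivalently, $(1+u+v)^{2^{i}}=1+u^{2^{i}}+v^{2^{i}}$ over $\mathbb{F}_{2}$). Your framework for the converse is also the right one: since $\XX^{}_{\mathrm{L}}=\mathrm{ker}(1+S^{}_{1}+S^{}_{2})$, Pontryagin duality identifies the relations valid on all of $\XX^{}_{\mathrm{L}}$ with the ideal $(1+u+v)$ in $\mathbb{F}_{2}[u^{\pm1},v^{\pm1}]$, so the question is exactly which trinomials $1+u^{a}v^{b}+u^{c}v^{d}$ are divisible by $1+u+v$.

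The gap is that this divisibility classification --- the entire content of the converse --- is only gestured at, and the sentence you offer in its place (``since the relation must be invariant under the $\GL$-type symmetries \dots only the claimed family survives'') is not an argument: invariance of the claimed family under symmetries cannot exclude the existence of \emph{other} divisible trinomials, and indeed the point-reflected triangles $\{(0,0),(-2^{i},0),(0,-2^{i})\}$ are $\GL$-images of the claimed ones but are \emph{not} $L$-triangles. The computation can be closed as follows. Substitute $v=1+u$ and clear denominators, so divisibility becomes an identity $u^{p_1}(1+u)^{q_1}+u^{p_2}(1+u)^{q_2}+u^{p_3}(1+u)^{q_3}=0$ in $\mathbb{F}_{2}[u]$, where the distinct pairs $(p_i,q_i)$ are a common translate of the three vertices. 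Comparing lowest-order coefficients forces exactly two of the $p_i$ to coincide and to be the strict minimum, say $p_1=p_2<p_3$; factoring out $u^{p_1}(1+u)^{q}$ with $q=\min(q_1,q_2)$ and $e=\lvert q_1-q_2\rvert>0$ leaves $(1+u)^{e}+1=u^{k}(1+u)^{l}$, evaluation at $u=1$ forces $l=0$, hence $(1+u)^{e}=1+u^{e}$, which holds over $\mathbb{F}_{2}$ iff $e=2^{i}$. This pins the exponent set to $\{(P,Q),(P+2^{i},Q),(P,Q+2^{i})\}$, i.e.\ exactly the claimed family and no other. With this step supplied (or with the citation to \cite[Lemma~5.6]{ABB} that you also offer, which is what the paper itself does), your proof is correct; the closing area computation $\tfrac{1}{2}(2^{i})^{2}=\tfrac12\Leftrightarrow i=0$ is fine.
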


%

To rephrase Eq.~\eqref{eq:def-L}, one can say that
$\XX^{}_{\mathrm{L}}$ is the largest subset of the full shift that is
annihilated by the mapping defined by $N:=1 + S^{}_{1} + S^{}_{2}$,
where $x+y$ means pointwise addition modulo $2$. Under this addition
rule, in contrast to most of our previous examples,
$\XX^{}_{\mathrm{L}}$ becomes a compact, zero-dimensional (or
discrete) Abelian group; see \cite{K-book} for background. Here, $N$
is an endomorphism of $\XX^{}_{\mathrm{L}}$, and we have
\[
    \XX^{}_{\mathrm{L}} \, = \, \mathrm{ker} (N) \ts .
\] 
Moreover, $\XX^{}_{\mathrm{L}}$ is not minimal, and has rank-$1$
entropy (but vanishing topological entropy).  Note that
$\XX^{}_{\mathrm{L}}$ is not aperiodic in our above (topological)
sense, though it is still measure-theoretically aperiodic
\cite[Def.~11.1]{TAO}, meaning that all periodic or partially periodic
elements together are still a null set for the Haar measure of
$\XX^{}_{\mathrm{L}}$. In particular, the $\ZZ^{2}$-action on
$\XX^{}_{\mathrm{L}}$ is faithful and mixing \cite{Led}; this can also
be derived from \cite[Thm.~6.5]{K-book}.  The shift action is
\emph{irreducible} in the sense that any closed and shift invariant
true subgroup of $\XX^{}_{\mathrm{L}}$ is finite; compare \cite[Sec.~4
and Ex.~4.3]{KS}. Finally, let us also mention that the spectrum is of
mixed type, with pure point and absolutely continuous components; see
\cite{BW} and references therein.

\begin{figure}[t]
\begin{center}
   \includegraphics[width=0.4\textwidth]{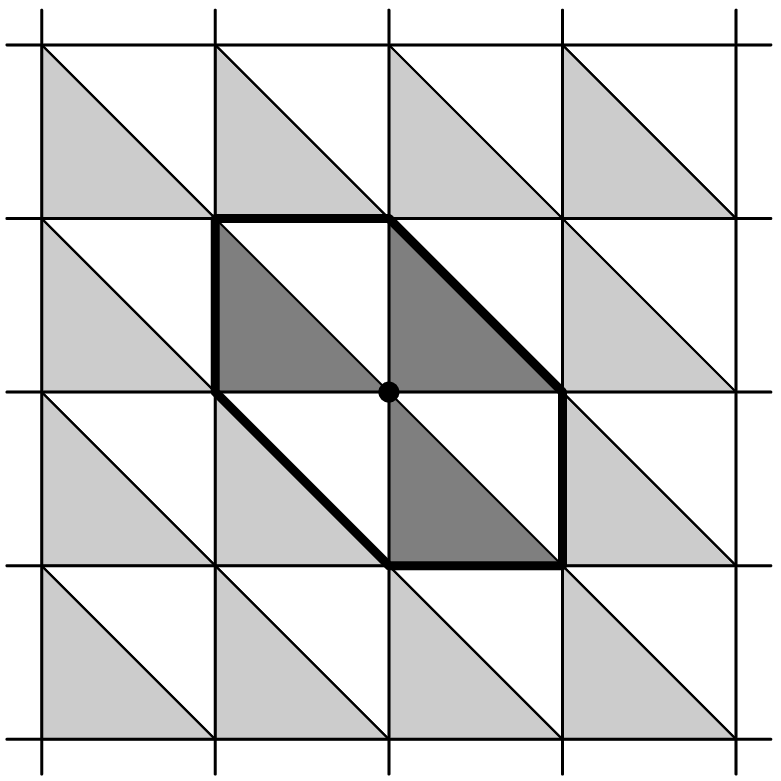}
\end{center}
\caption{Illustration of the central configurational patch for
  Ledrappier's shift condition, which explains the relevance of the
  triangular lattice. Eq.~\eqref{eq:def-L} must be satisfied
  for the three vertices of all elementary $L$-triangles (shaded). 
  The overall pattern of these triangles is preserved by all 
  (extended) symmetries. The group $D^{}_{3}$ from
  Theorem~\ref{thm:L} can now be viewed as the colour-preserving
  symmetry group of the `distorted' hexagon as indicated around the
  origin.  }\label{fig:L}
\end{figure}

\begin{thm}\label{thm:L}
  Ledrappier's shift\/ $\XX^{}_{\mathrm{L}}$ has symmetry group\/ $\cS
  (\XX^{}_{\mathrm{L}}) = \ts \ZZ^{2}$. The group of extended
  symmetries is\/ $\cR (\XX^{}_{\mathrm{L}}) = \ts \ZZ^{2} \rtimes
  D^{}_{3} $, where\/ $D^{}_{3} \simeq C^{}_{3} \rtimes C^{}_{2}$ is
  the dihedral group with\/ $6$ elements.  
\end{thm}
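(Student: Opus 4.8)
\emph{Strategy.} The theorem has two parts: minimality of the symmetry group, $\cS(\XX^{}_{\mathrm{L}}) = \ZZ^2$, and the determination $\psi(\cR(\XX^{}_{\mathrm{L}})) = D_3$ inside $\GL(2,\ZZ)$; the displayed semi-direct product then follows from Proposition~\ref{prop:remains}, once a subgroup $\cH \leqslant \cR(\XX^{}_{\mathrm{L}})$ with $\cH \simeq \psi(\cH) = \psi(\cR(\XX^{}_{\mathrm{L}}))$ has been exhibited. The first part is an instance of the algebraic rigidity of irreducible, mixing principal $\ZZ^d$-actions discussed above; see \cite{BS,B-Ward} and \cite{K-book}. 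Concretely, $\XX^{}_{\mathrm{L}}$ is Pontryagin dual to the cyclic module $P/(1+u+v)$ over $P = \mathbb{F}_2[u^{\pm 1},v^{\pm 1}]$, its shift-commuting continuous group automorphisms are the multiplications by units of $P/(1+u+v) \cong \mathbb{F}_2[u^{\pm 1},(1+u)^{-1}]$, and this unit group is $\langle u,\, 1+u\rangle \simeq \ZZ^2$, already realised by $S_1$ and $S_2$; rigidity makes every symmetry affine, hence a shift. (A more self-contained route, closer to the one-dimensional arguments, combines Proposition~\ref{prop:higher-CHL} with the facts that $\ker(1+S_1+S_2)$ has no non-trivial finite-support element and that the $L$-triangle pattern of Fact~\ref{fact:L-tri} is extremely rigid.)

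\emph{The subgroup $D_3$.} The Ledrappier relation is encoded by the Laurent polynomial $f = 1 + u + v$ over $\mathbb{F}_2$; its three monomials $1,u,v$ have exponent vectors $\bs 0,\bs e_1,\bs e_2$, forming an ``up-pointing'' unit triangle $\Delta$. The set of $M \in \GL(2,\ZZ)$ with $M\Delta$ a translate of $\Delta$ is a group isomorphic to $D_3 \simeq S_3$, generated by the transposition $\tau = \left(\begin{smallmatrix}0&1\\1&0\end{smallmatrix}\right)$ and by $\sigma = \left(\begin{smallmatrix}-1&-1\\1&0\end{smallmatrix}\right)$, which has order $3$. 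For each such $M$ one has $h^{}_{\nts M}(1+S_1+S_2)h^{-1}_{\nts M} = 1 + S^{M\bs e_1} + S^{M\bs e_2} = S^{\bs t}(1+S_1+S_2)$ for a suitable $\bs t \in \ZZ^2$, so $h^{}_{\nts M}(\XX^{}_{\mathrm{L}}) = \ker(1+S_1+S_2) = \XX^{}_{\mathrm{L}}$, and thus $h^{}_{\nts M} \in \cR(\XX^{}_{\mathrm{L}})$ with $\psi(h^{}_{\nts M}) = M$. Setting $\cH = \langle h^{}_{\nts\sigma}, h^{}_{\nts\tau}\rangle$ gives $\cH \simeq \psi(\cH) = D_3$, so it remains only to show that $\psi(\cR(\XX^{}_{\mathrm{L}})) = D_3$; Proposition~\ref{prop:remains} then yields $\cR(\XX^{}_{\mathrm{L}}) = \cS(\XX^{}_{\mathrm{L}}) \rtimes \cH = \ZZ^2 \rtimes D_3$.

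\emph{No further conjugation actions.} Let $M = \psi(R)$ for some $R \in \cR(\XX^{}_{\mathrm{L}})$. By Corollary~\ref{coro:MLD}, $\XX^{}_{\mathrm{L}}$ and $h^{}_{\nts M}(\XX^{}_{\mathrm{L}}) = \ker(1 + S^{M\bs e_1} + S^{M\bs e_2})$ are MLD, hence topologically conjugate as $\ZZ^2$-shifts. Now use expansive subdynamics: the non-expansive lines of $\XX^{}_{\mathrm{L}}$ are exactly the three ``edge directions'' $\langle\bs e_1\rangle$, $\langle\bs e_2\rangle$, $\langle\bs e_1-\bs e_2\rangle$ along which configurations propagate one-dimensionally (from the recursion $x^{}_{\bs n + \bs e_2} = x^{}_{\bs n} + x^{}_{\bs n+\bs e_1}$ and its two cyclic variants; cf.\ Fact~\ref{fact:L-tri}). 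A topological conjugacy between subshifts has bounded range and so preserves the non-expansive set verbatim; since that set for $h^{}_{\nts M}(\XX^{}_{\mathrm{L}})$ is $M\{\langle\bs e_1\rangle,\langle\bs e_2\rangle,\langle\bs e_1-\bs e_2\rangle\}$, the matrix $M$ must permute these three directions, and their stabiliser in $\GL(2,\ZZ)$ is the maximal finite subgroup $D_6 \simeq D_3 \times C_2$, the extra central generator being $-\mathrm{Id}$. To discard the coset $(-\mathrm{Id})D_3$, note that each edge line is only \emph{half}-expansive: for $\XX^{}_{\mathrm{L}}$ the closed half-plane $\{n \leqslant 0\}$ determines a configuration while $\{n \geqslant 0\}$ does not, the recursion propagating only in the $+\bs e_2$ direction; equivalently, the minimal $L$-triangles all share one orientation. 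This handedness is conjugacy-invariant, whereas for $M \in (-\mathrm{Id})D_3$ the shift $h^{}_{\nts M}(\XX^{}_{\mathrm{L}})$ carries the \emph{opposite} orientation and is therefore not conjugate to $\XX^{}_{\mathrm{L}}$. Hence $M \in D_3$, so $\psi(\cR(\XX^{}_{\mathrm{L}})) = D_3$, and the theorem follows.

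\emph{Main obstacle.} The crux is the third paragraph: one needs the complete expansive-subdynamics picture of $\XX^{}_{\mathrm{L}}$ --- not just that the three edge directions are non-expansive, but that there are no others, together with the oriented (half-expansiveness) refinement that collapses $D_6$ to $D_3$. This is the genuinely new ingredient, since the maximal equicontinuous factor of the mixing shift $\XX^{}_{\mathrm{L}}$ is trivial, so Theorem~\ref{thm:equi-general} and Corollary~\ref{coro:equi-mult} --- our main tools for the chair tiling --- give no information here. Mixing, irreducibility, and the exact combinatorics of $L$-triangles from Fact~\ref{fact:L-tri} are the resources one expects to deploy.
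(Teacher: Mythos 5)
Your first two steps line up with the paper's own proof. For $\cS(\XX^{}_{\mathrm{L}})=\ZZ^{2}$, the paper likewise first invokes Kitchens' rigidity result to reduce every symmetry to an affine map fixing $\nix$, and then quotes \cite[Sec.~31]{K-book} for the fact that the shift-commuting group automorphisms are exactly the shifts; your explicit computation of the unit group of $\mathbb{F}_{2}[u^{\pm1},(1+u)^{-1}]$ is a correct substitute for that citation. The identification of the six matrices stabilising the translation class of the elementary triangle, the verification that the corresponding $h^{}_{\nts M}$ preserve $\ker(1+S^{}_{1}+S^{}_{2})$, and the final appeal to Proposition~\ref{prop:remains} are all exactly as in the paper.

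The third step is where you genuinely diverge, and also where the gap sits. The paper does not use expansive subdynamics: it kills the order-six matrix $\bigl(\begin{smallmatrix}0&-1\\1&1\end{smallmatrix}\bigr)$ by an explicit half-plane argument (a horizontal line determines everything \emph{above} it in $\XX^{}_{\mathrm{L}}$ but everything \emph{below} it in $h^{}_{\nts M}(\XX^{}_{\mathrm{L}})$, which is incompatible with an invertible finite-radius derivation rule), and it kills all parabolic and hyperbolic matrices --- which by Fact~\ref{fact:torsion} is all that remains --- by a case analysis with the three determination sectors of Eq.~\eqref{eq:sector} and the lattice geometry of elementary triangles (congruence of parallel primitive sides, Pick's theorem). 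Your route, via conjugacy invariance of the non-expansive set forcing $M$ into the stabiliser $D_{6}$ of the three edge lines and a one-sided refinement removing the coset $(-\id)D_{3}$, is attractive because it disposes of the infinite-order and the stray finite-order cases in a single stroke. But, as you yourself flag, the assertion that the non-expansive lines of $\XX^{}_{\mathrm{L}}$ are \emph{exactly} $\langle \bs e_{1}\rangle,\langle \bs e_{2}\rangle,\langle \bs e_{1}-\bs e_{2}\rangle$, with their orientations, is stated rather than proved, and that is precisely where all the work is: the ``no others'' half of that claim is equivalent in content to the over/under-determination analysis occupying the second half of the paper's proof. The claim is true (Boyle and Lind compute the expansive subdynamics of Ledrappier's example), so your argument can be closed by citation; without that, nothing written here rules out a hyperbolic $M$. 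You would also need to record why \emph{half-space} expansiveness, not just line expansiveness, is a conjugacy invariant, which is what the step discarding $(-\id)D_{3}$ rests on --- standard, but not free.
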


Geometrically, as we shall see, any extended symmetry must map
elementary $L$-triangles to triangles of the same type. The
square lattice representation is thus a `red herring' in this case,
and a more natural representation would use the triangular lattice
instead. This means that the relevant maximal subgroup of
$\GL (2,\ZZ)$ for Ledrappier's shift is $D^{}_{6}$ rather than
$D^{}_{4}$; see Figure~\ref{fig:L} for an illustration.

\begin{rem}
  The cyclic groups $C^{}_3$ and $C^{}_2$
  are generated by the rotation\/
  $G^{}_{3} = \left( \begin{smallmatrix} -1 & -1 \\ 1 &
      0 \end{smallmatrix} \right) $ of order\/ $3$ and
  by the reflection\/ $R^{}_{12} = \left( \begin{smallmatrix} 0 & 1 \\
      1 & 0 \end{smallmatrix} \right)$
  in the space diagonal.  The induced conjugation action of\/
  $G^{}_{3}$ on\/ $\cS (\XX^{}_{\mathrm{L}})$ sends\/ $S^{}_{1}$
  and\/ $S^{}_{2}$ to\/ $S^{-1}_{1} S^{}_{2}$ and\/ $S^{-1}_{1}$,
  respectively.  $R^{}_{12}$ is the involution that interchanges the
  coordinates, and thus conjugates\/ $S^{}_{1}$ into $S^{}_{2}$ and
  vice versa.  \exend
\end{rem}

\begin{proof}[Proof of Theorem~$\ref{thm:L}$]
  {}From Kitchen's result \cite[Observ.~4.6.1]{Kit}, we know that
  every self-conjugacy of $(\XX^{}_{\mathrm{L}},\ZZ^2 )$ must be a
  group isomorphism of $\XX^{}_{\mathrm{L}}$ as an Abelian group (in
  fact, more is true, as shown in \cite[Thm.~1.1 and
  Sec.~4]{KS}). Now, \cite[Thm.~31.1 and Cor.~31.3]{K-book} imply that
  any group isomorphism of $\XX^{}_{\mathrm{L}}$ is a power of a
  shift.  Indeed, any symmetry $G$ has to be affine and has to fix
  $\nix$, the all-zero configuration (which is also the neutral
  element of $\XX^{}_{\mathrm{L}}$ as a group), where $G$ must be a
  group homomorphism.  The only possibility then is that
  $G = S^{\bs{n}} = \prod_{i} S^{n_{i}}_{i}$ for some
  $\bs{n} \in \ZZ^{2}$, as shown in \cite[Sec.~31]{K-book}.  In
  particular, the local condition \eqref{eq:def-L} is \emph{not}
  invariant under the exchange $0 \leftrightarrow \nts\nts 1$ in a
  configuration.

  Now, it is a straightforward to verify that the mapping $h^{}_{\nts M}$
  preserves the pattern of elementary $L$-triangles if and only if
  $M$ is one of the six matrices
\[
   \mbox{\small $
   \left\{   \begin{pmatrix} 1 & 0 \\ 0 & 1
   \end{pmatrix} ,
   \begin{pmatrix} 0 & 1 \\ 1 & 0
   \end{pmatrix} ,
   \begin{pmatrix} -1 & -1  \\ 0  & 1
   \end{pmatrix} ,
   \begin{pmatrix} 0 & 1 \\ -1 & -1
   \end{pmatrix} ,
   \begin{pmatrix} 1 & 0 \\ -1 & -1
   \end{pmatrix} ,
   \begin{pmatrix} -1 & -1 \\ 1 & 0
   \end{pmatrix} \right\} $}.
\]
They form a subgroup of $\GL(2,\ZZ)$ of order $6$, namely
$D_{3} = C_{3} \rtimes C_{2}$; see also Figure~\ref{fig:L}.  For each
such $M$, the mapping $h^{}_{\nts M} = \varphi (M)$ is an element of
$\cR (\XX^{}_{\mathrm{L}})$ with $\psi (h^{}_{\nts M}) = M$, so we see
that the group $\ZZ^{2} \rtimes D_{3}$ is certainly a subgroup of
$\cR (\XX^{}_{\mathrm{L}})$.

Let us next show that $D_{6}$, the unique extension of $D_{3}$ to a
maximal finite subgroup of $\GL (2, \ZZ)$, is \emph{not} contained in
$\psi (\cR (\XX^{}_{\mathrm{L}}))$. Since
$\left( \begin{smallmatrix} 0 & 1 \\ -1 & -1 \end{smallmatrix}
\right)$
has precisely two roots in $\GL (2,\ZZ)$, namely $\pm M$ with
$M= \left( \begin{smallmatrix} 0 & -1 \\ 1 & 1 \end{smallmatrix}
\right)$,
it suffices to exclude $M$. Thus, consider
$\YY := h^{}_{\nts M} (\XX^{}_{\mathrm{L}})$,
which is the shift space defined by
\[
     y^{}_{(m,n)} + y^{}_{(m+1,n)} + 
     y^{}_{(m+1,n-1)} \, \equiv \, 0 \, \bmod{2}
\]
for all $m,n\in\ZZ$, or by
$\YY = \mathrm{ker} (1 + S^{}_{1} + S^{}_{1} S^{-1}_{2})$. The two
shift spaces differ as follows.  In $\XX^{}_{\mathrm{L}}$, when we
know $\{ x^{}_{(k,\ell)} \mid k\in\ZZ \}$ for any (fixed)
$\ell\in\ZZ$, all $x^{}_{(m,\ell+n)}$ with $m\in\ZZ$ and $n\in\NN$ are
uniquely determined.  In contrast, in $\YY$, the knowledge of
$\{ y^{}_{(k,\ell)} \mid k\in\ZZ \}$ determines all
$y^{}_{(m,\ell-n)}$ with $m\in\ZZ$ and $n\in\NN$. In other words, the
knowledge of a configuration along a horizontal line determines
everything above (in $\XX^{}_{\mathrm{L}}$) or below this line (in
$\YY$).

Let us now assume that an invertible local derivation rule
$H \! : \, \YY \xrightarrow{\quad} \XX^{}_{\mathrm{L}}$
exists. Running it along a fixed horizontal line (for $y\in\YY$)
specifies the image (in $\XX^{}_{\mathrm{L}}$) along this line, and
thus (implicitly) also the configuration in the entire half plane
above it. Due to the finite radius ($r$ say) of $H$, this
configuration would be the same for all $y\in\YY$ with the same
configuration in a strip of width $2r$ around the chosen horizontal
line.  Moreover, the values of $x$ below the chosen horizontal line
are effectively determined by the values of $y$ in this strip of
finite width --- irrespective of the values of $y$ at places above the
strip. Since there are certainly distinct elements in $\YY$ that agree
in this strip, we see that $H$ cannot be invertible and a local
derivation rule at the same time, in contradiction to our assumption.

Next, if $\psi (\cR (\XX^{}_{\mathrm{L}}))$ contains any element
beyond $D_{3}$, we must also have at least one element of infinite
order, by an application of Fact~\ref{fact:torsion}.  As in our
previous arguments, such an element must be parabolic or hyperbolic. A
refined version of our above argument will exclude such
elements. Here, one can argue with a \emph{sector} rather than with a
half plane. Corresponding to the three fundamental directions
defined by the sides of the elementary $L$-triangles, there are three
types of index sectors.  If the tip is in the origin, one has
\begin{equation}\label{eq:sector}
      \{ {(m,n)} \mid m,n \geqslant 0 \} \ts , \,
      \{ {(m,-n)} \mid n \geqslant m \geqslant 0 \}  \ts , 
      \, \text{and }
      \{ {(-m,n)} \mid m\geqslant n \geqslant 0 \} \ts ,
\end{equation}
each with the property that the configuration inside the sector is 
completely determined by the configuration on one
of its boundary lines.

Now, for any $M\in\GL(2,\ZZ)$, the shift space 
$ \YY \, = \, h^{}_{\nts M}  (\XX^{}_{\mathrm{L}})$ satisfies 
\[
      \YY \, = \,  \mathrm{ker}
    \bigl(1 + S^{m^{}_{11}}_{1} S^{m^{}_{21}}_{2} + 
    S^{m^{}_{12}}_{1} S^{m^{}_{22}}_{2} \bigr) \ts ,
\]    
as one can derive from the conjugation action on the shift
generators. When $M$ is not of finite order, one has
$\YY \ne \XX^{}_{\mathrm{L}}$, because the elementary triangles for
$\YY$ are then different from the $L$-triangles. Indeed, they are
given by the lattice translates of the triangle with vertices
$\{ (0,0), (m^{}_{11}, m^{}_{21}), (m^{}_{12}, m^{}_{22}) \}$ and thus
are all of the same shape.  Like the elementary $L$-triangles, they have
area $\frac{1}{2}$.  Note that no lattice point other than the three
vertices can lie on any of the sides of an elementary $\YY$-triangle,
and no lattice point can lie in its interior (one way to see this employs
Pick's formula for the area of a lattice triangle).

If one side of an elementary $\YY$-triangle is parallel to a side of
an elementary $L$-triangle, these sides must be congruent because
there cannot be a lattice point in the interior of this side. Now, we
can either repeat our previous argument for a half space (if the two
triangles extend to opposite directions, as above) or we can resort to
one of the three sectors from Eq.~\eqref{eq:sector}. In the latter
case, since the two elementary triangles cannot be congruent, the
selected sector for $\YY$ will differ from that for
$\XX^{}_{\mathrm{L}}$, but they share one boundary line.  If it is
smaller (larger), due to the assumed local nature of the derivation
rule $H$, we find ourselves in the situation that the image
configuration is underdetermined (overdetermined), in contradiction to
the invertibility of $H$. We leave the details of this argument to the
reader.

It remains to consider the case that no side of an elementary
$\YY$-triangle is parallel to that of an elementary $L$-triangle.  If
a $\YY$-sector overlaps with an $\XX^{}_{\mathrm{L}}$-sector, one
boundary line of each sector will (except for $(0,0)$) lie in the
interior of the other, and we may argue once more that our
hypothetical local derivation rule $H$ is overdetermined and thus
cannot be invertible. In the remaining case, the $\YY$-sectors (except
for the point $(0,0)$ again) lie entirely inside the complement of the
sectors from Eq.~\eqref{eq:sector}. In this case, the values of
$y\in\YY$ in the positive quadrant are \emph{not} determined from the
values on $\{ (m,0)\mid m\geqslant 0 \}$, while the values of $x = Hy$
must be.  If $H$ has finite radius and is invertible, this is
impossible.

Put together, this rules out any additional element beyond $D_{3}$ in
$\psi (\cR (\XX^{}_{\mathrm{L}}))$, wherefore we are in the situation
of Proposition~\ref{prop:remains}, which proves our claim.
\end{proof}

It is clear that there are many more examples with small symmetry
groups and interesting extended symmetry groups. Also, for
$d\geqslant 2$, one can have the situation that
$\cS(\XX) \simeq \ZZ^{d}$ is paired with $\cR(\XX) = \cS (\XX)$ or
with $\cR(\XX) \simeq \cS(\XX) \rtimes \GL (d, \ZZ)$, as will be shown
in a forthcoming publication. It remains an interesting open question
to what extent the analysis of extended symmetries can contribute to
some partial structuring of the class of $\ZZ^{d}$-shifts.

\section*{Acknowledgements}

It is a pleasure to thank David Damanik, Franz G\"{a}hler, Dawid
Kielak, Mariusz Lema\'{n}czyk, Klaus Schmidt, Anthony Quas and Luca
Zamboni for useful discussions.  We thank an anonymous reviewer for a
number of insightful suggestions that helped to improve the
manuscript.  This work was supported by the Australian Research
Council (ARC) and also by the German Research Foundation (DFG), within
the CRC 701. MB would like to thank the Erwin Schr\"{o}dinger
Institute (ESI) in Vienna for hospitality, where this manuscript was
completed.

\end{document}